\newtheorem{thm}{Theorem}[section]
\newtheorem{cor}[thm]{Corollary}
\newtheorem{prop}[thm]{Proposition}
\newtheorem{lem}[thm]{Lemma}
\theoremstyle{definition}
\newtheorem{defn}[thm]{Definition}
\newtheorem{exmp}[thm]{Example}
\newtheorem{rmk}[thm]{Remark}
\newtheorem{conj}[thm]{Conjecture}
\DeclareMathOperator{\Sym}{Sym}
\DeclareMathOperator{\Hom}{Hom}
\DeclareMathOperator{\im}{im}
\DeclareMathOperator{\sgn}{sgn}
\DeclareMathOperator{\Bir}{Bir}
\DeclareMathOperator{\Var}{Var}
\DeclareMathOperator{\Vol}{Vol}
\DeclareMathOperator{\MHM}{MHM}
\DeclareMathOperator{\codim}{codim}
\DeclareMathOperator{\tr}{tr}
\DeclareMathOperator{\Bl}{Bl}
\DeclareMathOperator{\spr}{sp}
\DeclareMathOperator{\SB}{SB}
\DeclareMathOperator{\DM}{DM}
\DeclareMathOperator{\Stk}{Stk}
\DeclareMathOperator{\Symm}{Symm}
\DeclareMathOperator{\Hilb}{Hilb}
\DeclareMathOperator{\id}{id}
\DeclareMathOperator{\reg}{reg}
\DeclareMathOperator{\red}{red}
\DeclareMathOperator{\bit}{bit}
\DeclareMathOperator{\Spc}{Spc}
\DeclareMathOperator{\NE}{NE}
\DeclareMathOperator{\cont}{cont}
\DeclareMathOperator{\MHS}{MHS}
\newcommand{\Kdim}{K_0(\Var_k^{\dim})}
\newcommand{\KdimS}{K_0(\Var_S^{\dim})}
\newcommand{\KsprS}{K_0(\Var_S^{\spr})}
\newcommand{\Kspr}{K_0(\Var_k^{\spr})}
\DeclareMathOperator{\Kstk}{K_0(\Stk_k^{\dim})}
\DeclareMathOperator{\KstkS}{K_0(\Stk_S^{\dim})}
\DeclareMathOperator{\Kdm}{K_0(\DM_k^{\dim})}
\DeclareMathOperator{\KdmS}{K_0(\DM_S^{\dim})}
\newcommand{\KdimX}{K_0(\Var_X^{\dim})}
\newcommand{\KsprX}{K_0(\Var_X^{\spr})}
\let\c@equation\c@thm
\numberwithin{equation}{section}
\title[Involution on $\Kdim$ and $\mathbb{D}$-Singularities]{Involution on the Graded Grothendieck Ring of Varieties and $\mathbb{D}$-Singularities}
\author{Andrew Burke}
\address{Department of Mathematics, Harvard University, 1 Oxford Street, Cambridge, MA 02138, USA}
\email{aburke@math.harvard.edu}
\begin{document}

\begin{abstract}
We realize a graded variant $\Kdim$ of the Grothendieck ring of varieties as a quadratic extension of the subring $\Kspr$ spanned by classes of smooth and proper varieties. As such, there exists a natural involution $\mathbb{D}$ on $\Kdim$. We show that $\mathbb{D}$ commutes with the symmetric power operations $\Sym^m$ up to zero divisors. Moreover, we study varieties which are smooth up to cut-and-paste relations, which we call $\mathbb{D}$-singular varieties, and we give applications to compactifications of varieties and the irrationality of Kapranov zeta functions.
\end{abstract}

\maketitle

\setcounter{tocdepth}{1}

\section{Introduction}

Let $k$ be a field, assumed to be of characteristic zero. The Grothendieck ring of varieties $K_0(\Var_k)$ is the abelian group generated by classes $[X]$ of varieties $X$ over $k$, modulo the cut-and-paste relations
\begin{align*}
    [X] = [Y] + [X-Y]
\end{align*}
for each closed subvariety $Y$ of $X$. It is a ring, with product $[X] \cdot [Y] = [X \times Y]$.

This ring was first mentioned by Grothendieck in a $1964$ letter to Serre in relation to motives. It features prominently in the theory of motivic integration, introduced by Kontsevich in $1995$ and developed by Denef and Loeser. Here, integrals are valued in a completed localization of $K_0(\Var_k)$.

Despite its simple definition, the Grothendieck ring of varieties $K_0(\Var_k)$ remains largely mysterious. It is a complicated ring with many zero divisors; see \cite{poonendomain}, \cite{kollar conic}, \cite{borisovzerodiv}, \cite{hassettlai}. Nevertheless, these zero divisors seem to witness important geometric information; see \cite{kuznetsovshinder}, \cite{linshinder}.

The Grothendieck ring was connected to birational geometry in \cite{larsenluntsstable} via an isomorphism $K_0(\Var_k)/([\mathbb{A}^1_k]) \cong \mathbb{Z}[\SB_k]$, where $\SB_k$ is the monoid of stable birational equivalence classes. This isomorphism is exploited in \cite{stablebirational} to prove that stable rationality specializes in smooth families. A modified argument in \cite{kontsevichtschinkel} removes the adjective `stable.' The two perspectives are unified in \cite{K^dim} by considering a Grothendieck ring of varieties $\Kdim$, graded by dimension, which can detect birational type.

In this paper, we study $\Kdim$ as an object worthy of independent interest. Explicitly, it is the graded abelian group generated in degree $d$ by classes $[X]_d$ of varieties $X$ of dimension at most $d$, modulo the cut-and-paste relations
\begin{align*}
    [X]_d = [Y]_d + [X-Y]_d,
\end{align*}
for each closed subvariety $Y$ of $X$. It is a graded ring, with $[X]_d \cdot [Y]_e = [X \times Y]_{d+e}$.

There is a graded subring $\Kspr$ of $\Kdim$ spanned by the classes $[X]_n$ of smooth and proper varieties $X$ of dimension $n$. Note that $\Kspr$ is a proper subring, as, for instance, it does not contain the classes $[\mathop{\mathrm{Spec}}k]_1$ or $[\mathbb{A}^1]_1$. Nevertheless, these two classes are solutions of the integral quadratic equation
\begin{align*}
    \big( \hspace{.1cm} x^2 - [\mathbb{P}^1]_1 \cdot x + \big( [ \mathbb{P}^1 \times \mathbb{P}^1 ]_2 - [\mathbb{P}^2]_2 \big) \hspace{.1cm} \big) \in \Kspr[x].
\end{align*}
Our main structural result says that $\Kdim$ is a quadratic extension of $\Kspr$.

\begin{thm}\label{intro str iso}
There is an isomorphism of graded rings
\begin{align*}
    \Kdim \cong \frac{\Kspr \big[ \tau, \mathbb{L} \big] }{ \big( \hspace{.1cm} \tau + \mathbb{L} -  [\mathbb{P}^1]_1, \hspace{.2cm} \tau \cdot \mathbb{L} -  \big( [ \mathbb{P}^1 \times \mathbb{P}^1 ]_2 - [\mathbb{P}^2]_2 \big) \hspace{.1cm} \big)}.
\end{align*}
We view $\tau$ and $\mathbb{L}$ as indeterminates of degree one on the right side; they map to $[\mathop{\mathrm{Spec}}k]_1$ and $[\mathbb{A}^1]_1$, respectively.
\end{thm}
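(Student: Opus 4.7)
My approach is to exhibit the displayed ring as a presentation of $\Kdim$, by constructing a homomorphism $\phi$ from the right-hand side to $\Kdim$, proving surjectivity geometrically, and then building an inverse from a Bittner-type presentation of $\Kdim$.

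To define $\phi$, send $\tau \mapsto [\mathrm{Spec}\,k]_1$ and $\mathbb{L} \mapsto [\mathbb{A}^1]_1$. The cut-and-paste decomposition $\mathbb{P}^1 = \mathrm{Spec}\,k \sqcup \mathbb{A}^1$ yields the linear relation, and stratifying $\mathbb{P}^2$ into affine cells gives $[\mathbb{P}^2]_2 = \tau^2 + \tau\mathbb{L} + \mathbb{L}^2$, so that $[\mathbb{P}^1\times\mathbb{P}^1]_2 - [\mathbb{P}^2]_2 = (\tau+\mathbb{L})^2 - (\tau^2 + \tau\mathbb{L} + \mathbb{L}^2) = \tau\mathbb{L}$, as required. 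Since $\tau$ is a root of a monic quadratic over $\Kspr$, the presented ring $R$ is free of rank two as a $\Kspr$-module with basis $\{1,\tau\}$. For surjectivity I would induct on the grade $d$: a class $[X]_d$ with $\dim X < d$ factors as $\tau \cdot [X]_{d-1}$ and lies in $\phi(R)$ by induction; for $\dim X = d$, characteristic-zero resolution of singularities, Nagata compactification, and a further resolution produce a smooth proper $\overline{X}$ of dimension $d$ whose class lies in $\Kspr$, with $[X]_d - [\overline{X}]_d$ supported in dimension $<d$ and hence in $\phi(R)$ by induction.

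Injectivity is the main obstacle. I would construct an inverse $\psi \colon \Kdim \to R$ using a Bittner-style presentation of $\Kdim$ by classes of smooth proper varieties subject to blow-up relations---such a presentation is either contained in, or readily adaptable from, the referenced work on $K_0(\Var_k^{\dim})$. On generators, set $\psi([X]_d) = [X]_{\dim X} \cdot \tau^{d - \dim X}$. The essential verification is that $\psi$ respects the blow-up relation: for $Y \subset X$ smooth of codimension $c$ with exceptional divisor $E$, combining the projective bundle formula $[E]_{d-1} = [Y]_{\dim Y} \cdot [\mathbb{P}^{c-1}]_{c-1}$ with the closed form $[\mathbb{P}^m]_m = \sum_{i=0}^m \tau^{m-i}\mathbb{L}^i$ in $R$ should, after a telescoping calculation inducting on $c$, reduce the relation to an identity modulo the two generators of the ideal. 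Once $\psi$ is well defined, $\psi \circ \phi = \mathrm{id}$ on the generators $\tau,\mathbb{L}$ and on $\Kspr$, giving injectivity. The blow-up verification is where the specific form of the quadratic enters essentially, and where the bulk of the computational work lies.
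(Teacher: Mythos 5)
Your plan is correct and follows essentially the same route as the paper: both proofs hinge on the Bittner presentation of $\Kdim$ (Theorem \ref{bittner presentation}) and on rewriting the blow-up relation using the projective-bundle formula $[E]=[\mathbb{P}^{c-1}]\cdot[Y]$ together with the closed form for $[\mathbb{P}^m]$ in terms of $\tau+\mathbb{L}$ and $\tau\mathbb{L}$. The ``telescoping calculation'' you defer is exactly the content of Corollary \ref{bit module}, where the paper checks that the blow-up relation splits (in the basis $\{1,\tau\}$) into the two identities $[E]=[\mathbb{P}^{c-1}][Y]$ and $[\Bl_Y X]=[X]+\tau\mathbb{L}\cdot[\mathbb{P}^{c-2}][Y]$, both of which hold in $\Kspr$ because $\Kspr$ is a subring of $\Kdim$.
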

Put more simply, as an abelian group, we have
\begin{align*}
    \Kdim \cong \Kspr \oplus \tau \cdot \Kspr.
\end{align*}
Moreover, there exists an involution
\begin{align*}
    \mathbb{D}: \Kdim \rightarrow \Kdim,
\end{align*}
which fixes $\Kspr$ pointwise and interchanges $\tau = [\mathop{\mathrm{Spec}}k]_1$ and $\mathbb{L} = [\mathbb{A}^1]_1$, as well as projections $\pi_1, \pi_2: \Kdim \rightarrow \Kspr$ according to the above direct sum decomposition.

Though our formalism only works in the graded setting, one finds reflections of it on $K_0(\Var_k)$. For example, Bittner's involution on $K_0(\Var_k)[\mathbb{L}^{-1}]$ (see \cite{bittnerrelations}) is an ungraded manifestation of $\mathbb{D}$. Our setup is more flexible because $\mathbb{D}$ fixes the classes of smooth and proper varieties, and it does not require the inversion of $\mathbb{L}$, thereby losing birational data. Our formalism also yields simple proofs of the main structural results of \cite{larsenluntsstable} and \cite{zakharevich} on $K_0(\Var_k)$ (see Subsection \ref{motivic measures}).

For the remainder of the introduction, we discuss three main ways we use the involution $\mathbb{D}$ and projections $\pi_1, \pi_2$ defined via Theorem \ref{intro str iso}.

\subsection{Symmetric Powers}
The graded Grothendieck ring of varieties $\Kdim$ is equipped with symmetric power operations
\begin{align*}
    \Sym^m: \Kdim \rightarrow \Kdim, \hspace{.5cm} [X]_d \mapsto [\Sym^m X]_{md},
\end{align*}
for each $m \geq 0$, where $\Sym^m X = X^m / S_m$.

By Proposition \ref{K0 lambda defn}, the classes $\tau$ and $\mathbb{L}$ behave well with respect to $\Sym^m$: $\Sym^m(\tau \alpha) = \tau^m \Sym^m(\alpha)$ and $\Sym^m(\mathbb{L} \alpha) = \mathbb{L}^m \Sym^m(\alpha)$ for $\alpha \in \Kdim$. Motivated by the fact that $\mathbb{D}$ interchanges $\tau$ and $\mathbb{L}$, we conjecture that symmetric powers commute with involution.

\begin{conj}\label{Sym commute D?}
$\Sym^m \circ \mathbb{D} = \mathbb{D} \circ \Sym^m$ for each $m \geq 0$
\end{conj}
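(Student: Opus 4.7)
The plan is to use the pre-$\lambda$-ring structure on $\Kdim$, combined with Theorem \ref{intro str iso}, to reduce the conjecture to the special case of classes of smooth and proper varieties. The fact that $\Sym^m$ respects cut-and-paste decompositions makes $\Kdim$ into a pre-$\lambda$-ring for $\sigma_t(x) := \sum_{m \geq 0} \Sym^m(x)\, t^m$, in the sense that $\sigma_t(x+y) = \sigma_t(x)\sigma_t(y)$ in $\Kdim[[t]]$. The conjecture, taken over all $m$, is then equivalent to the generating-function identity $\mathbb{D}(\sigma_t(x)) = \sigma_t(\mathbb{D}(x))$ in $\Kdim[[t]]$, where $\mathbb{D}$ is extended to act on coefficients.

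Writing $x = \alpha + \tau\beta$ with $\alpha, \beta \in \Kspr$ via Theorem \ref{intro str iso}, and using $\Sym^m(\tau\beta) = \tau^m \Sym^m(\beta)$ from Proposition \ref{K0 lambda defn}, one computes
\begin{align*}
\sigma_t(x) &= \sigma_t(\alpha) \cdot \sum_{m \geq 0} \Sym^m(\beta)\, \tau^m t^m, \\
\sigma_t(\mathbb{D}(x)) &= \sigma_t(\alpha) \cdot \sum_{m \geq 0} \Sym^m(\beta)\, \mathbb{L}^m t^m.
\end{align*}
Applying $\mathbb{D}$ to the first equality and matching against the second reduces the conjecture to the assertion that $\mathbb{D}(\Sym^m(\gamma)) = \Sym^m(\gamma)$ for every $\gamma \in \Kspr$ and every $m \geq 0$. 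A further reduction using multiplicativity of $\sigma_t$ on sums (together with $\sigma_t(-\gamma) = \sigma_t(\gamma)^{-1}$) brings $\gamma$ to a monomial in generators $[X_i]_{n_i}$ of $\Kspr$, which is itself of the form $[X]_n$ with $X = \prod X_i$ smooth and proper. So the conjecture is equivalent to the statement
\begin{align*}
\mathbb{D}([\Sym^m X]_{mn}) = [\Sym^m X]_{mn}
\end{align*}
for every smooth and proper $X$ of dimension $n$ and every $m \geq 0$.

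The main obstacle is precisely this last step. When $n = 1$, $\Sym^m X$ is itself smooth and proper, so $[\Sym^m X]_m \in \Kspr$ and is $\mathbb{D}$-fixed trivially. For $n \geq 2$, however, $\Sym^m X$ is singular, and one must write $[\Sym^m X]_{mn} = A + \tau B$ in the decomposition of Theorem \ref{intro str iso} and verify that $(\mathbb{L} - \tau) B = 0$ in $\Kspr$. For $n = 2$ one can hope to exploit the smooth proper Hilbert--Chow resolution $\Hilb^m X \to \Sym^m X$, stratifying along partition type and analyzing each punctual Hilbert fiber to compute both $A$ and $B$ explicitly. In higher dimensions no such uniform smooth model is available, and one would presumably need to establish that $\Sym^m X$ is $\mathbb{D}$-singular in the sense of the later sections of the paper. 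I expect this is exactly the obstruction that forces the paper to settle for the weaker ``up to zero divisors'' version in the abstract: the full conjecture appears to demand genuinely new resolution-theoretic input specific to symmetric products, whereas after multiplying by $\mathbb{L} - \tau$ or a similar annihilator, the contribution of the singular strata becomes tractable.
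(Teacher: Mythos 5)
This is a Conjecture --- the paper poses it without proof, and it remains open --- so there is no paper proof to compare your proposal against, and indeed you do not claim a proof: you give a reduction and correctly identify the residual obstruction. The reduction is sound and mirrors the paper's own informal remark immediately after the conjecture. Using the pre-$\lambda$ identity $\sigma_t(x+y)=\sigma_t(x)\sigma_t(y)$, the decomposition $x=\alpha+\tau\beta$ with $\alpha,\beta\in\Kspr$ from Theorem \ref{intro str iso}, the scaling $\Sym^m(\tau\beta)=\tau^m\Sym^m\beta$ from Proposition \ref{K0 lambda defn}, and the fact that $\mathbb{D}$ is a ring involution fixing $\Kspr$ pointwise and interchanging $\tau$ with $\mathbb{L}$, the conjecture is indeed equivalent to $\mathbb{D}[\Sym^m X]=[\Sym^m X]$ for every smooth proper $X$ (take $\beta=0$ for one direction; for the other, $\mathbb{D}$ is multiplicative so it suffices to treat the additive generators $[X]$ of $\Kspr$). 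The known cases (curves trivially, surfaces via G\"ottsche) and the link to $\mathbb{D}$-singularity of $\Sym^m X$, formalized as Conjecture \ref{is sym D-sing?}, are all correctly flagged.

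Two small imprecisions worth noting. First, writing $[\Sym^m X]=A+\tau B$, the desired identity $(\mathbb{L}-\tau)B=0$ is an equation in $\Kdim$, not in $\Kspr$; applying $\pi_1$ and $\pi_2$ shows it amounts to $(\mathbb{L}+\tau)B=0$ together with $2B=0$, which is a priori weaker than the condition $B=\pi_2[\Sym^m X]=0$ that the refined Conjecture \ref{is sym D-sing?} predicts. Second, the unconditional Theorem \ref{Sym commute D torsion} does not proceed by multiplying by $\mathbb{L}-\tau$: it passes to the localization $T^{-1}\Kdim\cong U^{-1}\Kstk$, where the class $\{\Sym^m X\}$ becomes identified with that of the smooth proper Deligne--Mumford stack $[X^m/S_m]$ (Remark \ref{sym = symm}, using Ekedahl's $\{BS_m\}=1$), hence $\mathbb{D}$-fixed by Corollary \ref{DM in spr}. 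That stack-theoretic mechanism, rather than a single annihilator, is precisely the new resolution-theoretic input specific to symmetric products that you anticipated being needed.
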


 One reduces to checking whether $\mathbb{D}[\Sym^m X] = [\Sym^m X]$ for smooth and proper varieties $X$. This is really a question about the singularities of symmetric powers $\Sym^m X$, of which we ask a more refined version in Conjecture \ref{is sym D-sing?}. It is clearly true for curves and holds for surfaces by the formula of \cite{gottschehilbert}.

We have the following partial answer to Conjecture \ref{Sym commute D?}.

\begin{thm}\label{Sym commute D torsion}
The operations $\Sym^m$ and $\mathbb{D}$ commute, up to zero divisors. More precisely, the image of $\Sym^m \circ \mathbb{D} - \mathbb{D} \circ \Sym^m$ lies in
\begin{align*}
    \ker \bigg( \Kdim \rightarrow \Kdim \big[ (\tau \mathbb{L})^{-1}, (\mathbb{L}^n-\tau^n)^{-1}_{n \geq 1} \big] \bigg).
\end{align*}
\end{thm}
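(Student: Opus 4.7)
The plan is to reduce, via Theorem~\ref{intro str iso} and the $\lambda$-ring structure, to showing $\mathbb{D}[\Sym^m X]_{md} = [\Sym^m X]_{md}$ in the localization for every smooth and proper variety $X$. Writing an arbitrary class as $\alpha = a + \tau b$ with $a, b \in \Kspr$ and using Proposition~\ref{K0 lambda defn} along with the identities $\Sym^m(\tau c) = \tau^m \Sym^m(c)$ and $\Sym^m(\mathbb{L} c) = \mathbb{L}^m \Sym^m(c)$, I expand
\begin{align*}
\Sym^m\alpha \;=\; \sum_{i+j=m} \Sym^i(a)\,\tau^j\,\Sym^j(b), \qquad \Sym^m\mathbb{D}\alpha \;=\; \sum_{i+j=m} \Sym^i(a)\,\mathbb{L}^j\,\Sym^j(b).
\end{align*}
Since $\mathbb{D}$ is a ring homomorphism that swaps $\tau$ and $\mathbb{L}$, the difference $\mathbb{D}\Sym^m\alpha - \Sym^m\mathbb{D}\alpha$ decomposes as a $\mathbb{Z}$-linear combination of products, each carrying a factor of the form $\mathbb{D}\Sym^k(c) - \Sym^k(c)$ with $c \in \Kspr$. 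The kernel of the localization is an ideal, so it suffices to place each such factor there. The multiplicativity $\sigma_t(c_1 + c_2) = \sigma_t(c_1)\sigma_t(c_2)$ and formal inversion of generating series express $\Sym^k(c)$ as a polynomial in the $\Sym^j([X_i])$'s whenever $c = \sum n_i [X_i]_{\dim X_i}$, which reduces us to $c = [X]_{\dim X}$ with $X$ smooth and proper.

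For the main step I would realize $[\Sym^m X]_{md}$ in the localization as a combination of manifestly $\mathbb{D}$-invariant classes, ideally of smooth and proper varieties. The case $m = 2$ requires no localization at all: the symmetrized blowup $\Bl_\Delta(X^2)/S_2$ is smooth and proper (the $S_2$-fixed locus is the exceptional divisor, a $\mathbb{P}^{d-1}$-bundle over $X$), and tracking the cut-and-paste contributions yields
\begin{align*}
[\Sym^2 X]_{2d} \;=\; [\Bl_\Delta(X^2)/S_2]_{2d} \;-\; [X]_d \cdot \tau\mathbb{L} \cdot [\mathbb{P}^{d-2}]_{d-2},
\end{align*}
which is visibly $\mathbb{D}$-invariant. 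For general $m$ the plan is to iterate this type of construction along the stratification $\Sym^m X = \bigsqcup_{\lambda \vdash m} X^{(\lambda)}$ by $S_m$-orbit types, resolving each singular stratum by a symmetric wonderful compactification of the relevant configuration space. The correction terms are projective bundles whose classes are $[\mathbb{P}^{n-1}]_{n-1} = (\mathbb{L}^n - \tau^n)/(\mathbb{L}-\tau)$, explaining precisely why inverting $\mathbb{L}^n - \tau^n$ and $\tau\mathbb{L}$ suffices to organize everything into $\mathbb{D}$-invariant pieces.

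The hard part is the combinatorial control for $m \geq 3$ and $\dim X \geq 3$, where no single smooth global resolution like a Hilbert scheme of points is available. Organizing the stratum-by-stratum contributions so that the non-$\mathbb{D}$-invariant remainders cancel modulo the ideal generated by $\tau\mathbb{L}$ and $\{\mathbb{L}^n - \tau^n\}_{n \geq 1}$ --- and confirming that no further denominators are required --- is the principal technical difficulty.
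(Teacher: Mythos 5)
Your reduction steps are sound, and your explicit $m=2$ computation is correct: $\Bl_\Delta(X^2)/S_2$ is indeed smooth proper, the exceptional divisor is the $S_2$-fixed locus, and the identity
\begin{align*}
[\Sym^2 X]_{2d} \;=\; [\Bl_\Delta(X^2)/S_2]_{2d} \;-\; \tau\mathbb{L}\cdot[X]_d \cdot [\mathbb{P}^{d-2}]_{d-2}
\end{align*}
checks out, exhibiting $[\Sym^2 X]_{2d}$ as $\mathbb{D}$-invariant without any localization. But the proposal has a genuine gap: the case $m \geq 3$, which you correctly flag as ``the principal technical difficulty,'' is only a plan, not a proof. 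You would need to specify a concrete resolution of each stratum $X^{(\lambda)}$, track all blowup/correction terms, and show they assemble into $\mathbb{D}$-invariant pieces modulo the stated multiplicative set --- none of which is carried out, and it is not clear that the symmetric wonderful compactification bookkeeping closes up cleanly in higher dimension. As written this is not a complete argument.

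The paper sidesteps the combinatorics entirely by passing to the graded Grothendieck ring of stacks. The point is that $\Kstk$ is exactly the localization of $\Kdim$ at the classes $[GL_n]$ (Proposition~\ref{localization}), and after further inverting $\tau$ one lands in $T^{-1}\Kdim$, which is the quadratic extension $T^{-1}\Kspr \otimes_{\mathbb{Z}[\tau+\mathbb{L},\tau\mathbb{L}]}\mathbb{Z}[\tau,\mathbb{L}]$ to which $\mathbb{D}$ extends. Two ingredients then replace your stratum-by-stratum resolution: Ekedahl's theorem $\{BS_m\}=1$ in $\Kstk$ (Theorem~\ref{B S_m}), which gives $\{\Sym^m X\} = \{[X^m/S_m]\}$ in $\Kstk$ (Proposition~\ref{coincide}); and Corollary~\ref{DM in spr}, which says that the class of any smooth proper Deligne--Mumford stack --- in particular $[X^m/S_m]$ --- lies in $T^{-1}\Kspr$, hence is fixed by $\mathbb{D}$. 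In effect, the stack quotient $[X^m/S_m]$ plays the role of the universal smooth resolution whose existence your approach would have to engineer by hand. Combined with the observation (via Remark~\ref{lambda extn rmk}) that both $\lambda_t$ and $\mathbb{D}\lambda_t\mathbb{D}$ are $T^{-1}\mathbb{Z}[\tau,\mathbb{L}]$-linear, it suffices to check on $\{X\}$ with $X$ smooth proper, and the proof closes. If you want to salvage your approach, the honest route is to reproduce Ekedahl's argument that $\{BS_m\}=1$; the wonderful compactification sketch as stated does not obviously do this work.
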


Our proof identifies the localization $\Kdim \big[ \mathbb{L}^{-1}, (\mathbb{L}^n-\tau^n)^{-1}_{n \geq 1} \big]$ with a suitable graded Grothendieck ring of stacks $\Kstk$. Following arguments of Ekedahl \cite{ekedahlstacks}, \cite{ekedahlgroup}, we show the class of $\Sym^m X$ is equal to the class of the stack quotient $[X^m/S_m]$ in $\Kstk$. We conclude by showing the class of any smooth and proper Deligne-Mumford stack is fixed by an extension of the involution $\mathbb{D}$ to $\Kstk[\tau^{-1}]$. 

The generating function for the classes of symmetric powers of a variety $X$ of dimension $n$ is called the Kapranov zeta function of $X$:
\begin{align*}
    Z_X^{\dim}(t) = \sum_{m \geq 0} [\Sym^m X]_{mn} \cdot t^m \in \Kdim[[t]].
\end{align*}
If $k$ is finite, then $Z_X^{\dim}(t)$ specializes under the point counting measure to the zeta function of $X$; this is a rational function by a theorem of Dwork. It is natural to ask whether $Z_X^{\dim}(t)$ is also rational. Dwork's argument shows $Z_X^{\dim}(t)$ is rational when $X$ is a curve, but \cite{larsenluntsrationality} shows when $X$ is a surface that $Z_X^{\dim}(t)$ is rational if and only if $\kappa(X) = -\infty$. We extend one implication to arbitrary dimension.

\begin{thm}\label{intro Kapranov}
Let $X$ be a smooth and projective variety of dimension $>1$ and non-negative Kodaira dimension $\kappa(X) \geq 0$. Then the graded Kapranov zeta function $Z_X^{\dim}(t) \in K_0(\Var_k^{\dim})[[t]]$ of $X$ is (pointwise) irrational. 

In fact, the ungraded Kapranov zeta function $Z_X(t) \in K_0(\Var_k)[[t]]$ of $X$ is also (pointwise) irrational, if we assume a positive answer to Conjecture \ref{Sym commute D?}.
\end{thm}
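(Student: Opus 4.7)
The strategy is to reduce rationality of $Z_X^{\dim}(t)$ to a rigid birational statement about resolutions $\widetilde{\Sym^m X}$ of the symmetric powers, which we then contradict using the positivity of the Kodaira dimension. First, using Theorem \ref{intro str iso} together with weak factorization of birational maps, I would construct a ring homomorphism $\mu: \Kdim \to \mathbb{Z}[\Bir_k]$, where $\Bir_k$ is the commutative monoid of birational equivalence classes of smooth projective varieties under product, with $\mu([V]_{\dim V}) = [V]_{\mathrm{bir}}$ for smooth projective $V$. Well-definedness reduces to the observation that blowup differences in $\Kdim$ are multiples of $q = \tau \mathbb{L} = [\mathbb{P}^1 \times \mathbb{P}^1]_2 - [\mathbb{P}^2]_2$, which maps to zero in $\mathbb{Z}[\Bir_k]$ since $\mathbb{P}^1 \times \mathbb{P}^1$ and $\mathbb{P}^2$ are birational. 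For the singular class $[\Sym^m X]_{mn}$, a cut-and-paste decomposition via any resolution gives $\mu([\Sym^m X]_{mn}) = [\widetilde{\Sym^m X}]_{\mathrm{bir}}$, modulo lower-dimensional contributions from the exceptional locus, which can be absorbed by working in the top-dimensional piece.

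Assume for contradiction that $Z_X^{\dim}(t) = P(t)/Q(t)$. Applying $\mu$ coefficient-wise shows that $\sum_m [\widetilde{\Sym^m X}]_{\mathrm{bir}} t^m$ is rational over $\mathbb{Z}[\Bir_k]$, so the sequence satisfies a linear recurrence with coefficients in $\mathbb{Z}[\Bir_k]$. Each coefficient is a single monoid element with coefficient $1$ and the dimensions $mn$ strictly increase; the partial-fraction analysis of rational power series over the fraction field of the monoid algebra (combined with the constraint that integer cancellations of monomials must collapse to a single monoid element) forces the sequence to be eventually geometric: $[\widetilde{\Sym^m X}]_{\mathrm{bir}} = \alpha \cdot \gamma^m$ for fixed $\alpha, \gamma \in \Bir_k$. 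Dimension matching gives $\dim \alpha = 0$ and $\dim \gamma = n$, and evaluating at $m = 1$ (or extracting $\gamma$ from large $m$ via unique factorization in $\Bir_k$) identifies $\gamma$ with the class of $X$. Hence $\widetilde{\Sym^m X} \sim_{\mathrm{bir}} X^m$ for all $m \gg 0$.

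The graded case now follows by comparing plurigenera. Choose an even integer $j$ with $P_j(X) \geq 1$, available since $\kappa(X) \geq 0$; the parity ensures the $S_m$-action on $H^0(X^m, jK_{X^m})$ is trivial, giving $P_j(\widetilde{\Sym^m X}) = \binom{P_j(X)+m-1}{m}$, polynomial in $m$. Birational invariance and K\"unneth would then require $\binom{P_j(X)+m-1}{m} = P_j(X)^m$ for all $m \gg 0$, forcing $P_j(X) = 1$ for every such $j$, i.e., $\kappa(X) = 0$. For $\kappa(X) \geq 1$, this is already a contradiction. For $\kappa(X) = 0$, I would contradict $\widetilde{\Sym^m X} \sim_{\mathrm{bir}} X^m$ using the birational invariants $h^{p, 0}$: for $X$ with $h^{n, 0}(X) = 1$ (e.g.\ a Calabi--Yau type), K\"unneth gives $h^{n, 0}(X^m) = m$ while the Macdonald-type formula gives $h^{n, 0}(\widetilde{\Sym^m X}) = 1$ for $n$ even; in the residual Enriques-type case where all $h^{p, 0}(X) = 0$, one distinguishes via $\pi_1$ (since $\pi_1(X^m) = \pi_1(X)^m$ grows with $m$). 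For the ungraded statement, assuming Conjecture \ref{Sym commute D?}, I would lift a rational expression $Z_X(t) = P(t)/Q(t)$ in $K_0(\Var_k)[t]$ homogeneously to $\Kdim[t]$ by placing each coefficient of $Q$ in its correct degree $in$, using the $\mathbb{D}$-fixedness of $[\Sym^m X]_{mn}$ to verify that the lifted equation remains a polynomial identity in $\Kdim[t]$, reducing to the graded case.

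The main obstacle is the rigidity argument of the second paragraph: extracting the eventually-geometric conclusion from rationality in a monoid algebra requires careful control of which monomial terms can cancel in the recurrence, and the free-monoid structure of $\Bir_k$ used here is implicit. The $\kappa(X) = 0$ subcase is likewise delicate, requiring appeal to invariants outside the pure plurigenus framework and a case analysis according to the classification of varieties of Kodaira dimension zero.
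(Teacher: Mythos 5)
Your reduction to a statement in $\mathbb{Z}[\Bir_k]$ is the right first move, but the heart of the argument — extracting an ``eventually geometric'' conclusion $[\widetilde{\Sym^m X}]_{\bir} = \alpha\gamma^m$ from rationality in $\mathbb{Z}[\Bir_k][[t]]$ — has a gap you yourself flag, and it is fatal. That rigidity step needs $\mathbb{Z}[\Bir_k]$ to be the monoid ring of a \emph{free} abelian monoid (so that one has a fraction field, unique factorization, and the Larsen--Lunts rationality criterion), but $\Bir_k$ is not known to be free: there is no cancellation or unique factorization theorem for birational classes under product. The paper sidesteps this exactly by pushing one step further: it composes the quotient $\Kdim \to \mathbb{Z}[\Bir_k]$ with a map $\Bir_k \to \mathbb{N}$ given by $[Z] \mapsto h^0(Z, \omega_Z^{\otimes d})$ (or, for the full $\kappa \ge 0$ case, with a refined invariant $\nu_d$ taking values in a free abelian monoid $M$ of polynomials with constant term $1$), and then the Arapura--Archava theorem (Theorem \ref{plurigenera}) shows the composite is a $\lambda$-homomorphism. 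This lets one compute $\mu(Z_X^{\dim}(t))$ explicitly as $\sum_m [\binom{m+h-1}{h-1}] t^m$ in $\mathbb{Z}[\mathbb{N}][[t]]$ and then apply the Hankel determinant / rationality criterion in a genuine integral domain. You reach for plurigenera in your third paragraph, but only \emph{after} the unsupported rigidity step; the paper uses them \emph{as} the map that lands in a free monoid, which is the crucial technical difference.

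Your handling of $\kappa(X) = 0$ is a second, independent gap: you appeal to a classification of Kodaira-dimension-zero varieties (Calabi--Yau vs.\ Enriques-type vs.\ fundamental group arguments), which does not exist in dimension $> 2$, and your computation of $h^{p,0}(\widetilde{\Sym^m X})$ via a Macdonald-type formula is not justified for an arbitrary resolution of the (singular) symmetric power. The paper handles $\kappa \ge 0$ uniformly via the refined invariant $\nu_d$ (packaging $h^0(\Psi^d\Omega_Z^i)$ for all $i$) together with the boundedness Lemma \ref{bounded} and the periodicity criterion Lemma \ref{rationality criteria}, avoiding classification entirely. Finally, for the ungraded conditional statement, the paper's route is to work modulo $\mathbb{L}$ via $K_0(\Var_k) \to \mathbb{Z}[\SB_k]$ rather than lifting a rational identity back to $\Kdim$; the role of Conjecture \ref{Sym commute D?} (via Conjecture \ref{is sym D-sing?}) is precisely to guarantee that the image of $[\Sym^m X]$ under this reduction equals the stable birational class of a resolution, which is the delicate point your lifting sketch does not address.
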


In fact, a recently posted preprint \cite{shein} independently proves irrationality unconditionally in the ungraded setting.

Our proof of Theorem \ref{Sym commute D torsion} suggests that symmetric power operations may be better behaved in $\Kstk$. It would be interesting to know whether the stacky Kapranov zeta function $Z_X^{\Stk}(t) \in \Kstk[\tau^{-1}][[t]]$ is irrational. Our argument fails in this setting.

\subsection{Gluing Morphisms}
One can define a relative graded Grothendieck group of varieties $K_0(\Var_X^{\dim})$ over a base scheme $X$. Here, the classes are of varieties equipped with a morphism to $X$, which the cut-and-paste relations are required to respect. We use relative dimension to measure graded degree. Our structural results extend immediately to this setting. In fact, the involution $\mathbb{D}_X$ is part of a six-functor formalism on $K_0(\Var_{X}^{\dim})$.

Suppose we have a closed subscheme $Y \subset X$, with complement $U = X - Y$. Then there exists a canonical direct sum decomposition
\begin{align*}
    K_0(\Var_X^{\dim}) = K_0(\Var_U^{\dim}) \oplus K_0(\Var_Y^{\dim}).
\end{align*}
The relationship between the involutions $\mathbb{D}_X$, $\mathbb{D}_U$, and $\mathbb{D}_Y$ is more involved. It is captured by a map
\begin{align*}
    \mathfrak{g}_U^Y: K_0(\Var_U^{\spr}) \rightarrow K_0(\Var_Y^{\spr}),
\end{align*}
which we call the gluing morphism of $Y$ to $U$ via $X$. More specifically, $\mathfrak{g}_U^Y = -i^* \pi_2 j_!$ is defined in terms of the projection $\pi_2$ from earlier.

We explore the geometric data witnessed by gluing morphisms.

\begin{thm}\label{intro glue thm}
Let $U$ be a smooth variety. Choose a smooth compactification $X$ of $U$ whose boundary $D = X - U$ is an snc divisor $D = \cup_{i \in I} D_i$. Then the class
\begin{align*}
    \sum_{\emptyset \neq J \subset I} (-1)^{|J|-1} [D_J \times \mathbb{P}^{|J|-1}] \in \Kspr.
\end{align*}
depends only on $U$, where we denote $D_J = \cap_{j \in J} D_j$ for $J \subset I$.

In particular, if $D$ is smooth, then the birational type and Hodge numbers of $D$ are independent of the choice of compactification.
\end{thm}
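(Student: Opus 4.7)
\emph{Proof plan.} The strategy is to identify the sum in the theorem with $-\pi_2([U]_n)$, where $n = \dim U$ and $\pi_2 \colon \Kdim \to \Kspr$ is the projection onto the second summand in the decomposition $\Kdim \cong \Kspr \oplus \tau\cdot\Kspr$ furnished by Theorem \ref{intro str iso}. Since $\Kspr$ sits as the first summand and acts on $\Kdim$ preserving the decomposition, $\pi_2$ is $\Kspr$-linear. Because $[U]_n$ depends only on the abstract variety $U$ (not on any compactification), this identification makes independence of the choice of $X$ automatic; all of the work lies in computing $\pi_2([U]_n)$ explicitly.

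The computational core is the identity $\pi_2(\tau^k) = [\mathbb{P}^{k-1}]_{k-1}$ for every $k \geq 1$. Rewriting the defining quadratic as $\tau^2 = [\mathbb{P}^1]_1 \cdot \tau - \tau\mathbb{L}$, with $\tau\mathbb{L} = [\mathbb{P}^1\times\mathbb{P}^1]_2 - [\mathbb{P}^2]_2 \in \Kspr$, one reads off the recursion $\pi_1(\tau^{k+1}) = -\tau\mathbb{L}\cdot\pi_2(\tau^k)$ and $\pi_2(\tau^{k+1}) = \pi_1(\tau^k) + [\mathbb{P}^1]_1\cdot\pi_2(\tau^k)$. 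An induction on $k$ then reduces the claim to the classical telescoping identity $[\mathbb{P}^1]\cdot[\mathbb{P}^{k-1}] - \mathbb{L}\cdot[\mathbb{P}^{k-2}] = [\mathbb{P}^k]$, interpreted in its graded form inside $\Kspr$.

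Now fix a smooth compactification $X \supset U$ with snc boundary $D = \cup_{i\in I}D_i$. Cut-and-paste gives $[U]_n = [X]_n - [D]_n$, and inclusion-exclusion for the snc divisor yields $[D]_n = \sum_{\emptyset\neq J\subset I}(-1)^{|J|-1}[D_J]_n$. Each $D_J$ is smooth proper of dimension $n-|J|$, so $[D_J]_{n-|J|} \in \Kspr$ and $[D_J]_n = [D_J]_{n-|J|}\cdot\tau^{|J|}$. Applying the $\Kspr$-linear map $\pi_2$, using $\pi_2([X]_n) = 0$ and the formula of the previous paragraph, yields
\[
-\pi_2([U]_n) \;=\; \sum_{\emptyset\neq J\subset I}(-1)^{|J|-1}[D_J]_{n-|J|}\cdot[\mathbb{P}^{|J|-1}]_{|J|-1} \;=\; \sum_{\emptyset\neq J\subset I}(-1)^{|J|-1}[D_J \times \mathbb{P}^{|J|-1}],
\]
which is the claimed formula. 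Since the left-hand side depends only on $U$, so does the right-hand side.

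For the final sentence, if $D$ is smooth then the snc condition forces its irreducible components to be pairwise disjoint, so only the $|J|=1$ terms survive and the invariant collapses to $[D] \in \Kspr$. Hodge numbers are computed by a motivic measure factoring through $\Kspr \to \K$, while the Larsen--Lunts isomorphism $\K/(\mathbb{L}) \cong \mathbb{Z}[\SB_k]$ recovers the stable birational class of $D$; both are therefore compactification-independent. The principal subtlety of the proof is the evaluation $\pi_2(\tau^k) = [\mathbb{P}^{k-1}]_{k-1}$; beyond that the argument is essentially formal consequence of Theorem \ref{intro str iso} and the snc inclusion-exclusion.
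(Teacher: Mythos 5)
Your proposal is correct and is essentially the same argument the paper uses, just specialized directly to the absolute case. The paper proves the relative identity $\mathfrak{g}_U^D(\mathbbm{1}_U) = \sum_{\emptyset\neq J}(-1)^{|J|-1}[\tilde{D}_J\times\mathbb{P}^{|J|-1}]$ in $K_0(\Var_D^{\spr})$ (Proposition \ref{genl gluing application}) by decomposing $[U\to X] = \sum_J(-1)^{|J|}\tau^{|J|}[\tilde{D}_J]$ in $K_0(\Var_X^{\dim})$ and reading off $\pi_1,\pi_2$ via Example \ref{P^k example}, then pushes forward via Proposition \ref{gluing refinement} to get the absolute statement (Corollary \ref{paper glue thm}); you collapse these two steps by working in $\Kdim$ from the start, with $\mathfrak{g}_U(\mathbbm{1}_U) = -\pi_2([U]_n)$ built into the definitions. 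The computations $\pi_2(\tau^k) = [\mathbb{P}^{k-1}]$ and the snc inclusion-exclusion are identical in substance.

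Two small points. First, the telescoping identity you invoke should read $[\mathbb{P}^1]\cdot[\mathbb{P}^{k-1}] - \tau\mathbb{L}\cdot[\mathbb{P}^{k-2}] = [\mathbb{P}^k]$ in $\Kspr$ (you wrote $\mathbb{L}$ rather than $\tau\mathbb{L}$); the version with a bare $\mathbb{L}$ is the ungraded specialization and does not type-check in the graded ring. Second, your last paragraph only delivers the \emph{stable} birational class of $D$, since you pass to $\K/(\mathbb{L})$. The theorem asserts that the honest birational type is determined. To get this, stay graded: compose $\Kspr \hookrightarrow \Kdim$ with the quotient $\Kdim \to \Kdim/(\tau) \cong \mathbb{Z}[\Bir_k]$ (which sends $[Z]_d$ to its birational class when $d=\dim Z$). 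Since $[D]_{n-1}\in\Kspr$ lands in degree $n-1$ and $\dim D = n-1$, this recovers the birational class of $D$, not merely its stable birational class.
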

This result is likely already known to experts and can be proved more directly by careful application of the weak factorization theorem. Nevertheless, our approach is more systematic and allows for generalizations, as in Section \ref{gluing map}.

\begin{exmp}
If $X = \mathop{\mathrm{Spec}}k[[t]]$ with $U = \mathop{\mathrm{Spec}}k((t))$ and $Y = \mathop{\mathrm{Spec}}k$, the gluing map
\begin{align*}
    \mathfrak{g}_U^Y: K_0(\Var_{k((t))}^{\spr}) \rightarrow \Kspr
\end{align*}
extends to recover the volume morphism $\Vol:  K_0(\Var_{k((t))}^{\dim}) \rightarrow \Kdim$ constructed by hand in \cite{K^dim}. It is the key ingredient in their proof that birational type specializes in smooth families, and it is exploited in \cite{tropical} and \cite{moe} to establish new examples of irrational hypersurfaces.
\end{exmp}

\subsection{$\mathbb{D}$-Singularities} 
We study the following condition, which captures what it means for a variety to be smooth from the perspective of the graded Grothendieck group. Its definition, like that of the gluing morphism, relies on Theorem \ref{intro str iso}.

\begin{defn}
We say a variety $X$ has $\mathbb{D}$-singularities if the identity morphism $\mathbbm{1}_X = [X \xrightarrow[]{id} X]_0$ of $\KdimX$ lies in the subgroup $\KsprX$. Equivalently, $\pi_2(\mathbbm{1}_X) = 0$.
\end{defn}

This is a local condition. If $X$ is proper and has $\mathbb{D}$-singularities, then $\mathbb{D}[X] = [X]$, an identity which automatically holds if $X$ is also smooth.

Concretely, a variety $X$ has $\mathbb{D}$-singularities if it can be constructed by cutting into pieces and then gluing back together a $\mathbb{Z}$-linear combination of smooth varieties that are proper over $X$ and of the same dimension as $X$.

Our definition of $\mathbb{D}$-singularities refines the more ad hoc notions of $\mathbb{L}$-rational singularities \cite{stablebirational} and B-rational singularities \cite{kontsevichtschinkel} found in the literature (see Examples \ref{L exmp} and \ref{B exmp}). Moreover, it can be computed effectively, provided we understand a log resolution of $X$. We present a few examples here to help with intuition. More can be found in Section \ref{D sing section}.

\begin{exmp}
A curve $X$ has $\mathbb{D}$-singularities if and only if it is unibranched at each point.
\end{exmp}

\begin{exmp}
Let $X$ be a cone over a smooth and proper variety $Y$ of dimension $n$. Then $X$ has $\mathbb{D}$-singularities if and only if
\begin{align*}
    [Y] = [\mathbb{P}^n] \hspace{.3cm} \text{in} \hspace{.3cm} \Kdim.
\end{align*}
For example, a cone over a Veronese embedding of $\mathbb{P}^n$ or over an odd-dimensional complex quadric has $\mathbb{D}$-singularities.
\end{exmp}

In each of these cases, if $X$ has $\mathbb{D}$-singularities, then it is a rational homology manifold, i.e. the rational homology of the link at each singular point is equal to that of a sphere. More generally, Proposition \ref{D-sing implies rhm} shows that if $X$ is a local complete intersection with isolated singularities and $\mathbb{D}$-singularities, then $X$ is a rational homology manifold. Nevertheless, there exist $\mathbb{D}$-singularities which are not rational homology manifolds, as in Example \ref{rhm example}.

Adapting arguments of Esser-Scavia \cite{esserquotient} and Bittner \cite{bittnertoric}, we prove:

\begin{thm}\label{intro abelian quot}
Let $G$ be a finite abelian group acting on a smooth quasi-projective variety $X$. Then the quotient variety $X/G$ has $\mathbb{D}$-singularities. 
\end{thm}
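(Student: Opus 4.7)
The plan is to build a $G$-equivariant resolution of singularities of $X$ on which the induced $G$-action has stabilizers generated by pseudo-reflections at every point. By the Chevalley-Shephard-Todd theorem the quotient of such a resolution is smooth, and the relative analog of the gluing formula of Theorem \ref{intro glue thm} then expresses $\mathbbm{1}_{X/G}$ as a $\mathbb{Z}$-linear combination of classes of varieties smooth and proper over $X/G$, which is the $\mathbb{D}$-singularity condition.

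First I would construct a proper $G$-equivariant modification $\tilde{X} \to X$ with $\tilde{X}$ smooth, such that the stabilizer in $G$ of every point of $\tilde{X}$ is generated by pseudo-reflections on the tangent space, and such that the non-free locus is an snc divisor $\sum_{i \in I} \tilde{D}_i$. Since $G$ is finite abelian and $X$ is smooth, each fixed locus $X^H$ for $H \leq G$ is smooth; iteratively blowing up these loci in order of decreasing codimension produces the desired $\tilde{X}$, exactly as in the toric setup of \cite{bittnertoric} and the finite-group setup of \cite{esserquotient}. By Chevalley-Shephard-Todd, the quotient $\tilde{Y} := \tilde{X}/G$ is smooth, and the image of the non-free locus is an snc divisor $\bar{D} = \sum_{i} \bar{D}_i \subset \tilde{Y}$.

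Next I would apply the relative analog of Theorem \ref{intro glue thm} to the proper birational morphism $\tilde{Y} \to Y := X/G$. This writes $\mathbbm{1}_Y \in K_0(\Var_Y^{\dim})$ as a signed combination of $[\tilde{Y} \to Y]$ together with terms $[\bar{D}_J \times \mathbb{P}^{|J|-1} \to Y]$ for nonempty $J \subset I$, where $\bar{D}_J := \bigcap_{j \in J} \bar{D}_j$. Each of $\tilde{Y}$ and the $\bar{D}_J$ is smooth and maps properly to $Y$, so every constituent class lies in $K_0(\Var_Y^{\spr})$; this forces $\mathbbm{1}_Y$ to lie there as well and proves $\pi_2(\mathbbm{1}_Y) = 0$.

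The principal obstacle is the combinatorial control required in the first step: one must choose the sequence of blowup centers carefully to avoid reintroducing non-pseudo-reflection stabilizers along new exceptional loci. The abelian hypothesis on $G$ is essential here, as it guarantees that the local model of the action near each point is simultaneously diagonalizable and hence toric after quotienting by the cyclic part of the stabilizer, so that Bittner's explicit toric analysis can be invoked. Adapting the toric computations of \cite{bittnertoric} and the finite-group argument of \cite{esserquotient} to the graded relative setting of $K_0(\Var_Y^{\dim})$, and verifying that the gluing identity remains a combination of classes smooth and proper \emph{over} $Y$ (rather than merely as abstract varieties), constitutes the bulk of the remaining work.
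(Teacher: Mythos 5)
Your proposal takes a genuinely different route from the paper (which passes through $G$-equivariant toroidal resolution, the toric Dehn--Somerville computation of Bittner, $G$-equivariant weak factorization, the abelian blow-up formula of Corollary~\ref{G-equ weak}, and induction on dimension), but there is a gap at the decisive step.

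The problem is your claim that the ``relative analog of Theorem~\ref{intro glue thm}'' writes $\mathbbm{1}_Y$ as a signed combination of classes in $K_0(\Var_Y^{\spr})$. Proposition~\ref{genl gluing application} computes $\mathfrak{g}_U^D(\mathbbm{1}_U)$, which is the $\pi_2$-part of $[U \to Y]$, where $U$ is the smooth locus; it says nothing about $\mathbbm{1}_Y = [U\to Y] + [Z\to Y]$ as a whole, where $Z = Y - U$. The condition $\pi_2(\mathbbm{1}_Y) = 0$ requires a cancellation between $\pi_2([U\to Y])$ and $\pi_2([Z\to Y])$, and the latter depends on the geometry of the singular locus $Z$, which you have not controlled. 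In fact, the existence of a smooth resolution $\tilde{Y}\to Y$ with snc exceptional locus is far from sufficient for $\mathbb{D}$-singularities: take $Y$ the affine cone over the Segre variety $V = \mathbb{P}^1\times\mathbb{P}^1\times\mathbb{P}^1\subset\mathbb{P}^7$; blowing up the vertex gives a smooth resolution with smooth (hence snc) exceptional divisor $V$, yet by the cone criterion in the paper $Y$ fails to have $\mathbb{D}$-singularities because $[V]=(\tau+\mathbb{L})^3\neq[\mathbb{P}^3]$ in $\Kdim$. So having $\tilde{Y}=\tilde{X}/G$ smooth gives you nothing directly.

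What is actually needed, even granting your first step (a $G$-equivariant modification $\tilde{X}\to X$ with pseudo-reflection stabilizers, which itself requires a nontrivial argument and is \emph{not} a consequence of standard equivariant resolution with snc fixed locus), is to control how $[X/G]$ and $[\tilde{X}/G]$ differ inside $K_0(\Var_{X/G}^{\dim})$. The paper does this by invoking $G$-equivariant weak factorization so that $\tilde{X}\to X$ decomposes into $G$-equivariant blow-ups of smooth centers $Y_k$, applying the identity $[\Bl_{Y_k}X_k/G] - [X_k/G] = \tau\mathbb{L}\cdot[Y_k/G]\cdot[\mathbb{P}^{j_k-1}]$ from Corollary~\ref{G-equ weak} (which crucially uses Lemma~\ref{vector space}: $\mathbb{P}(V)/G$ has the class of a projective space when $G$ is abelian), and then inducting on dimension to conclude each $[Y_k/G]$ already lies in the smooth-proper part. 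If you supply this machinery, and if you can establish the pseudo-reflection resolution, your strategy can be made to work; but as written, the second step is a non-sequitur.
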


Not all quotient singularities are $\mathbb{D}$-singularities; this is related to the failure of Noether's question. Conjecture \ref{is sym D-sing?} is a refined version of Conjecture \ref{Sym commute D?} which asks, nevertheless, whether the symmetric powers $\Sym^m X$ of a smooth variety $X$ have $\mathbb{D}$-singularities. We prove in Theorem \ref{sym d-sing} that this is true, up to potential zero divisors. A recent preprint \cite{shein} provides further evidence by proving $\Sym^m X$ has $\mathbb{L}$-rational singularities.

\subsection*{Acknowledgements}
I would like to sincerely thank my advisor, Mihnea Popa, for his constant support during the preparation of this paper. I am also grateful to Brendan Hassett, Mircea Musta{\c{t}}{\u{a}}, Evgeny Shinder, and Anh Duc Vo for many helpful conversations and comments.

\section{Graded Grothendieck groups}

\subsection{Preliminaries}
Let $S$ be a base scheme. 

Here and throughout, we require that our base schemes be noetherian and excellent of characteristic zero. We are primarily interested in the case when $S$ is a quasi-projective variety or the spectrum of a discrete valuation ring over a field $k$ of characteristic zero. We impose such conditions on $S$ to access results on resolution of singularities, the weak factorization theorem, and a well-behaved notion of relative dimension. 

Namely, let $f: X \rightarrow S$ be a morphism of finite type and assume that $X$ is irreducible. Then we set 
\begin{align*}
    \dim_S X = \tr \deg ( k(X), k(V) ) - \codim(V,S),
\end{align*}
where $V$ is the closure of $f(X)$ in $S$. In particular, if $S$ is a variety over a field $k$, we have $\dim_S X = \dim X - \dim S$. We will use the following basic properties of relative dimension without further comment.

\begin{lem}\label{dimension properties}
\begin{enumerate}
    \item If $U$ is an open subscheme of $X$, then $\dim_S U = \dim_S X$.
    \item If $Y \subset X$ is a closed irreducible subscheme, then 
        \begin{align*}
            \dim_S X = \dim_S Y + \codim(Y, X).
        \end{align*}
    \item If $Y \rightarrow X$ is a dominant morphism of irreducible schemes over $S$, then
        \begin{align*}
            \dim_S Y = \dim_S X + \tr \deg(k(Y), k(X)).
        \end{align*}
    \item If $X \xrightarrow[]{f} S \rightarrow T$, then $\dim_T X = \dim_S X + \dim_T S$.
    \item If $X \rightarrow S$ is flat of relative dimension $d$, then $\dim_S X = d$.
\end{enumerate}
\end{lem}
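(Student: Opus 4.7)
The plan is to handle each of the five properties separately, reducing each to classical facts about transcendence degree and codimension available in excellent (hence universally catenary) noetherian schemes. I would use (3) as a tool for (4), and use (2) implicitly in places; otherwise the five items are essentially independent.

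For (1), since $X$ is irreducible, the open subscheme $U$ is dense and irreducible, so $k(U) = k(X)$. By continuity $f(X) = f(\overline{U}) \subseteq \overline{f(U)}$, so the closures of $f(U)$ and $f(X)$ in $S$ agree, making both terms in the definition match. For (3), dominance of $Y \to X$ forces the closures of $f(X)$ and of the composite image in $S$ to coincide (call this $V$), and the identity then reduces to the tower formula for transcendence degrees $k(V) \subseteq k(X) \subseteq k(Y)$. For (5), a flat morphism of finite type is open, so $f(X)$ is an open subset of $\overline{f(X)} = V$; in particular $V$ is an irreducible component of $S$, giving $\codim(V,S) = 0$. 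Relative dimension $d$ means the generic fiber has dimension $d$, which is exactly $\tr\deg(k(X), k(V))$.

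For (2), let $V_X, V_Y$ denote the closures in $S$ of the images of $X, Y$. The excellence of $S$ ensures it is universally catenary, so $\codim(V_Y, S) = \codim(V_Y, V_X) + \codim(V_X, S)$. After substituting, the claim reduces to the dimension formula
\begin{align*}
    \tr\deg(k(X), k(V_X)) - \tr\deg(k(Y), k(V_Y)) = \codim(Y,X) - \codim(V_Y, V_X)
\end{align*}
for the dominant finite-type morphism $X \to V_X$ and the closed irreducible subscheme $Y$ dominating $V_Y$; this is the standard EGA IV, 5.6.5 / Stacks Project 02JU result in the universally catenary setting.

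For (4), let $V_S$ be the closure of $f(X)$ in $S$ and $W_T$ the closure of $g(V_S)$ in $T$ (which also equals the closure of the composite image of $X$). I would first apply (3) to the dominant morphism $X \to V_S$ viewed over $T$, obtaining $\dim_T X = \dim_T V_S + \tr\deg(k(X), k(V_S))$. Since $\dim_S X = \tr\deg(k(X), k(V_S)) - \codim(V_S, S)$, the claim reduces to showing $\dim_T V_S + \codim(V_S, S) = \dim_T S$, which is (2) applied to the closed inclusion $V_S \hookrightarrow S$ composed with $S \to T$ (interpreting $\dim_T S$ via the irreducible component of $S$ containing $V_S$, or simply assuming $S$ irreducible, which is the case of interest).

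I expect the main obstacle to be (4), since the image of $X$ in $S$ need not be dense, forcing us to juggle two different closure subschemes ($V_S$ in $S$ and $W_T$ in $T$) and carefully apply the catenary identity; the other parts are either direct manipulations of the definition or a single appeal to a standard dimension formula.
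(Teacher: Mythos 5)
Your proof is correct and, for parts (4) and (5), follows essentially the same reduction as the paper's own argument: in (4) you apply (3) to $X \to V_S$ and then invoke (2) for the closed inclusion $V_S \hookrightarrow S$ over $T$, which is exactly the chain the paper uses (the paper phrases it as two applications of (3) followed by (2), but since $\dim_S V = -\codim(V,S)$ by definition, the two versions are identical). The one genuine difference is in (1)--(3): the paper simply cites Fulton's \emph{Intersection Theory}, Lemma~20.1, whereas you give self-contained proofs. Your treatment of (2) correctly identifies the substantive ingredient, namely the dimension formula for finite-type dominant morphisms over a universally catenary base (EGA~IV~5.6.5 / Stacks 02JU) together with the catenary identity $\codim(V_Y,S) = \codim(V_Y,V_X) + \codim(V_X,S)$, which is precisely why the paper's standing excellence hypothesis on $S$ is needed. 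Your argument for (5) is also more explicit than the paper's ``clear'': openness of flat finite-type morphisms over a noetherian base gives $\codim(V,S)=0$, and the generic fiber computation gives $\tr\deg(k(X),k(V)) = d$. You also flag, correctly, the mild wrinkle in (4) when $S$ is reducible (the definition only literally applies to irreducible schemes, with a sup for reducible ones); the paper doesn't mention this, but it's harmless in the cases used. In short: same proof strategy, with the citation to Fulton replaced by the underlying dimension-theoretic facts.
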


\begin{proof}
The first three properties hold by \cite[Lemma $20.1$]{fulton2013intersection}.

For $(4)$, denoting by $V$ the closure of the image $f(X)$ in $S$, we have $\dim_S X = \dim_S V + \tr \deg(k(X), k(V))$ and $\dim_T X = \dim_T V + \tr \deg(k(X), k(V))$ by $(3)$. We therefore reduce to the case where $X = V$ is a closed subscheme of $S$. The result is then immediate from $(2)$. 

Property $(5)$ is clear.
\end{proof}

If $X$ is possibly reducible, let $\dim_S X = \sup \dim_S X_i$ be the maximum relative dimension, taken over the irreducible components $X_i$ of $X$.

\begin{defn}\label{graded K0S}
The graded Grothendieck group $\KdimS$ of $S$-varieties is the $\mathbb{Z}$-graded abelian group with:

\begin{itemize}
    \item generators in degree $d$ the isomorphism classes $[X]_d$ of $S$-schemes $X$ of finite type, with $d \geq \dim_S X$, and
    \item relations $[X]_d = [Y]_d + [X-Y]_d$ for $X$ an $S$-scheme of finite type with $d \geq \dim_S X$ and $Y$ a closed subscheme of $X$.
\end{itemize} 
\end{defn}

We will call a relation as above a cut-and-paste or gluing relation. We sometimes write $[X]_d$ as $[X \xrightarrow[]{f} S]_d$ to emphasize the morphism $f$. 

Denote by $\mathbbm{1}_S =  [S \xrightarrow[]{id} S]_0 \in \KdimS$ the identity morphism of $S$. We will also write
\begin{align*}
    \tau = [\mathop{\mathrm{Spec}}k]_1 \hspace{.4cm} \mathrm{and} \hspace{.4cm} \mathbb{L} = [\mathbb{A}_k^1]_1 \hspace{.4cm} \mathrm{in} \hspace{.2cm} \Kdim.
\end{align*}

\subsection{Functoriality}
Consider a morphism $\phi: S \rightarrow T$ of finite type. There is a pushforward homomorphism
\begin{align*}
    \phi_!: \KdimS \rightarrow K_0(\Var_T^{\dim}), \hspace{.5cm} [X \xrightarrow[]{f} S]_d \mapsto [X \xrightarrow[]{\phi \circ f}T]_{d+ \dim_T S}
\end{align*}
of graded degree $\dim_T S$.

We also have a pullback
\begin{align*}
    \phi^*: K_0(\Var_T^{\dim}) \rightarrow K_0(\Var_S^{\dim}), \hspace{.5cm} [X \rightarrow T]_d \mapsto [X \times_T S \rightarrow S]_{d + \deg \phi^*},
\end{align*}
defined if $\phi$ is flat or a closed embedding. We set $\deg \phi^* = 0$ if $\phi$ is flat and $\deg \phi^* = k$ if $\phi$ is a closed embedding of codimension $k$.

If $S$ and $T$ are of finite type over a field $k$, there is an exterior product
\begin{align*}
    \boxtimes: \KdimS \times K_0(\Var_T^{\dim}) \rightarrow K_0(\Var_{S \times T}^{\dim})
\end{align*}
defined by 
\begin{align*}
    \big( [X \xrightarrow[]{f} S]_d, [Y \xrightarrow[]{g} T]_e \big) \mapsto [X \times Y \xrightarrow[]{f \times g} S \times T]_{d+e}.
\end{align*}

Thus, $\Kdim$ naturally inherits the structure of a graded ring, and $\KdimS$ is a graded $\Kdim$-module via
\begin{align*}
    [Y]_d \cdot [X \rightarrow S]_e = [X \times Y \rightarrow S]_{d+e}.
\end{align*}
Moreover, in this situation, $\phi_!$ and $\phi^*$ are both $\Kdim$-module homomorphisms.

Even if $S$ is not of finite type over a field, we view $\KdimS$ as a graded  module over $\mathbb{Z}[\tau, \mathbb{L}]$, where $\tau$ acts by $\tau \cdot [X]_d = [X]_{d+1}$ and $\mathbb{L}$ acts by $\mathbb{L} \cdot [X]_d = [X \times_S \mathbb{A}^1_S]_{d+1}$.

\subsection{Involution}

We will construct an involution $\mathbb{D}$ on $\KdimS$ by carefully studying an alternative Bittner-style presentation.

\begin{thm}\label{bittner presentation}
The graded Grothendieck group $\KdimS$ of $S$-varieties admits a presentation with:
\begin{itemize}
    \item generators in degree $d$ the isomorphism classes $[X]_d$ of regular, connected $S$-schemes $X$, proper and of finite type over $S$, with $d \geq \dim_S X$, and
    \item relations
        \begin{align*}
            [\Bl_Y X]_d - [E]_d = [X]_d - [Y]_d,
        \end{align*}
    for $\Bl_Y X$ the blow-up of a regular, connected $S$-scheme $X$, proper and of finite type over $S$, with $d \geq \dim_S X$ along a regular and connected subscheme $Y$, and with $E$ the exceptional divisor.
\end{itemize}
\end{thm}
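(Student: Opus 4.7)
The strategy is to adapt Bittner's original proof (\cite{bittnerrelations}) of the analogous result for $K_0(\Var_k)$ to our graded relative setting. Let $\widetilde{K}$ denote the graded abelian group defined by the proposed Bittner-style presentation. The forward map $\phi: \widetilde{K} \to \KdimS$, sending each generator $[X]_d$ to its class in $\KdimS$, is well-defined because $\Bl_Y X \setminus E \cong X \setminus Y$ for a regular closed subscheme $Y$ of a regular $X$, so the blow-up relation is an immediate consequence of cut-and-paste. The real content is to construct an inverse $\psi: \KdimS \to \widetilde{K}$.

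I would define $\psi$ on isomorphism classes of finite type $S$-schemes by Noetherian induction on $\dim_S X$. If $X$ is already regular and proper over $S$, set $\psi([X]_d) = \sum_i [X_i]_d$, summed over the connected components. Otherwise, use Nagata compactification to embed $X \hookrightarrow \overline{X}$ with $\overline{X}$ proper over $S$, and apply Hironaka to produce a projective birational morphism $\widetilde{X} \to \overline{X}$ with $\widetilde{X}$ regular and proper over $S$, arranged so that the preimage of $\overline{X} \setminus X$, together with the exceptional locus, forms an SNC divisor $D = \bigcup_{i \in I} D_i$ in $\widetilde{X}$. Set
\begin{align*}
\psi([X]_d) := [\widetilde{X}]_d - \sum_{\emptyset \neq J \subseteq I} (-1)^{|J|-1} \psi([D_J]_d),
\end{align*}
where $D_J := \bigcap_{j \in J} D_j$. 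Each $D_J$ is regular, proper over $S$, and of relative dimension strictly less than $\dim_S X$, so the right-hand side is defined by induction and all terms are valid generators in degree $d$ (since $d \geq \dim_S X > \dim_S D_J$). This formula mirrors the inclusion-exclusion identity $[X]_d = [\widetilde{X}]_d - \sum_{\emptyset \neq J} (-1)^{|J|-1}[D_J]_d$ holding in $\KdimS$, which makes $\phi \circ \psi = \id$ essentially automatic.

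The main technical step is showing $\psi$ is independent of the choice of compactification and resolution, and respects cut-and-paste relations in $\KdimS$. Independence reduces, via the weak factorization theorem, to comparing the alternating sums attached to $\widetilde{X}$ and to a single blow-up $\widetilde{X}' = \Bl_Z \widetilde{X}$ along a regular center $Z$ in appropriate transverse position to $D$. A direct bookkeeping argument — applying the defining blow-up relation in $\widetilde{K}$ simultaneously to $Z \subset \widetilde{X}$ and to each $Z \cap D_J \subset D_J$, and accounting for the new exceptional divisor — shows the two sums agree. For cut-and-paste, given $Y \subset X$ closed with complement $U$, one chooses a single compactification of $X$ in which the strict transform of $Y$ together with the original boundary forms an SNC divisor in $\widetilde{X}$; partitioning the inclusion-exclusion sum along the indices supported on $Y$ versus the boundary yields $\psi([X]_d) = \psi([U]_d) + \psi([Y]_d)$.

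The main obstacle — as in Bittner's original argument — is the weak-factorization-based check that $\psi$ is independent of the smooth compactification; the bookkeeping through an arbitrary chain of blow-ups along centers transverse to an evolving SNC boundary is the delicate part. The grading itself introduces no new difficulty: blow-ups along regular centers preserve the relative dimension of the ambient regular scheme, and strata of SNC divisors have strictly smaller relative dimension, so the inequality $d \geq \dim_S(\cdot)$ is automatically maintained throughout. Once well-definedness is established, $\psi \circ \phi = \id$ on Bittner generators is immediate by taking the trivial resolution $\widetilde{X} = X$ with $I = \emptyset$.
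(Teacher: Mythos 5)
Your proposed inverse $\psi$ has a genuine gap when $X$ is singular, which is precisely where the construction has to do real work. After compactifying to $\overline{X}$ and resolving to $\widetilde{X}$, with $D = \bigcup_{i} D_i$ the preimage of the boundary together with the exceptional locus of the resolution, the open complement $\widetilde{X} \setminus D$ is not isomorphic to $X$: it is (an open subscheme of) the regular locus of $X$. Consequently the inclusion-exclusion identity you invoke, namely $[X]_d = [\widetilde{X}]_d - \sum_{\emptyset \neq J}(-1)^{|J|-1}[D_J]_d$ in $\KdimS$, is false for singular $X$; what is true is $[\widetilde{X} \setminus D]_d = [\widetilde{X}]_d - \sum_{\emptyset \neq J}(-1)^{|J|-1}[D_J]_d$, which differs from $[X]_d$ by the class of the singular locus. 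For a concrete failure, take $X$ a nodal plane cubic: your formula gives $[\mathbb{P}^1]_1 - 2[\mathrm{pt}]_1$, whereas $[X]_1 = [\mathbb{P}^1]_1 - [\mathrm{pt}]_1$ in $K_0(\Var_k^{\dim})$. So $\phi \circ \psi \neq \id$, and the well-definedness arguments (weak factorization, cut-and-paste) would then be checking the wrong map.

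The underlying structural issue is that you are trying to collapse two distinct reductions — from arbitrary to regular schemes, and from regular to regular-and-proper — into one recursive formula, and the resolution step silently changes the class you are trying to invert. The paper keeps these separate: it factors the natural map $K_0(\Var_S^{\bit}) \to \KdimS$ through an intermediate group $K_0(\Var_S^{\reg})$. The first factor $\Phi$ is inverted by stratifying an arbitrary $X$ into regular locally closed pieces $X_i$ (no resolution at all, just stratification, so the class is preserved on the nose) and setting $\Phi^{-1}([X]_d) = \sum_i [X_i]_d$. Only in the second factor $\Psi^{-1}$, where the input is already regular, does one compactify with SNC boundary and apply the alternating sum — and there the identity $[X]_d = \sum_{J} (-1)^{|J|}[D_J]_d$ really does hold. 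Your proposal can be repaired by adding a term $+\,\psi([X_{\mathrm{sing}}]_d)$ (with $X_{\mathrm{sing}}$ suitably reduced) to the recursion and running Noetherian induction honestly on the singular locus, but as written the formula does not invert $\phi$ and this is a load-bearing error, not a bookkeeping detail.

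Your remarks about the grading being harmless and about weak factorization carrying the well-definedness burden are both correct and match the paper's treatment; the problem is upstream of those steps.
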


\begin{proof}
Our argument closely mirrors the analogous ungraded argument in \cite[Theorems $3.1$ and $5.1$]{bittnerrelations}. We provide it for the convenience of the reader and for future reference.

Denote by $K_0(\Var_S^{\bit})$ the graded abelian group with presentation as in the statement of the Theorem. The natural map $K_0(\Var_S^{\bit}) \rightarrow \KdimS$ factors as a composition

\[
\begin{tikzcd}
K_0(\Var_S^{\bit}) \arrow[rd, "\Psi"'] \arrow[rr] & & \KdimS \\
& K_0(\Var_S^{\reg}) \arrow[ru, "\Phi"'] &
\end{tikzcd}
\]
where $K_0(\Var_S^{\reg})$ is the graded Grothendieck group of regular $S$-schemes, defined exactly as in Definition \ref{graded K0S}, except that all schemes are required to be regular. 

\textit{Step 1:} We show $\Phi: K_0(\Var_S^{\reg}) \rightarrow \KdimS$ is an isomorphism of graded abelian groups by constructing an inverse $\Phi^{-1}$.

Let $f:X \rightarrow S$ be a morphism of finite type and let $d \geq \dim_S X$. If necessary, replace $X$ with its reduction; this is harmless, as $[X]_d = [X_{\red}]_d$. Choose a finite stratification
\begin{align*}
    X = \coprod_{i \in I} X_i
\end{align*}
of $X$ by regular subschemes such that each of their closures $\overline{X_i}$ is a union of strata. This is possible, as $S$ being a J$2$ scheme implies that the regular locus of $X$ is open. Then set $\Phi^{-1}([X]_d) = \sum_{i \in I} [X_i]_d$. 

We need to show that this expression is well-defined. If $X$ is itself smooth, we have $[X]_d = \sum_{i \in I} [X_i]_d \in K_0(\Var_S^{\reg})$, as follows by induction on the size of the index set $I$. More generally, consider two stratifications $\coprod_{i \in I} X_i$ and $\coprod_{j\in J} X_j$ of arbitrary $X$. We can find a common refinement $\coprod_{k \in K} X_k$. Then, by the above, we deduce $[X_i]_d = \sum_{X_k \subset X_i} [X_k]_d$ for each $i \in I$. Taking the sum over $i \in I$, we obtain $\sum_{i \in I} [X_i]_d = \sum_{k \in K} [X_k]_d$. Combining this with the same argument involving the index set $J$, we deduce
\begin{align*}
    \sum_{i \in I} [X_i]_d = \sum_{k \in K} [X_k]_d = \sum_{j \in J} [X_j]_d.
\end{align*}

If $Y \subset X$ is a closed subscheme, we can find a stratification of $X$ such that $Y$ is a union of strata. This implies that $\Phi^{-1}([X]_d) = \Phi^{-1}([X-Y]_d) + \Phi^{-1}([Y]_d)$, so $\Phi^{-1}$ defines a group homomorphism $\Phi^{-1}: K_0(\Var_S^{\reg}) \rightarrow \KdimS$. It is inverse to $\Phi$ by construction.

\textit{Step 2:} We show $\Psi: K_0(\Var_S^{\bit}) \rightarrow K_0(\Var_S^{\reg})$ is an isomorphism of graded abelian groups by constructing an inverse $\Psi^{-1}$.

Let $f:X \rightarrow S$ be a morphism of finite type, with $X$ regular. By Nagata compactification, there exists an open embedding $j: X \rightarrow \overline{X}$ and proper morphism $\overline{f}: \overline{X} \rightarrow S$ such that $f = \overline{f} \circ j$. By resolution of singularities as in \cite{temkin2008desingularization}, we may assume $\overline{X}$ is regular and $D = \overline{X} - X$ is an snc divisor. Write $D = \cup_{i \in I} D_i$ and set $D_J = \cap_{j\in J} D_j$ for $J \subset I$. Then we define
\begin{align*}
    \Psi^{-1}([X]_d) = \sum_{J \subset I} (-1)^{|J|} [ D_J ]_d
\end{align*}
for $d \geq \dim_S X$.

We need to show that the expression for $\Psi^{-1}([X]_d)$ is independent of choices. Consider two such regular compactifications $X \rightarrow \overline{X}$ and $X \rightarrow \overline{X}'$. By the weak factorization theorem, \cite{weakfactorization}, Theorem $1.2.1$, they differ by a sequence of blow-up and blow-downs of regular subvarieties with normal crossings. Thus we reduce to the case of $\overline{X}' = \Bl_Z \overline{X}$, where $Z$ is a regular connected subscheme having normal crossings with $D$.

Write $D' = \overline{X}' - X$ as $\cup_{i \in I \cup \{ 0 \}} D_i'$, where $D_0'$ is the exceptional divisor over $Z$ and $D_i'$ is the strict transform of $D_i$ for $i \in I$. In fact, for $J \subset I$, we see $D_J' = \cap_{j \in J} D_j'$ is the blow-up of $D_J$ along $Z_J = Z \cap D_J$; the exceptional divisor is $E_J = D_J' \cap D_0'$. We deduce
\begin{align*}
    \sum_{J \subset I \cup \{ 0\} } (-1)^{|J|} [D_J']_d =& \sum_{J \subset I} (-1)^{|J|} \big( [D_J]_d - [Z_J]_d + [E_J]_d \big) + \sum_{J \subset I} (-1)^{|J|+1} [E_J]_d \\
    =& \sum_{J \subset I } (-1)^{|J|} [D_J]_d - \sum_{J \subset I} (-1)^{|J|} [Z_J]_d.
\end{align*}
Thus we need to show $\sum_{J \subset I} (-1)^{|J|} [Z_J]_d = 0$. As $Z$ has normal crossings with $D$, certainly $Z \subset D_{i_0}$ for some $i_0 \in I$. But then $Z_{J} = Z_{J \cup \{ i_0 \}}$ for each $J \subset I - \{i_0\}$. The vanishing follows by splitting the alternating sum into those terms containing $i_0$ and those not containing $i_0$.

Let $Y \subset X$ be a regular closed subscheme of $X$. We need to show 
\begin{align*}
    \Psi^{-1}([X]_d) = \Psi^{-1}([X-Y]_d) + \Psi^{-1}([Y]_d).
\end{align*}

Choose an open embedding $X \rightarrow \overline{X}$ where $\overline{X}$ is regular and $D = \overline{X} - X$ is an snc divisor such that the closure $\overline{Y}$ of $Y$ is smooth and has normal crossings with $D$. We can achieve this, for example, by picking a regular compactification of $X$ with snc boundary, and then replacing it with an embedded resolution of the closure of $Y$, compatible with the boundary. Set $D = \cup_{i \in I} D_i$ and denote $D_J = \cap_{j \in J} D_j$ and $Y_J = Y \cap D_J$ for $J \subset I$.

Consider the blow-up $\Bl_{\overline{Y}} \overline{X}$ of $\overline{X}$ along $\overline{Y}$. It is a regular compactification of $X-Y$ with boundary $\cup_{i \in I \cup \{0\}} D_i'$; here $D_i'$ is the strict transform of $D_i$ for $i \in I$ and $D_0$ is the exceptional divisor. Viewing $D_J' = \cap_{j \in J} D_j'$ as the blow-up of $D_J$ with center $Y_J$, we compute
\begin{align*}
    \Psi^{-1}([X-Y]_d) =& \sum_{J \subset I \cup \{0\}} (-1)^{|J|} [D_J']_d \\
    =& \sum_{J \subset I} (-1)^{|J|} \big( [D_J]_d - [Y_J]_d + [E_J]_d \big) - \sum_{J \subset I} (-1)^{|J|+1} [E_J]_d \\
    =& \Psi^{-1}([X]_d) - \Psi^{-1}([Y]_d).
\end{align*}

Therefore $\Psi^{-1}$ defines a group homomorphism $K_0(\Var_S^{\reg}) \rightarrow K_0(\Var_S^{\bit})$. It is clearly inverse to $\Psi$.

Putting Steps $1$ and $2$ together, we conclude that the composition $\Phi \circ \Psi: K_0(\Var_S^{\bit}) \rightarrow \KdimS$ is a graded group isomorphism.
\end{proof}

\begin{rmk}
Observe that the geometric inputs to this Bittner presentation are resolution of singularities, and the weak factorization theorem. We will later need a Bittner presentation for an appropriate Grothendieck group of Deligne-Mumford stacks. Essentially the same proof works in this context, as the geometric ingredients remain valid.
\end{rmk}

We now reinterpret the Bittner presentation, viewing $\KdimS$ as a module over $\mathbb{Z}[\tau, \mathbb{L}]$, where $\tau$ and $\mathbb{L}$ play symmetric roles.

\begin{cor}\label{bit module}
The graded Grothendieck group $\KdimS$ of $S$-varieties admits an alternative presentation as a $\mathbb{Z}[\tau, \mathbb{L}]$-module with:

\begin{itemize}
    \item generators in degree $d$ the isomorphism classes $[X]$ of regular, connected $S$-schemes $X$, which are proper, of finite type and of relative dimension $d$ over $S$, and
    \item relations
        \begin{align}\tag{1}
            [\Bl_Y X] = [X] + \tau \mathbb{L} \cdot (\tau^{k-1} + \ldots + \mathbb{L}^{k-1}) \cdot [Y]
        \end{align}
        for $\Bl_Y X$ the blow-up of a regular, connected $S$-scheme $X$, proper and of finite type over $S$ along a regular and connected subscheme $Y$ of codimension $k+1$, and with $E$ the exceptional divisor, and 
        \begin{align}\tag{2}
            [E] = (\tau^{k}+ \tau^{k-1} \mathbb{L} +  \ldots + \mathbb{L}^{k}) \cdot [Y]
        \end{align} 
        for $E \rightarrow Y$ a Zariski locally-trivial $\mathbb{P}^k$-fibration over a regular, connected $S$-scheme $Y$, proper and of finite type over $S$.
        
\end{itemize}
\end{cor}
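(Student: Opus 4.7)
The strategy is to pass between the Bittner presentation of Theorem \ref{bittner presentation} and the $\mathbb{Z}[\tau, \mathbb{L}]$-module presentation claimed here, using the fact that $\tau$ acts on $\KdimS$ by pure graded shift: $\tau \cdot [X]_d = [X]_{d+1}$. Consequently, every Bittner generator $[X]_d$ with $d \geq \dim_S X$ equals $\tau^{d - \dim_S X} \cdot [X]_{\dim_S X}$, so as a $\mathbb{Z}[\tau, \mathbb{L}]$-module, $\KdimS$ is generated by the ``minimal-degree'' classes $[X] := [X]_{\dim_S X}$. Writing $n = \dim_S X$ and $\codim(Y,X) = k+1$ (so that $\dim_S E = n - 1$ and $\dim_S Y = n - k - 1$), the Bittner blow-up relation indexed by $d \geq n$ becomes
\begin{align*}
    \tau^{d-n} \cdot \bigl( [\Bl_Y X] - \tau [E] - [X] + \tau^{k+1} [Y] \bigr) = 0,
\end{align*}
which is a $\mathbb{Z}[\tau, \mathbb{L}]$-multiple of the single relation
\begin{align*}
    [\Bl_Y X] - \tau [E] = [X] - \tau^{k+1} [Y]. \tag{$\ast$}
\end{align*}

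Next, I would establish relation $(2)$ directly in $\KdimS$. An easy induction on $k$ based on $\mathbb{P}^k = \mathbb{A}^k \sqcup \mathbb{P}^{k-1}$ yields $[\mathbb{P}^k]_k = \tau^k + \tau^{k-1}\mathbb{L} + \cdots + \mathbb{L}^k$ in $\Kdim$. For a Zariski locally-trivial $\mathbb{P}^k$-fibration $E \to Y$, choose a finite stratification $Y = \coprod_i Y_i$ by locally closed subschemes on which $E$ trivializes; cut-and-paste in $\KdimS$ together with the exterior-product/module structure gives
\begin{align*}
    [E] = \sum_i [Y_i \times_k \mathbb{P}^k \to S]_{n-1} = [\mathbb{P}^k]_k \cdot \sum_i [Y_i]_{\dim_S Y_i} = [\mathbb{P}^k]_k \cdot [Y],
\end{align*}
which is precisely relation $(2)$. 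Substituting $(2)$ into $(\ast)$ and telescoping via $\tau \sum_{i=0}^{k} \tau^i \mathbb{L}^{k-i} - \tau^{k+1} = \tau\mathbb{L}(\tau^{k-1} + \tau^{k-2}\mathbb{L} + \cdots + \mathbb{L}^{k-1})$, one recovers relation $(1)$; conversely, combining $(1)$ and $(2)$ reproduces $(\ast)$.

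To assemble the proof, let $M$ denote the $\mathbb{Z}[\tau, \mathbb{L}]$-module defined by the generators and relations of the corollary statement. The computations above verify that relations $(1)$ and $(2)$ hold in $\KdimS$, so there is a well-defined $\mathbb{Z}[\tau, \mathbb{L}]$-module homomorphism $M \to \KdimS$ sending each generator to itself; it is surjective since every Bittner generator is a $\tau$-power multiple of an $[X]$. Conversely, the rule $[X]_d \mapsto \tau^{d - \dim_S X} [X]$ defines a map $\KdimS \to M$ out of the Bittner presentation, well-defined because the Bittner blow-up relation is a $\mathbb{Z}[\tau, \mathbb{L}]$-multiple of $(\ast)$, which holds in $M$ as shown. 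The two maps are mutually inverse, finishing the proof.

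The main technical point is the proof of relation $(2)$, since the strata $Y_i$ used to trivialize the $\mathbb{P}^k$-fibration need not be regular. This is not an obstruction because we verify $(2)$ in $\KdimS$ via its cut-and-paste presentation, where arbitrary locally closed subschemes are permitted; the passage back to the Bittner formulation is then legitimate by Theorem \ref{bittner presentation}.
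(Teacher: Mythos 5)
Your proof is correct and takes essentially the same approach as the paper: both construct the pair of mutually inverse $\mathbb{Z}[\tau,\mathbb{L}]$-module maps $[X]_d \leftrightarrow \tau^{d-\dim_S X}[X]$ between $\KdimS$ (in its Bittner presentation) and the new module $M$, and both verify well-definedness by observing that the Bittner blow-up relation, together with the $\mathbb{P}^k$-fibration identity $[E]=(\tau^k+\cdots+\mathbb{L}^k)[Y]$ in $\KdimS$, is equivalent to relations $(1)$ and $(2)$. One small slip: in your stratification argument for relation $(2)$ you should write $[Y_i]_{\dim_S Y}$ rather than $[Y_i]_{\dim_S Y_i}$, since lower-dimensional strata must be placed in degree $\dim_S Y$ for the sum $\sum_i[Y_i]_{\dim_S Y}=[Y]$ and the degree bookkeeping against $[E]$ to come out right.
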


\begin{proof}
Temporarily denote by $K_0(\Var_S^{\tau, \mathbb{L}})$ the $\mathbb{Z}[\tau, \mathbb{L}]$-module defined by the above presentation. We make use of the Bittner presentation on $\KdimS$. Consider $\Theta: \KdimS \rightarrow K_0(\Var_S^{\tau, \mathbb{L}})$ defined on generators by $[X]_d \mapsto [X] \cdot \tau^{d - \dim_S X}$. This is a well-defined group homomorphism, for
\begin{align*}
    \Theta([\Bl_Y X]_d) &- \Theta([E]_d) - \Theta([X]_d) + \Theta([Y]_d) \\
    &= \big( [\Bl_Y X] - \tau \cdot [E] -[X] + \tau^{k+1} \cdot [Y]  \big) \cdot \tau^{d-\dim_S X} \\
    &= \big( [\Bl_Y X] - \tau \cdot (\tau^{k}+ \ldots + \mathbb{L}^{k})[Y] -[X] + \tau^{k+1} \cdot [Y]  \big) \cdot \tau^{d-\dim_S X} \\
    &= \big( [\Bl_Y X] -[X] - \tau \mathbb{L} \cdot ( \tau^{k-1} + \ldots + \mathbb{L}^{k-1}) \cdot [Y]  \big) \cdot \tau^{d-\dim_S X} \\
    &= 0,
\end{align*}
if $\Bl_Y X \rightarrow X$ is the blow-up along a regular subscheme $Y$ and $d \geq \dim_S X$. It is moreover, a $\mathbb{Z}[\tau, \mathbb{L}]$-module homomorphism.

Define an inverse $\Theta^{-1}:  K_0(\Var_k^{\tau, \mathbb{L}}) \rightarrow \KdimS$ on generators by $[X] \mapsto [X]_{\dim_S X}$. To see that it is well-defined, suppose $E \rightarrow Y$ is a Zariski locally-trivial $\mathbb{P}^k$-fibration. Then
\begin{align*}
    \Theta^{-1}([E]) = [E]_{\dim_S E} = (\tau^k + \ldots + \mathbb{L}^k) \cdot 
 [Y]_{\dim_S Y} = \Phi^{-1}((\tau^k + \ldots + \mathbb{L}^k) \cdot [Y]).
\end{align*}

Moreover, if $\Bl_Y X \rightarrow X$ is the blow-up along a regular subscheme $Y$, we compute
\begin{align*}
    \Theta^{-1}&([\Bl_Y X]) - \Theta^{-1}([X]) - \Theta^{-1}(\tau \mathbb{L} \cdot (\tau^{k-1} + \ldots + \mathbb{L}^{k-1}) \cdot [Y]) \\
    =& [\Bl_Y X]_d - [X]_d - \big( \tau \cdot (\tau^k + \ldots + \mathbb{L}^k) - \tau^k \big) \cdot [Y]_{d-k-1} \\
    =& [\Bl_Y X]_d - [X]_d - [E]_{d} + [Y]_{d} \\
    =& 0.
\end{align*}
The result follows.
\end{proof}

From now on, we will write $[X]$ as shorthand for $[X]_{\dim_S X}$ in $\KdimS$.

Denote by $\KsprS$ the graded subgroup of $\KdimS$ generated by the classes $[X]$ of regular connected schemes $X$, proper over $S$. It is a $\mathbb{Z}[\tau+\mathbb{L}, \tau \mathbb{L}]$-module.

Note that all generators and relations in Corollary \ref{bit module} take place inside of $\KsprS$. It precisely gives a presentation for $\KsprS$ as a $\mathbb{Z}[\tau+\mathbb{L}, \tau \mathbb{L}]$-module. From this, we deduce the following structural result.

\begin{thm}\label{str iso}[cf. Theorem \ref{intro str iso}]
There exists a natural isomorphism
\begin{align*}
    \KsprS \otimes_{\mathbb{Z}[\tau + \mathbb{L}, \tau \mathbb{L}]} \mathbb{Z}[\tau, \mathbb{L}] \rightarrow \KdimS
\end{align*}
of $\mathbb{Z}[\tau, \mathbb{L}]$-modules.
\end{thm}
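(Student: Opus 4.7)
The plan is to reinterpret Corollary \ref{bit module} as providing a presentation of the subgroup $\KsprS$ as a $\mathbb{Z}[\tau+\mathbb{L},\tau\mathbb{L}]$-module, after which the theorem follows by formal scalar extension. The essential observation is that the coefficient polynomials $\tau\mathbb{L}(\tau^{k-1}+\cdots+\mathbb{L}^{k-1})$ and $\tau^k+\tau^{k-1}\mathbb{L}+\cdots+\mathbb{L}^k$ appearing in relations (1) and (2) are symmetric in $\tau$ and $\mathbb{L}$, hence lie in the subring $\mathbb{Z}[\tau+\mathbb{L},\tau\mathbb{L}]$.

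First I would verify that $\KsprS$ is stable under the restricted $\mathbb{Z}[\tau+\mathbb{L},\tau\mathbb{L}]$-action inherited from $\KdimS$. Concretely, for $Y$ regular and proper over $S$ one computes
\[
(\tau+\mathbb{L})\cdot[Y] = [\mathbb{P}^1_S\times_S Y], \qquad (\tau\mathbb{L})\cdot[Y] = [\mathbb{P}^1_S\times_S\mathbb{P}^1_S\times_S Y] - [\mathbb{P}^2_S\times_S Y]
\]
in $\KdimS$ using cell decompositions of $\mathbb{P}^1$ and $\mathbb{P}^2$; both are $\mathbb{Z}$-linear combinations of classes of regular proper $S$-schemes. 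Let $M$ denote the $\mathbb{Z}[\tau+\mathbb{L},\tau\mathbb{L}]$-module defined by exactly the generators and relations of Corollary \ref{bit module}. There is a natural $\mathbb{Z}[\tau+\mathbb{L},\tau\mathbb{L}]$-linear map $\alpha : M \to \KsprS$ sending each generator to itself; it is well-defined since relations (1) and (2) hold in $\KsprS$, and surjective by the definition of $\KsprS$.

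For injectivity of $\alpha$, I would exploit that $\mathbb{Z}[\tau,\mathbb{L}]$ is a free $\mathbb{Z}[\tau+\mathbb{L},\tau\mathbb{L}]$-module of rank two with basis $\{1,\tau\}$, arising from the monic quadratic $\tau^2 = (\tau+\mathbb{L})\tau - \tau\mathbb{L}$. By Corollary \ref{bit module}, scalar extension identifies $M \otimes_{\mathbb{Z}[\tau+\mathbb{L},\tau\mathbb{L}]} \mathbb{Z}[\tau,\mathbb{L}] \cong \KdimS$. The composition
\[
\KdimS \;\xrightarrow{\;\cong\;}\; M \otimes \mathbb{Z}[\tau,\mathbb{L}] \;\xrightarrow{\;\alpha\otimes\id\;}\; \KsprS \otimes \mathbb{Z}[\tau,\mathbb{L}] \;\longrightarrow\; \KdimS,
\]
with final map induced by the inclusion $\KsprS \hookrightarrow \KdimS$, acts as the identity on each generator $[X]$, hence is the identity on $\KdimS$. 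Therefore $\alpha \otimes \id$ is injective, and faithful flatness of the scalar extension (equivalently, the explicit decomposition $M \otimes \mathbb{Z}[\tau,\mathbb{L}] = M \cdot 1 \oplus M \cdot \tau$) lifts this to injectivity of $\alpha$ itself. Thus $\alpha$ is an isomorphism, and tensoring it by $\mathbb{Z}[\tau,\mathbb{L}]$ yields the desired natural isomorphism $\KsprS \otimes_{\mathbb{Z}[\tau+\mathbb{L},\tau\mathbb{L}]} \mathbb{Z}[\tau,\mathbb{L}] \cong \KdimS$. The main obstacle is precisely this injectivity step—ruling out any extra relations in $\KsprS$ beyond those inherited from Corollary \ref{bit module}—which the freeness argument reduces cleanly to the known presentation of $\KdimS$.
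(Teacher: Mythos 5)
Your proof is correct and follows the same approach the paper takes, namely observing that the coefficient polynomials in Corollary~\ref{bit module} are symmetric in $\tau$ and $\mathbb{L}$, so the same generators and relations give a presentation of $\KsprS$ over $\mathbb{Z}[\tau+\mathbb{L},\tau\mathbb{L}]$, and then extending scalars along the free (rank-two) extension $\mathbb{Z}[\tau+\mathbb{L},\tau\mathbb{L}] \hookrightarrow \mathbb{Z}[\tau,\mathbb{L}]$. The paper leaves the injectivity of $M \to \KsprS$ implicit; your retraction-plus-faithful-flatness argument correctly and cleanly fills in that gap.
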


In particular, $\KdimS$ is a quadratic extension of $\KsprS$. As such, there is a Galois involution.

\begin{defn}
Let $\mathbb{D}: \KdimS \rightarrow \KdimS$ be the graded $\mathbb{Z}[\tau+ \mathbb{L}, \tau \mathbb{L}]$-module involution $\mathbb{D}(\alpha \otimes a(\tau, \mathbb{L}) ) = \alpha \otimes a(\mathbb{L}, \tau)$ in the notation of Theorem \ref{str iso}.

Moreover, every $\alpha \in \KdimS$ can be writted uniquely as $\alpha = \pi_1(\alpha) + \tau \cdot \pi_2(\alpha)$, where $\pi_1(\alpha), \pi_2(\alpha) \in \KsprS$. Denote by
\begin{align*}
    \pi_1: \KdimS \rightarrow \KsprS, \hspace{.5cm} \pi_2: \KdimS \rightarrow \KsprS
\end{align*}
the projection maps; they are of graded degree $0$ and $-1$ respectively.
\end{defn}

The graded involution $\mathbb{D}$ fixes classes of the form $[X]$, where $X$ is regular and proper over $S$, and it interchanges the actions of $\tau$ and $\mathbb{L}$. In fact, $\mathbb{D}$ is characterized by these two properties.

\begin{exmp}\label{P^k example}
For future reference, we record the values $\pi_1(\tau^{k+1}) = -\tau \mathbb{L} \cdot [\mathbb{P}^{k-1}]$ and $\pi_2(\tau^{k+1}) = [\mathbb{P}^{k}]$. Indeed, it suffices to check that
\begin{align*}
    \tau^{k+1} = -\tau \mathbb{L} \cdot [\mathbb{P}^{k-1}] + \tau \cdot [\mathbb{P}^{k}],
\end{align*}
where we view this identity as taking place formally in $\mathbb{Z}[\tau, \mathbb{L}]$, with $[\mathbb{P}^k]$ being shorthand for $\tau^k + \tau^{k-1}\mathbb{L} + \ldots + \mathbb{L}^k$.
\end{exmp}

Let us return to the setting of a morphism $\phi: S \rightarrow T$ of finite type. We have induced maps $\phi_!$ and $\phi^*$. Define $\phi_* = \mathbb{D} \circ \phi_! \circ \mathbb{D}$ and $\phi^! = \mathbb{D} \circ \phi^* \circ \mathbb{D}$.

\begin{lem}\label{proper}
Suppose $\phi:S \rightarrow T$ is proper. Then $\phi_!(\KsprS) \subset K_0(\Var_T^{\spr})$. Hence $\phi_! = \phi_*$
\end{lem}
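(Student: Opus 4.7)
The plan is to verify the lemma in two steps, matching its two assertions: first that $\phi_!$ maps $\KsprS$ into $K_0(\Var_T^{\spr})$, and then to bootstrap this to the equality $\phi_! = \phi_*$ on all of $\KdimS$.

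First I would check the containment on generators. Let $[X]$ be a generator of $\KsprS$, so that $f:X \to S$ is proper, $X$ is regular and connected, and $[X] = [X]_d$ with $d = \dim_S X$. Applying $\phi_!$ produces the class $[X \xrightarrow{\phi \circ f} T]_{d + \dim_T S}$. Since properness is closed under composition and $\phi$ is proper by hypothesis, $\phi \circ f$ is proper; of course $X$ itself remains regular and connected. By Lemma \ref{dimension properties}(4),
\[
\dim_T X = \dim_S X + \dim_T S = d + \dim_T S,
\]
so the grading index is exactly $\dim_T X$. Hence the image is a generator of $K_0(\Var_T^{\spr})$.

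Next, to deduce $\phi_! = \phi_*$ on $\KdimS$, I would invoke the direct sum decomposition $\KdimS = \KsprS \oplus \tau \KsprS$ provided by Theorem \ref{str iso}. Writing $\alpha = \alpha_1 + \tau \alpha_2$ with $\alpha_i \in \KsprS$, we have $\mathbb{D}(\alpha) = \alpha_1 + \mathbb{L}\alpha_2$, since $\mathbb{D}$ fixes $\KsprS$ pointwise and interchanges $\tau$ and $\mathbb{L}$. A direct check from the definition shows $\phi_!$ is $\mathbb{Z}[\tau, \mathbb{L}]$-linear, so
\[
\phi_!(\mathbb{D}(\alpha)) = \phi_!(\alpha_1) + \mathbb{L}\,\phi_!(\alpha_2).
\]
By the first step, both $\phi_!(\alpha_i)$ lie in $K_0(\Var_T^{\spr})$ and are thus fixed by $\mathbb{D}$, so
\[
\phi_*(\alpha) = \mathbb{D}(\phi_!(\mathbb{D}(\alpha))) = \phi_!(\alpha_1) + \tau\,\phi_!(\alpha_2) = \phi_!(\alpha).
\]

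There is no real obstacle here; the lemma is essentially a formal consequence of the structural theorem and the fact that properness is preserved under composition. The only point to be careful with is the degree bookkeeping in the first step, which is handled cleanly by Lemma \ref{dimension properties}(4); everything else flows from Theorem \ref{str iso} and the defining properties of $\mathbb{D}$.
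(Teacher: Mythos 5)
Your proof is correct and follows the same approach as the paper: establish the containment $\phi_!(\KsprS) \subset K_0(\Var_T^{\spr})$ on generators using that proper morphisms compose, with the degree bookkeeping handled by Lemma \ref{dimension properties}(4). The paper leaves the deduction ``Hence $\phi_! = \phi_*$'' implicit, while you usefully spell out the formal bootstrap via the decomposition $\alpha = \alpha_1 + \tau\alpha_2$ and the $\mathbb{Z}[\tau,\mathbb{L}]$-linearity of $\phi_!$.
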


\begin{proof}
If $X$ is regular and proper over $S$, then $\phi_![X \xrightarrow[]{f} S] = [X \xrightarrow[]{\phi \circ f} T]$, with $X$ proper over $T$, as $\phi \circ f$ is a composition of proper morphisms.
\end{proof}

\begin{lem}\label{smooth}
Let $\phi:S \rightarrow T$ be a smooth morphism of relative dimension $d$. Then $\phi^*(K_0(\Var_T^{\spr})) \subset \KsprS$. Hence $\phi^! = \phi^*$
\end{lem}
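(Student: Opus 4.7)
The plan is to mirror the proof of Lemma \ref{proper}, replacing the preservation of properness by preservation of regularity. Specifically, I would reduce to checking the containment on generators $[X \xrightarrow[]{f} T]$ of $K_0(\Var_T^{\spr})$, where $X$ is regular, connected, and proper over $T$. By definition, $\phi^*[X \xrightarrow{f} T] = [X \times_T S \xrightarrow{\mathrm{pr}_2} S]$, with degrees matching via parts $(4)$--$(5)$ of Lemma \ref{dimension properties} (here $\dim_T S = d$, and $X \times_T S \to X$ is smooth of relative dimension $d$, so $\dim_S(X \times_T S) = \dim_T X$).

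The two geometric ingredients I would then check are: (i) $X \times_T S \to S$ is proper, as base change of the proper morphism $f$; and (ii) $X \times_T S$ is regular, because smoothness is stable under base change, so the projection $X \times_T S \to X$ is smooth, and smoothness over a regular scheme yields regularity. Decomposing $X \times_T S$ into its finitely many connected components (each of which is open and closed, hence regular and connected), the cut-and-paste relations express $[X \times_T S \to S]$ as a sum of classes of generators of $\KsprS$. Thus $\phi^*(K_0(\Var_T^{\spr})) \subset \KsprS$.

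For the identity $\phi^! = \phi^*$, the key observation is that $\phi^*$ is a $\mathbb{Z}[\tau, \mathbb{L}]$-module homomorphism, since flat base change commutes with both the degree shift (the action of $\tau$) and fibered product with $\mathbb{A}^1$ (the action of $\mathbb{L}$). Writing a general $\alpha \in K_0(\Var_T^{\dim})$ as $\alpha = \pi_1(\alpha) + \tau \cdot \pi_2(\alpha)$ with $\pi_i(\alpha) \in K_0(\Var_T^{\spr})$, and using that $\phi^*\pi_i(\alpha) \in \KsprS$ is fixed by $\mathbb{D}$, we get
\begin{align*}
\mathbb{D}(\phi^*\alpha) = \phi^*\pi_1(\alpha) + \mathbb{L} \cdot \phi^*\pi_2(\alpha) = \phi^*(\mathbb{D}\alpha),
\end{align*}
so $\phi^! = \mathbb{D} \circ \phi^* \circ \mathbb{D} = \phi^*$.

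The only real content is the regularity of the base change $X \times_T S$, which is precisely where the smoothness hypothesis on $\phi$ (rather than mere flatness) is essential; once that is in place, the rest is a formal consequence of Theorem \ref{str iso} and the definition of $\mathbb{D}$. The argument is completely parallel to Lemma \ref{proper}, with ``proper'' replaced by ``regular and proper'' and ``composition of proper morphisms'' replaced by ``base change preserves properness and smoothness-over-regular.''
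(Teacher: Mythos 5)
Your proof is correct and follows the same argument as the paper: $g$ is proper as a base change of the proper morphism $f$, and $X \times_T S$ is regular because it is smooth over the regular scheme $X$ via the base change of $\phi$. You additionally spell out the deduction that $\phi^! = \phi^*$ via $\mathbb{Z}[\tau,\mathbb{L}]$-linearity of $\phi^*$ and the decomposition $\alpha = \pi_1(\alpha) + \tau\pi_2(\alpha)$, a formal step the paper leaves implicit; this is a correct and worthwhile clarification.
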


\begin{proof}
It suffices to show that $\phi^*[X \xrightarrow[]{f} T] = [X \times_T S \xrightarrow[]{g} S]$ lies in $\KsprS$ if $X$ is a regular scheme, proper over $T$. Consider the following pullback diagram.
\centerline{
\xymatrix{
 X \times_T S \ar[r]^\psi \ar[d]_g & X \ar[d]^f \\
 S \ar[r]_\phi & T \\
 } 
}
The morphism $g$ is proper, as the pullback of the proper morphism $f$. Moreover, since $\psi$, as the pullback of the smooth morphism $\phi$, is smooth and $X$ is regular, we find that $X \times_T S$ is a regular scheme. Thus indeed $[X \times_T S \xrightarrow[]{g} S] \in \KdimS$.
\end{proof}

\subsection{Graded Motivic Measures}\label{motivic measures}
It can be difficult to deduce geometric results by working directly with Grothendieck groups of varieties because they are so large. Rather, one probes them by motivic measures; by this we mean homomorphisms $K_0(\Var_S^{\dim}) \rightarrow A$. We might ask that the map preserve other structures on $\Kdim$, for example that it be graded, commute with mutliplication, with involution, or with $\lambda$-structures (see Proposition \ref{K0 lambda defn}). Let us introduce some relevant motivic measures.

\textbf{Ungraded Grothendieck group:} The quotient $\KdimS / (\tau - 1) \KdimS$ recovers the usual ungraded Grothendieck group of varieties $K_0(\Var_S)$. This is the abelian group generated by isomorphism classes $[X]$ of $S$-schemes $X$ of finite type, modulo the relations $[X] = [Y]+[X-Y]$ for every closed subscheme $Y$ of $X$.

Pre-composing with reduction modulo $\tau - 1$, we see all motivic measures on $K_0(\Var_S)$ are also motivic measures on $\KdimS$. Over a finite field $k$, for example, there is the point count measure
\begin{align*}
    K_0(\Var_k) \rightarrow \mathbb{Z}, \hspace{.5cm} [X] \mapsto |X(k)|.
\end{align*}

We also have the Hodge-Deligne polynomial $H: K_0(\Var_\mathbb{C}) \rightarrow \mathbb{Z}[u,v]$, characterized by the relation
\begin{align*}
    H([X]) = H_X(u,v) = \sum_{p,q \geq 0} (-1)^{p+q} h^{p,q}(X) \cdot u^p v^q
\end{align*}
for every smooth and projective complex variety $X$.

\textbf{Birational equivalence classes:} Let $k$ be a field of characteristic zero and let $\mathbb{Z}[\Bir_k]$ (resp. $\mathbb{Z}[\SB_k]$) be the monoid ring of the monoid of (resp. stable) birational equivalence classes over $k$. There is a quotient map $\Kdim \rightarrow \mathbb{Z}[\Bir_k]$ which sends $[X]_d$ to the birational equivalence class of $X$ if $d=\dim X$ and to zero otherwise. It is easily seen to be surjective with kernel generated by $\tau$.

Let us now pause to give a conceptual proof of a theorem of Larsen-Lunts \cite{larsenluntsstable}.

\begin{prop}
$K_0(\Var_k)/(\mathbb{L}) \cong \mathbb{Z}[\SB_k]$
\end{prop}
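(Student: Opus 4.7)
The plan is to apply the involution $\mathbb{D}$ of Theorem~\ref{intro str iso} to transport the identification $\Kdim/(\tau) \cong \mathbb{Z}[\Bir_k]$ just established in the preceding paragraph into an identification of a quotient by $(\mathbb{L})$, and then quotient by $\tau - 1$ to pass from the graded ring to $K_0(\Var_k)$.

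First, I would observe that since $\mathbb{D}:\Kdim \to \Kdim$ is a ring automorphism interchanging $\tau$ and $\mathbb{L}$, precomposing the surjection $\Kdim \twoheadrightarrow \mathbb{Z}[\Bir_k]$ of kernel $(\tau)$ with $\mathbb{D}$ yields a new surjection $\Kdim \twoheadrightarrow \mathbb{Z}[\Bir_k]$ of kernel $(\mathbb{L})$. Under this composite, the class $\tau$ maps to the image of $\mathbb{D}(\tau) = \mathbb{L} = [\mathbb{A}^1]_1$, which is the birational class $[\mathbb{A}^1]_{\bir} = [\mathbb{P}^1]_{\bir}$. Thus I obtain a ring isomorphism
\begin{align*}
    \Kdim/(\mathbb{L}) \;\xrightarrow{\ \sim\ }\; \mathbb{Z}[\Bir_k], \qquad \tau \longmapsto [\mathbb{P}^1]_{\bir}.
\end{align*}

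Next, I would quotient both sides by the class of $\tau - 1$. On the left I obtain
\begin{align*}
    \Kdim/(\mathbb{L},\,\tau - 1) \;=\; \bigl(\Kdim/(\tau - 1)\bigr)\big/(\mathbb{L}) \;=\; K_0(\Var_k)/(\mathbb{L}),
\end{align*}
and on the right I obtain $\mathbb{Z}[\Bir_k]/\bigl([\mathbb{P}^1]_{\bir} - 1\bigr)$. Forcing $[\mathbb{P}^1]_{\bir} = 1$ in the monoid ring identifies $[X]_{\bir}$ with $[X \times \mathbb{P}^n]_{\bir}$ for every $n \geq 0$, which is precisely the stable birational relation; hence the right-hand side equals $\mathbb{Z}[\SB_k]$, and composing the two isomorphisms gives the claim.

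I do not expect any step to be a serious obstacle once Theorem~\ref{intro str iso} and the preceding identification $\Kdim/(\tau) \cong \mathbb{Z}[\Bir_k]$ are in hand. The content that makes the classical Larsen-Lunts proof delicate -- the nontrivial behavior of $\mathbb{L}$ against cut-and-paste relations -- is entirely repackaged into the Bittner-style presentation of Theorem~\ref{bittner presentation} and the symmetry $\tau \leftrightarrow \mathbb{L}$ furnished by $\mathbb{D}$. The only point one should verify carefully is that $\mathbb{D}$ really is a ring homomorphism (not merely a graded $\mathbb{Z}[\tau+\mathbb{L},\tau\mathbb{L}]$-module homomorphism), which follows from its description as the Galois involution of the quadratic ring extension $\Kspr \subset \Kdim$ in Theorem~\ref{str iso}.
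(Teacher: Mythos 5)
Your argument is correct and follows the same route as the paper: it establishes the isomorphism via the involution $\mathbb{D}$ swapping $\tau$ and $\mathbb{L}$, then identifies $\mathbb{Z}[\Bir_k]/([\mathbb{P}^1]_{\bir}-1)$ with $\mathbb{Z}[\SB_k]$. The paper writes the same chain of isomorphisms starting from $K_0(\Var_k)/(\mathbb{L}) \cong \Kdim/(\tau-1,\mathbb{L}) \cong \Kdim/(\mathbb{L}-1,\tau)$, which is just your two steps in the opposite order.
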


The proof of Larsen-Lunts involves careful manipulation of the weak factorization theorem and resolution of singularities. Our argument hides such an analysis behind the existence of the involution $\mathbb{D}$ (which still relies on the Bittner presentation).

\begin{proof}
We have the following chain of isomorphisms:
\begin{align*}
    K_0(\Var_k)/(\mathbb{L}) \cong& \Kdim/(\tau-1, \mathbb{L}) \\ \cong& \Kdim/(\mathbb{L}-1,\tau) \\ \cong& \mathbb{Z}[\Bir_k]/([\mathbb{P}^1]-[\mathop{\mathrm{Spec}}k]) \\ \cong& \mathbb{Z}[\SB_k],
\end{align*}
where the second is deduced by pre-composing with the involution $\mathbb{D}$.
\end{proof}

The following simple Lemma also has important connections to the literature.

\begin{lem}\label{simple lemma}
Suppose $\alpha \in \Kdim$ satisfies $\tau \alpha = 0$. Then $\alpha = \mathbb{L} \beta$ for a unique class $\beta \in \Kspr$.
\end{lem}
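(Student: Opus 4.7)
The plan is to reduce everything to a direct calculation in the structural decomposition $\Kdim \cong \Kspr \oplus \tau \cdot \Kspr$ provided by Theorem \ref{str iso}. First I would write $\alpha$ uniquely as $\alpha = \pi_1(\alpha) + \tau \cdot \pi_2(\alpha)$ with $\pi_1(\alpha), \pi_2(\alpha) \in \Kspr$. Multiplying by $\tau$ and invoking the quadratic relation $\tau^2 = [\mathbb{P}^1] \cdot \tau - \tau \mathbb{L}$ (which holds because $\tau$ is a root of $x^2 - [\mathbb{P}^1] x + \tau \mathbb{L}$ and both coefficients lie in $\Kspr$) will reexpress $\tau \alpha$ in the $\Kspr$-basis $\{1, \tau\}$ as
\begin{align*}
\tau \alpha = -\pi_2(\alpha) \cdot \tau \mathbb{L} + \big( \pi_1(\alpha) + \pi_2(\alpha) \cdot [\mathbb{P}^1] \big) \tau.
\end{align*}
The hypothesis $\tau \alpha = 0$, combined with the directness of the decomposition, then forces the two identities $\pi_2(\alpha) \cdot \tau \mathbb{L} = 0$ and $\pi_1(\alpha) = -\pi_2(\alpha) \cdot [\mathbb{P}^1]$ in $\Kspr$.

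With these in hand, I would set $\beta := -\pi_2(\alpha) \in \Kspr$ and verify existence directly: using $\mathbb{L} = [\mathbb{P}^1] - \tau$ and the second identity above,
\begin{align*}
\mathbb{L} \beta = -\pi_2(\alpha) \cdot [\mathbb{P}^1] + \pi_2(\alpha) \cdot \tau = \pi_1(\alpha) + \pi_2(\alpha) \cdot \tau = \alpha.
\end{align*}
Note that the auxiliary relation $\pi_2(\alpha) \cdot \tau \mathbb{L} = 0$ is not used in this construction, but it is the natural compatibility condition, amounting to $\tau \cdot \mathbb{L} \beta = 0$.

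For uniqueness, the key observation will be that $\mathbb{L}$ acts injectively on $\Kspr$: expanding $\mathbb{L} \gamma = [\mathbb{P}^1] \gamma - \gamma \tau$ in the basis $\{1, \tau\}$ shows that the $\tau$-coefficient $-\gamma$ must vanish whenever $\mathbb{L} \gamma = 0$, forcing $\gamma = 0$. Applied to $\mathbb{L}(\beta - \beta') = 0$ for any two candidates, this yields uniqueness. The whole argument is essentially a formal consequence of Theorem \ref{str iso}; there is no substantial obstacle beyond spotting the quadratic manipulation of $\tau^2$.
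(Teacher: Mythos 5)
Your proof is correct and follows essentially the same idea as the paper's: exploit the quadratic structure of $\Kdim$ over $\Kspr$ from Theorem \ref{str iso}. The only difference is that you work in the basis $\{1, \tau\}$ and then convert to $\mathbb{L}$ at the end, whereas the paper writes $\alpha = \gamma + \mathbb{L}\beta$ directly in the $\{1, \mathbb{L}\}$ basis, multiplies by $\tau$, and reads off $\gamma = 0$ from a single projection — so existence and uniqueness both come for free from the uniqueness of that decomposition. Your version is a bit longer because of the basis change, and you supply the uniqueness as a separate step (injectivity of $\mathbb{L}$ on $\Kspr$, which you verify correctly); but the quadratic manipulation $\tau^2 = [\mathbb{P}^1]\tau - \tau\mathbb{L}$, the projection argument, and the conclusion are the same in substance.
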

\begin{proof}
We may uniquely write $\alpha = \gamma + \mathbb{L} \beta$, where $\beta, \gamma \in \Kspr$. Then
\begin{align*}
    0 = \pi_1(\tau \alpha) = \pi_1( \tau \mathbb{L} \beta + \tau \gamma) = \gamma.
\end{align*}
\end{proof}

\begin{rmk}
In \cite{linshinder}, the authors study a map of the form
\begin{align*}
    \tilde{c}: \Bir(\mathbb{P}^n) \rightarrow \ker \big( \tau: \Kdim^{n-1} \rightarrow \Kdim^{n} \big)
\end{align*} 
to construct new elements of the Cremona group. Lemma \ref{simple lemma} shows this morphism factors through $\im( \mathbb{L}: \Kspr^{n-2} \rightarrow \Kdim^{n-1})$, a fact evident in all their examples.
\end{rmk}

\begin{rmk}
We also recover \cite[Theorems $C$ and $D$]{zakharevich}. Indeed, suppose $0 \neq \alpha \in K_0(\Var_k)$ satisfies $\mathbb{L} \alpha = 0$. We may lift $\alpha$ to an element $\tilde{\alpha} \in \Kdim$ with $\mathbb{L} \tilde{\alpha} = 0$. But then Lemma \ref{simple lemma} says we may write $\tilde{\alpha} = \tau ( [X]_n - [Y]_n )$, with $X$ and $Y$ smooth and proper of dimension $n$. Thus
\begin{align*}
    0 \neq \mathbb{D}(\tilde{\alpha}) = [X \times \mathbb{A}^1]_{n+1} - [Y \times \mathbb{A}^1]_{n+1}
\end{align*}
implies $[X \times \mathbb{A}^1] = [Y \times \mathbb{A}^1]$ in $K_0(\Var_k)$, but $X \times \mathbb{A}^1$ and $Y \times \mathbb{A}^1$ are not piecewise isomorphic.
\end{rmk}

\textbf{Mixed Hodge modules:}\label{mhm subsection} Let $S$ be a complex variety. The Hodge-Deligne polynomial generalizes to a map
\begin{align*}
    \chi_{S}^c: \KdimS \rightarrow K_0(\MHM(S)), \hspace{.5cm} \chi^c_{S}([X \xrightarrow[]{f} S]_d) = [f_! \mathbb{Q}_X^{H}],
\end{align*}
called the graded motivic Hodge-Grothendieck characteristic. Here, $K_0(\MHM(S))$ is the Grothendieck group of the abelian category of mixed Hodge modules on $S$, and $\mathbb{Q}_X^{H} = (a_X)^*\mathbb{Q}^H$ is the constant Hodge module on $X$. To see that $\chi_S^c$ respects cut-and-paste relations, one utilizes the exact triangle $j_!j^* \rightarrow \id \rightarrow i_* i^* \xrightarrow[]{+1}$ for a closed embedding $i:Y \rightarrow X$.

In fact, the derived category, and hence also the Grothendieck group, of mixed Hodge modules admits a six functor formalism. It is clear that if $\phi:S \rightarrow T$ is a morphism of varieties, then we have the commutativity relations $\chi^c_{T} \phi_! = \phi_! \chi^c_S$ and $\chi^c_S \phi^* = \phi^* \chi^c_T$. The respective dualities also commute up to a Tate twist: $\chi_S^c \circ \mathbb{D} = (\mathbb{D} \circ \chi^c_S)(d+\dim S)$ on degree $d$ elements of $\KdimS$.

\subsection{Spreading Out}
Let $S$ be a base scheme and let $\eta \in S$ be a point with local ring $A = \mathscr{O}_{\eta, S}$

Consider the inverse system $\big(\mathop{\mathrm{Spec}}A_j \rightarrow \mathop{\mathrm{Spec}}A_i \big)_{j \geq i \in I}$ of open affine subschemes $\mathop{\mathrm{Spec}}A_i$ of $S$ containing the point $\eta$. The inverse limit of this system is $\mathop{\mathrm{Spec}}A$. Alternatively, $A$ is the direct limit of the corresponding direct system $\big( A_i \rightarrow A_j \big)_{i \leq j \in I}$ of noetherian rings.

Taking the relative graded Grothendieck groups of the system of open immersions, we obtain a direct system $\big( K_0(\Var_{A_i}^{\dim}) \rightarrow K_0(\Var_{A_j}^{\dim}) \big)_{i \leq j \in I}$, each map of graded degree zero.

\begin{thm}\label{Spreading Out}
The natural graded ring homomorphism
\begin{align*}
    \phi: \varinjlim_{i \in I} K_0(\Var_{A_i}^{\dim}) \rightarrow K_0(\Var_A^{\dim})
\end{align*}
is an isomorphism. Moreover, it restricts to a map $\varinjlim_{i \in I} K_0(\Var_{A_i}^{\spr}) \rightarrow K_0(\Var_A^{\spr})$, which is an isomorphism.
\end{thm}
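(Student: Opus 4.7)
The plan is to reduce the statement to Grothendieck's standard limit theorems (EGA IV.8), exploiting the fact that all the relevant data --- schemes, morphisms, closed embeddings, and the properties of smoothness and properness --- descend from $\Spec A$ to some $\Spec A_i$ once the index $i$ is large enough. Throughout, I use that the $A_i$ are noetherian, so finite type equals finite presentation.

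For surjectivity of $\phi$, given a generator $[X \xrightarrow{f} \Spec A]_d$, EGA IV.8.8.2 produces a finite type $A_i$-scheme $X_i$ and an $A$-isomorphism $X \cong X_i \times_{\Spec A_i} \Spec A$. The point is then to arrange $\dim_{A_i} X_i \leq d$ after possibly enlarging $i$. Because the fiber-dimension function on $\Spec A_i$ is upper semicontinuous (EGA IV.13.1.3), the closed locus $Z_i \subset \Spec A_i$ where some fiber of $X_i$ has dimension $> d$ pulls back to the empty set under $\Spec A \to \Spec A_i$; hence $Z_i$ misses $\Spec A_j$ for some $j \geq i$ in our inverse system of open neighborhoods. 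Then $X_i \times_{A_i} A_j$ defines a valid degree-$d$ generator of $K_0(\Var_{A_j}^{\dim})$ whose image under $\phi$ is $[X]_d$.

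For injectivity, suppose a class $\alpha \in \varinjlim K_0(\Var_{A_i}^{\dim})$ satisfies $\phi(\alpha) = 0$; represent it by some $\alpha_i \in K_0(\Var_{A_i}^{\dim})$. The vanishing of $\phi(\alpha)$ means that the base change of $\alpha_i$ to $A$ is equal, in the free abelian group on generators, to a \emph{finite} $\mathbb{Z}$-linear combination of cut-and-paste relations $[X]_d - [Y]_d - [X \setminus Y]_d$ in $K_0(\Var_A^{\dim})$. Each of the finitely many $A$-schemes and closed immersions occurring in this expression descends to some $\Spec A_j$ with $j \geq i$ by EGA IV.8.8 and IV.8.10.5, and two morphisms agreeing after base change to $A$ already agree after further enlarging $j$ (IV.8.8.2.3). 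The same cut-and-paste relations therefore hold in $K_0(\Var_{A_j}^{\dim})$, so $\alpha_j = 0$ and $\alpha = 0$ in the colimit.

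The restriction to the smooth and proper subrings follows by the same argument, once one invokes that smoothness (EGA IV.17.7.8) and properness (IV.8.10.5) both descend from $A$ to some $A_i$. The main technical point --- and the step I would treat with the most care --- is ensuring that the grading by relative dimension is preserved under descent; this is exactly what the fiber-dimension semicontinuity argument in the surjectivity step accomplishes. Everything else is the standard mechanism of Grothendieck's limit formalism applied to the finitary data defining $K_0(\Var_{(-)}^{\dim})$.
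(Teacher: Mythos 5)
Your high-level strategy — reduce everything to EGA~IV.8 spreading-out — is the same as the paper's, and your injectivity paragraph matches the paper's argument. But your handling of the dimension grading, which you correctly flag as the one delicate step, does not work as written.

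The issue is that you conflate the paper's relative dimension $\dim_S X$ with fiber dimension. Recall that for irreducible $X$ the paper sets $\dim_S X = \operatorname{tr\,deg}(k(X),k(V)) - \operatorname{codim}(V,S)$, where $V = \overline{f(X)}$; the correction term $-\operatorname{codim}(V,S)$ means $\dim_A X$ can be strictly smaller than the dimension of every fiber, even the fiber over $\eta$ itself. A concrete counterexample: take $A = k[s,t]_{(s,t)}$ and $X = \Bl_{\mathfrak{m}}\operatorname{Spec} A$. Here $\dim_A X = 0$ (it is proper and birational onto $\operatorname{Spec}A$), yet the fiber over the closed point $\eta$ is $\mathbb{P}^1$. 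So your assertion that "the closed locus $Z_i$ where some fiber of $X_i$ has dimension $>d$ pulls back to the empty set under $\operatorname{Spec}A \to \operatorname{Spec}A_i$" is false — in this example $Z_i$ contains $\eta$ and can never be excised by shrinking $\operatorname{Spec}A_i$. Your semicontinuity argument therefore does not establish $\dim_{A_j}X_j \leq d$ for any $j$, and the same problem silently re-enters your injectivity step when you assert that the cut-and-paste relations hold in degree $d$ over $A_j$.

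What the paper actually uses (and emphasizes with the remark that the argument would fail for the naive $\dim X_i - \dim A_i$) is that $\dim_S$ is preserved under the localization $\operatorname{Spec}A \to \operatorname{Spec}A_i$. Concretely: for an irreducible component $X_i^\alpha$ of $X_i$ whose image closure $V_i^\alpha$ contains $\eta$, both $\operatorname{tr\,deg}(k(X_i^\alpha),k(V_i^\alpha))$ and $\operatorname{codim}(V_i^\alpha,\operatorname{Spec}A_i)$ are unchanged after pulling back to $\operatorname{Spec}A$ (the latter because height of a prime contained in $\mathfrak{p}_\eta$ is preserved by localizing at $\mathfrak{p}_\eta$), so $\dim_{A_i}X_i^\alpha = \dim_A(X_i^\alpha \times_{A_i}A)$. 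The only shrinking one needs is to discard the finitely many components of $X_i$ whose image closure misses $\eta$, which is harmless since those closures are closed sets avoiding $\eta$. This is the correct replacement for your fiber-dimension argument, and it is exactly what makes both $\phi$ graded of degree zero and the surjectivity step go through. (A minor further point: the $\operatorname{spr}$ subgroups are generated by \emph{regular} proper schemes, not smooth ones; regularity descends here because $X_i$ is excellent, hence J2, so its singular locus is closed and a limit argument over the $X_j = X_i\times_{A_i}A_j$ applies.)
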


See \cite[Section $3.4$]{spreadingout} for an ungraded spreading out principle in the context of an arbitrary direct system of rings. In the graded context, one needs to be more careful keeping track of dimensions. 

\begin{proof}
If $X_i \rightarrow \mathop{\mathrm{Spec}}A_i$ is a finite type morphism with $\dim_{A_i} X_i = d$, then the pullback $X_i \times_{A_i} A \rightarrow A$ also satisfies $\dim_A (X_i \times_{A_i} A) = d$. It follows that the pullback $K_0(\Var_{A_i}^{\dim}) \rightarrow K_0(\Var_A^{\dim})$ and hence also the direct limit $\phi$ are of graded degree zero. Note that this is false if we had used the naive notion of relative dimension $\dim X_i - \dim A_i$ instead.

We now argue as in \cite{spreadingout}. Surjectivity of $\phi$ follows from the existence, for any $X \rightarrow A$, of an $A_i$-model $X_i \rightarrow A_i$ for some $i \in I$ by \cite[IV$.8.8.2$]{ega}.

For injectivity, we use the results that if $X_i$ and $Y_i$ are finite type schemes over $A_i$, then the natural map
\begin{align*}
    \varinjlim_{j \geq i} \Hom_{A_j}(Y_i \times_{A_i} A_j, X_i \times_{A_i} A_j) \rightarrow \Hom_A(Y_i \times_{A_i} A, X_i \times_{A_i} A)
\end{align*}
is a bijection. Moreover, given $f_i: Y_i \rightarrow X_i$, then the induced morphism $f:Y_i \times_{A_i} A \rightarrow X_i \times_{A_i} A$ is a closed (resp. open) immersion if and only if there exists some $j \geq i$ for which the induced morphism $f_j:Y_i \times_{A_i} A_j \rightarrow X_i \times_{A_i} A_j$ is  closed (resp. open immersion), by \cite[IV$.8.8.2$ and IV$.8.10.5$]{ega}.

For the final statement, we need simply note that the map $K_0(\Var_{A_i}^{\dim}) \rightarrow K_0(\Var_A^{\dim})$ sends $K_0(\Var_{A_i}^{\spr})$ to $K_0(\Var_{A}^{\spr})$.
\end{proof}

\subsection{Stacks}

Let $S$ be a base scheme. We introduce a graded Grothendieck group $\KstkS$ of algebraic stacks over $S$, as studied by Ekedahl \cite{ekedahlstacks} in the ungraded situation. The Grothendieck group $\KstkS$ will be a localization of $\KdimS$. 

All stacks in this section are assumed to be of finite type over $S$, with affine automorphism group schemes.

Let $\mathscr{X} \rightarrow S$ be a stack as above. Then $\mathscr{X}$ is locally covered by global quotient stacks $[X_i / GL_{n_i}]$, where $X_i$ is a scheme. We define the relative dimension $\dim_S \mathscr{X}$ to be $\sup_i \big( \dim_S X_i - \dim GL_{n_i})$ for any such covering.

\begin{defn}\label{K0Stk defn}
The graded Grothendieck group $\KstkS$ of stacks over $S$ is the graded abelian group with:
\begin{itemize}
    \item generators in degree $d$ the isomorphism classes $[\mathscr{X}]_d$ of stacks $\mathscr{X}$ of finite type over $S$, with $d \geq \dim_S \mathscr{X}$, and
    \item relations:
        \begin{enumerate}
            \item $\{ \mathscr{X} \}_d = \{ \mathscr{Y} \}_d + \{ \mathscr{X}-\mathscr{Y} \}_d$ is $\mathscr{Y}$ is a closed substack of $X$, and
            \item $\{ \mathscr{E} \}_d = \{ \mathscr{X} \times_S \mathbb{A}^n_S \}_d$ if $\mathscr{E} \rightarrow \mathscr{X}$ is a vector bundle of rank $n$.
        \end{enumerate}
\end{itemize}
\end{defn}

We use brackets in the notation to distinguish between classes in the Grothendieck group and stack quotients $[X/G]$. As usual, we will drop the subscript when the stack is placed in degree equal to its dimension. 

Note that relation $(2)$ is automatic only if we work only with schemes.

Observe, moreover, that $\KstkS$ is naturally a graded $\mathbb{Z}[\tau, \mathbb{L}]$-module, where $\tau$ acts by $\tau \cdot \{ \mathscr{X} \}_d = \{ \mathscr{X} \}_{d+1}$ and $\mathbb{L}$ acts by  $\mathbb{L} \cdot \{ \mathscr{X} \}_d = \{ \mathscr{X} \times_S \mathbb{A}^1_S \}_{d+1}$. If $S$ is of finite type over $k$, then $\KstkS$ is a module over $\Kstk$.

\begin{lem}\label{GLn torsor}
If $\mathscr{Y} \rightarrow \mathscr{X}$ is a $GL_n$-torsor, then $\{ \mathscr{Y} \rightarrow S \} = \{ GL_n \} \cdot \{ \mathscr{X} \rightarrow S \}$ in $\KstkS$. In particular, $\{ BGL_n \} = \{ GL_n \}^{-1}$ in $\Kstk$.
\end{lem}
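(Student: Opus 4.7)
The plan is to realize the $GL_n$-torsor $\mathscr{Y} \to \mathscr{X}$ as the frame bundle of its associated rank-$n$ vector bundle $\mathscr{E} = \mathscr{Y} \times^{GL_n} \mathbb{A}^n$ on $\mathscr{X}$, and then to compute $\{\mathscr{Y} \to S\}$ by stratifying $\mathscr{E}$ and repeatedly applying the vector-bundle relation (2) from Definition \ref{K0Stk defn}. The key point is that relation (2) forces $\{\mathscr{E}' \to S\} = \mathbb{L}^r \cdot \{\mathscr{X}' \to S\}$ for every rank-$r$ vector bundle $\mathscr{E}' \to \mathscr{X}'$, whether $\mathscr{E}'$ is trivial or twisted; this is precisely what will guarantee that the answer depends only on $\{\mathscr{X} \to S\}$, not on the isomorphism class of the torsor.

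Concretely, I would set $\mathscr{Y}_k \subset \mathscr{E} \times_{\mathscr{X}} \cdots \times_{\mathscr{X}} \mathscr{E}$ ($k$ factors) to be the open substack parametrizing $k$-tuples of linearly independent sections of $\mathscr{E}$, with $\mathscr{Y}_0 = \mathscr{X}$ and $\mathscr{Y}_n = \mathscr{Y}$. Over $\mathscr{Y}_{k-1}$ there is a tautological rank-$(k-1)$ subbundle $\mathscr{F}_{k-1}$ of the pullback of $\mathscr{E}$, spanned by the universal $(k-1)$-frame, and the projection $\mathscr{Y}_k \to \mathscr{Y}_{k-1}$ identifies $\mathscr{Y}_k$ with the complement of $\mathscr{F}_{k-1}$ inside this pullback. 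Combining relation (2) with the scissor relation then yields
\begin{align*}
    \{\mathscr{Y}_k \to S\} = (\mathbb{L}^n - \mathbb{L}^{k-1}) \cdot \{\mathscr{Y}_{k-1} \to S\},
\end{align*}
and an easy induction gives $\{\mathscr{Y} \to S\} = \prod_{i=0}^{n-1}(\mathbb{L}^n - \mathbb{L}^i) \cdot \{\mathscr{X} \to S\}$.

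Specializing this formula to the trivial torsor with $\mathscr{X} = S$ and $\mathscr{Y} = GL_n \times S$ identifies the product $\prod_{i=0}^{n-1}(\mathbb{L}^n - \mathbb{L}^i)$ with $\{GL_n\}$, so the main claim $\{\mathscr{Y} \to S\} = \{GL_n\} \cdot \{\mathscr{X} \to S\}$ follows. For the "in particular" statement, I would apply the identity to the universal torsor $\mathop{\mathrm{Spec}}k \to BGL_n$ over $S = \mathop{\mathrm{Spec}}k$: this gives $1 = \{\mathop{\mathrm{Spec}}k\} = \{GL_n\} \cdot \{BGL_n\}$ in $\Kstk$, exhibiting $\{BGL_n\}$ as a multiplicative inverse of $\{GL_n\}$.

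The main technical obstacle I foresee is verifying that the stratification makes sense stack-theoretically, since $\mathscr{X}$ and $\mathscr{Y}$ may be genuinely non-representable. The loci of "linearly independent sections" and the tautological subbundles $\mathscr{F}_{k-1}$ must be constructed functorially, and one has to confirm that $\mathscr{Y}_k \to \mathscr{Y}_{k-1}$ really is the open complement of a subbundle inside a rank-$n$ vector bundle so that relation (2) applies in the stack setting. All of these conditions are smooth-local on $\mathscr{X}$, so after passing to a smooth atlas the constructions reduce to the familiar scheme-theoretic ones; executing this bookkeeping carefully is what will take the most care.
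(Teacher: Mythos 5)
Your approach is essentially identical to the paper's: stratify the frame bundle of the associated vector bundle by loci of $k$-frames, realize each step $\mathscr{Y}_k \to \mathscr{Y}_{k-1}$ as the complement of a rank-$(k-1)$ subbundle in a rank-$n$ bundle, and peel off factors using relation $(2)$. One small correction: the factor you write as $(\mathbb{L}^n - \mathbb{L}^{k-1})$ is inhomogeneous in the graded ring $\KstkS$; the correct graded expression is $(\mathbb{L}^n - \mathbb{L}^{k-1}\tau^{n-k+1})$, because to subtract $\{\mathscr{F}_{k-1}\}$ from $\{\mathscr{E}'\}$ inside the cut-and-paste relation one must place $\{\mathscr{F}_{k-1}\}$ in degree $\dim_S \mathscr{Y}_{k-1}+n$, which introduces a $\tau^{n-k+1}$ shift. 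With this adjustment your product matches the paper's $\{GL_n\} = (\mathbb{L}^n - \tau^n)(\mathbb{L}^n - \mathbb{L}\tau^{n-1}) \cdots (\mathbb{L}^n - \mathbb{L}^{n-1}\tau)$, and the rest of the argument, including the deduction $\{BGL_n\} = \{GL_n\}^{-1}$ from the universal torsor, goes through verbatim.
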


\begin{proof}
There is a sequence of fibrations $\mathscr{Y} = \mathscr{Y}_n \rightarrow \mathscr{Y}_{n-1} \ldots \rightarrow \mathscr{Y}_1 \rightarrow \mathscr{Y}_0 = \mathscr{X}$, where $\mathscr{Y}_k$ is the stack of $k$ linearly independent vector bundles in the vector bundle associated to the $GL_n$-torsor. Then $\mathscr{Y}_k \rightarrow \mathscr{Y}_{k-1}$ is the complement of a vector subbundle of rank $k-1$ inside a vector bundle over $\mathscr{Y}_{k-1}$ of rank $n$. Using relation $(2)$ repeatedly, we obtain
\begin{align*}
    \{ \mathscr{Y} \} = (\mathbb{L}^n - \tau^n) \cdot (\mathbb{L}^n - \mathbb{L} \tau^{n-1}) \ldots \cdot (\mathbb{L}^n - \mathbb{L}^{n-1} \tau) \cdot \{ X \} = \{ GL_n \} \cdot \{ X \}.
\end{align*}
\end{proof}

\begin{prop}\label{localization}
There is a natural isomorphism
\begin{align*}
    \KdimS \otimes_{\mathbb{Z}[\tau, \mathbb{L}]} \mathbb{Z}[\tau, \mathbb{L}, [GL_n]_{n \geq 1}^{-1}] \rightarrow \KstkS,
\end{align*}
where $[GL_n] = (\mathbb{L}^n - \tau^n) \cdot \ldots (\mathbb{L}^n - \mathbb{L}^{n-1}\tau)$.
\end{prop}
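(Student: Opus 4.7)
The plan is to construct mutually inverse graded $\mathbb{Z}[\tau,\mathbb{L}]$-module homomorphisms $\Psi$ and $\Phi$ between the two sides. In one direction, the inclusion of $S$-schemes into $S$-stacks induces a natural graded homomorphism $\KdimS \to \KstkS$, $[X \to S]_d \mapsto \{X \to S\}_d$, which manifestly respects cut-and-paste. To extend it across the localization, I need each $[GL_n]$ to act invertibly on $\KstkS$. I would verify this by applying Lemma \ref{GLn torsor} to the universal $GL_n$-torsor $S \to (BGL_n)_S$, which exhibits $\{(BGL_n)_S \to S\}$ as a two-sided inverse of $[GL_n]$. Degree bookkeeping is automatic since $[GL_n]$ has graded degree $n^2$ and $\dim_S (BGL_n)_S = -n^2$.

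For the inverse $\Phi: \KstkS \to \KdimS[[GL_n]_{n \geq 1}^{-1}]$, the key geometric input, due essentially to Ekedahl \cite{ekedahlstacks}, is that any algebraic stack $\mathscr{X}$ of finite type over $S$ with affine automorphism group schemes admits a stratification by locally closed substacks each of the form $[X_\alpha / GL_{n_\alpha}]$ for a scheme $X_\alpha$. Granted this, I set
\begin{align*}
    \Phi(\{\mathscr{X}\}_d) = \sum_\alpha \frac{[X_\alpha]_d}{[GL_{n_\alpha}]}.
\end{align*}
Independence from the stratification follows by passing to a common refinement and using noetherian induction, as in the proof of Theorem \ref{bittner presentation}. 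Independence from the chosen presentation of a given global quotient is the delicate point: comparing $[X/GL_n] \cong [X'/GL_m]$ through the fiber product $W = X \times_{[X/GL_n]} X'$, I would use that $W \to X$ and $W \to X'$ are $GL_m$- and $GL_n$-torsors respectively, both Zariski-locally trivial because $GL_n$ is special in Serre's sense. This gives $[W] = [GL_m][X] = [GL_n][X']$ in $\KdimS$, so $[X]/[GL_n] = [X']/[GL_m]$ after localization.

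Checking that $\Phi$ respects relations $(1)$ and $(2)$ in Definition \ref{K0Stk defn} is then straightforward: cut-and-paste is handled by refining the stratification, while for a rank-$n$ vector bundle $\mathscr{E} \to \mathscr{X}$ I would reduce to $\mathscr{X} = [X/GL_m]$, in which case $\mathscr{E} = [E/GL_m]$ for a rank-$n$ vector bundle on the scheme $X$; Zariski local triviality gives $[E] = \mathbb{L}^n [X]$ in $\KdimS$, producing the required identity $\Phi(\{\mathscr{E}\}) = \mathbb{L}^n \Phi(\{\mathscr{X}\})$. The verification that $\Psi \circ \Phi$ and $\Phi \circ \Psi$ are identities then reduces to inspection on generators, invoking Lemma \ref{GLn torsor} once more.

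The hard part will be the stratification-by-global-quotients statement underpinning $\Phi$: the assumption of affine automorphism group schemes is essential, and the argument requires a careful reduction using gerbe structure and the fact that any stack with affine stabilizers is \'etale-locally a $GL_n$-quotient of a scheme, followed by a noetherian induction to globalize. Once this ingredient is in place, all remaining verifications are formal.
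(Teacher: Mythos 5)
Your proposal follows the paper's proof essentially verbatim: the forward map is extended across the localization using Lemma \ref{GLn torsor} to invert $[GL_n]$ via $\{(BGL_n)_S\}$, and the inverse is defined on a stratification by global quotients $[X_\alpha/GL_{n_\alpha}]$ and sent to $[X_\alpha]/[GL_{n_\alpha}]$; the paper delegates all the well-definedness verifications to Ekedahl \cite{ekedahlstacks}, whereas you unpack them (common-refinement argument, the fiber-product trick for comparing two quotient presentations using specialness of $GL_n$, compatibility with the vector-bundle relation). One small degree-bookkeeping slip in your formula for $\Phi$: you write $[X_\alpha]_d/[GL_{n_\alpha}]$, which lands in degree $d - n_\alpha^2$; since $\dim_S [X_\alpha/GL_{n_\alpha}] = \dim_S X_\alpha - n_\alpha^2 \le d$, the correct generator is $[X_\alpha]_{d+n_\alpha^2}$, so the formula should read $\Phi(\{\mathscr{X}\}_d) = \sum_\alpha [X_\alpha]_{d+n_\alpha^2}/[GL_{n_\alpha}]$, which then sits in degree $d$ as required.
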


\begin{proof}
Consider the natural map $\KdimS \rightarrow \KstkS$ of $\mathbb{Z}[\tau, \mathbb{L}]$-modules. We can extend it to 
\begin{align*}
    \KdimS \otimes_{\mathbb{Z}[\tau, \mathbb{L}]}, \mathbb{Z}[\tau, \mathbb{L}, [GL_n]_{n \geq 1}^{-1}] \rightarrow \KstkS
\end{align*}
by having $[GL_n]_{n \geq 1}^{-1}$ act as multiplication by $\{ B GL_n \}$ by Lemma \ref{GLn torsor}.

Any stack can be stratified by global quotient stacks $[X/GL_n]$ where $X$ is a scheme, so we need only define the inverse morphism on such classes. Send $\{ [X/GL_n] \}$ to $[X]/[GL_n]$. For precise details showing this is well-defined, see \cite[Theorem $1.2$]{ekedahlstacks}.
\end{proof}

We write the result of Proposition \ref{localization} as
\begin{align*}
    \KstkS \cong (T')^{-1} \KdimS,
\end{align*}
where $T'$ is the multiplicative set generated by $\mathbb{L}$, $(\mathbb{L}-\tau)^2$, and $[\mathbb{P}^n] = \tau^n + \ldots + \mathbb{L}^n$ for all $n \geq 1$. This is equivalent to the Proposition \ref{localization} by examining the factorization of $\{ GL_n \}$ in the proof of Lemma \ref{GLn torsor}.

Additionally inverting $\tau$ yields
\begin{align*}
    U^{-1} \KstkS \cong T^{-1} \KdimS,
\end{align*}
where $T$ is generated by $\tau \mathbb{L}$, $(\mathbb{L}-\tau)^2$, and $[\mathbb{P}^n] = \tau^n + \ldots + \mathbb{L}^n$ for all $n \geq 1$, and $U$ is generated by $\tau$.

The multiplicative set $T$ is symmetric in the classes $\tau$ and $\mathbb{L}$, so the involution $\mathbb{D}$ extends to its localization. In fact, $U^{-1} \KstkS \cong T^{-1} \KdimS$ is a quadratic extension of $T^{-1} \KsprS$:
\begin{align*}
    U^{-1} \KstkS \cong T^{-1} \KdimS \cong T^{-1} \KsprS \otimes_{\mathbb{Z}[\tau+\mathbb{L}, \tau \mathbb{L}]} \mathbb{Z}[\tau, \mathbb{L}].
\end{align*}

\subsection{Deligne-Mumford Stacks}\label{DM section}
By definition, $[X] \in T^{-1} \KsprS$ whenever $X$ is a regular scheme, proper over $S$. Our goal in the present subsection is to extend this containment to classes of regular Deligne-Mumford stacks $\mathscr{X}$, proper over $S$.

We accomplish this by realizing $\KstkS$ as a localization of a graded Grothendieck group $\KdmS$ of Deligne-Mumford stacks, exactly as for $\KdimS$.

\begin{defn}\label{K0DM defn}
The graded Grothendieck group $\KdmS$ of Deligne-Mumford stacks over $S$ is the graded abelian group with:
\begin{itemize}
    \item generators in degree $d$ the isomorphism classes $[\mathscr{X}]_d$ of Deligne-Mumford stacks $\mathscr{X}$ of finite type over $S$, with $d \geq \dim_S \mathscr{X}$, and
    \item relations:
        \begin{enumerate}
            \item $\{ \mathscr{X} \}_d = \{ \mathscr{Y} \}_d + \{ \mathscr{X}-\mathscr{Y} \}_d$ is $\mathscr{Y}$ is a closed substack of $X$, and
            \item $\{ \mathscr{E} \}_d = \{ \mathscr{X} \times_S \mathbb{A}^n_S \}_d$ if $\mathscr{E} \rightarrow \mathscr{X}$ is a vector bundle of rank $n$.
        \end{enumerate}
\end{itemize}
\end{defn}
An ungraded Grothendieck group $K_0'(\DM_k^{\dim})$ of Deligne-Mumford stacks is studied in \cite{berghK0} and \cite{bgll}, where the authors do not impose relation $(2)$.

The group $\KdmS$ is a module over $\mathbb{Z}[\tau, \mathbb{L}]$. We find a Bittner presentation for $\KdmS$, but only after modding out by $(\mathbb{L}-\tau)$-torsion elements.

\begin{thm}\label{K0DMbit}
Up to $(\mathbb{L}-\tau)$-torsion, $\KdmS$ admits a presentation with:
\begin{itemize}
    \item generators in degree $d$ the isomorphism classes $\{\mathscr{X}\}_d$ of regular, connected Deligne-Mumford stacks of finite type and proper over $S$, with $d \geq \dim_S \mathscr{X}$, and
    \item relations:
        \begin{enumerate}[label=(\arabic{enumi}')]
            \item $\{ \Bl_\mathscr{Y} \mathscr{X} \}_d - \{ E\}_d = \{\mathscr{X}\}_d - \{\mathscr{Y}\}_d$, where $\Bl_\mathscr{Y} \mathscr{X}$ is a stacky blow-up of $\mathscr{X}$ along a regular and connected $\mathscr{Y}$, with exceptional divisor $E$.
            \item $\{ \mathbb{P}(\mathscr{E}) \}_d = \{\mathscr{X} \times_S \mathbb{P}^{n-1}_S \}_d$ if $\mathscr{E} \rightarrow \mathscr{X}$ is a vector bundle of rank $n$.
        \end{enumerate}
\end{itemize}
\end{thm}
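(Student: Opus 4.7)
The plan is to mimic the proof of Theorem \ref{bittner presentation} step by step, with the same factorization $K_0(\DM_S^{\bit}) \xrightarrow{\Psi} K_0(\DM_S^{\reg}) \xrightarrow{\Phi} \KdmS$, where $K_0(\DM_S^{\reg})$ is the graded Grothendieck group defined exactly as in Definition \ref{K0DM defn} but with generators required to be regular DM stacks. I would then show each of $\Psi$ and $\Phi$ is an isomorphism modulo $(\mathbb{L}-\tau)$-torsion. The geometric inputs needed (resolution of singularities and weak factorization) both have DM-stack analogues, which is what makes the plan viable; the $(\mathbb{L}-\tau)$-torsion caveat measures the gap between these analogues and their variety counterparts.

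For Step 1, I would construct $\Phi^{-1}$ by stratifying an arbitrary DM stack $\mathscr{X}$ of finite type over $S$ into regular locally closed substacks and setting $\Phi^{-1}([\mathscr{X}]_d) = \sum_i [\mathscr{X}_i]_d$. Since $S$ is excellent, the regular locus of $\mathscr{X}$ is open, so such a stratification exists. Independence of the choice of stratification is proved exactly as in Theorem \ref{bittner presentation} via common refinements, and the compatibility with cut-and-paste relations is formal. This step requires no torsion concession.

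For Step 2, I would construct $\Psi^{-1}$ by choosing a regular proper compactification $\overline{\mathscr{X}}$ of a regular DM stack $\mathscr{X}$, with snc boundary $D = \bigcup_{i \in I} D_i$, and setting
\begin{align*}
\Psi^{-1}([\mathscr{X}]_d) = \sum_{J \subset I} (-1)^{|J|} [D_J]_d.
\end{align*}
Existence of such a compactification follows from Nagata compactification for DM stacks combined with Temkin's resolution of singularities for qe stacks of characteristic zero. To verify well-definedness, I would invoke the stacky weak factorization theorem of Harper (and Abramovich--Temkin), which lets us reduce to comparing two compactifications that differ by a single stacky blow-up along a regular center having normal crossings with the boundary, or by a root stack operation along a component of the snc boundary. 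Blow-ups are handled exactly as in the variety case using relation (1'); the projective bundle formula (2') takes care of the exceptional divisor in the same way as Corollary \ref{bit module}. Compatibility with the cut-and-paste relation for a regular closed substack $\mathscr{Y} \subset \mathscr{X}$ follows by the same blow-up computation as in the variety case.

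The main obstacle, and the source of the $(\mathbb{L}-\tau)$-torsion, is controlling the contribution of root stacks to the alternating sum $\sum_J (-1)^{|J|}[D_J]_d$. A root stack $\overline{\mathscr{X}}' \to \overline{\mathscr{X}}$ of order $n$ along a component $D_i$ of the snc boundary produces a new stack whose boundary strata differ from those of $\overline{\mathscr{X}}$ by $\mu_n$-gerbes along substacks of $D_i$ and its intersections $D_J$. The difference in the alternating sums can then be expressed as a combination of classes of $\mu_n$-gerbes over regular substacks minus the classes of those substacks themselves. Following Ekedahl's computation of classes of classifying stacks in characteristic zero, such a gerbe class differs from the base by an element whose class lies in the kernel of multiplication by a power of $(\mathbb{L}-\tau)$; equivalently, it is annihilated by $(\mathbb{L}-\tau)^N$ for some $N$. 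Working modulo $(\mathbb{L}-\tau)$-torsion therefore kills these contributions, giving a well-defined $\Psi^{-1}$. The rest of the argument, including verifying that $\Psi \circ \Psi^{-1}$ and $\Psi^{-1} \circ \Psi$ are the identity modulo torsion, is then formal as in Theorem \ref{bittner presentation}.
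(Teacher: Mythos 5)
The student takes a genuinely different route from the paper, and the diagnosis of where the $(\mathbb{L}-\tau)$-torsion enters is different.

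The paper's argument first strips away the vector bundle relation $(2)$ and the projective bundle relation $(2')$ entirely: it introduces primed groups $K_0'(\DM_S^{\dim})$ and $K_0'(\DM_S^{\bit})$ defined without these relations and observes that for \emph{these} groups the Bittner-style isomorphism $\Theta'$ is an honest isomorphism with no torsion concession, citing Bergh (or redoing the argument of Theorem \ref{bittner presentation} for stacks). All of the $(\mathbb{L}-\tau)$-torsion is then isolated in one explicit calculation: relation $(2')$ implies relation $(2)$ exactly, whereas $(2)$ only gives $(\mathbb{L}-\tau)\cdot\{\mathbb{P}(\mathscr{E})\} = (\mathbb{L}-\tau)\cdot\{\mathscr{X}\times\mathbb{P}^{n-1}\}$, via the blow-up of the zero section $\Bl_0(\mathscr{E})\to\mathbb{P}(\mathscr{E})$.

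Your proposal instead keeps the vector bundle relation in place throughout (in $K_0(\DM_S^{\reg})$ and $\KdmS$) and locates the torsion exclusively in the root stack steps of weak factorization. This misidentifies, or at least underidentifies, the source. The most basic problem is already the forward map $\Psi:K_0(\DM_S^{\bit})\to K_0(\DM_S^{\reg})$: for it to be well-defined you must check that relation $(2')$ holds in $K_0(\DM_S^{\reg})$, and it does not --- that failure is exactly the $(\mathbb{L}-\tau)$ computation the paper carries out. Your discussion is silent on this and only addresses $\Psi^{-1}$. Moreover, the gerbe claim you invoke (``such a gerbe class differs from the base by $(\mathbb{L}-\tau)^N$-torsion'') is essentially Lemma \ref{B unity}, which is proved using the vector bundle relation $(2)$; but $(2)$ is not available in the target $K_0(\DM_S^{\bit})$ of $\Psi^{-1}$, where only $(1')$ and $(2')$ hold, so you would first need the paper's implication $(2')\Rightarrow(2)$ --- the very step you did not isolate. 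And in fact the paper's treatment shows that root stack steps of weak factorization do \emph{not} by themselves introduce torsion: in the primed groups they are accounted for by the stacky blow-up relation $(1')$ alone, which is why Bergh's isomorphism needs no torsion qualifier. So the torsion is not where you claim. Your plan might be made to work with substantially more care, but the paper's factoring through the primed groups is both cleaner and the correct way to localize the source of the torsion.
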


\begin{proof}
Denote by $K_0(\DM_S^{\bit})$ the graded abelian group with presentation as above. 

Let $K_0'(\DM_S^{\dim})$ and $K_0'(\DM_S^{\bit})$ be the analogous Grothendieck groups, defined without imposing relations $(2)$ and $(2')$. We claim that there is an isomorphism
\begin{align*}
   \Theta': K_0'(\DM_S^{\dim}) \rightarrow K_0'(\DM_S^{\bit}).
\end{align*}
This is essentially due to Bergh \cite[Theorem $1.1$]{berghK0}, except we need to keep track of dimension and in the relative setting. Alternatively, one can follow the argument of Theorem \ref{bittner presentation}. It works in this modified context, as versions of resolution of singularities, and the weak factorization theorem remain true for Deligne-Mumford stacks.

We now identify $K_0'(\DM_S^{\dim})$ with $K_0'(\DM_S^{\bit})$ and show conditions $(2)$ and $(2')$ are equivalent, up to $(\mathbb{L}-\tau)$-torsion. Let $\mathscr{E} \rightarrow \mathscr{X}$ be vector bundle of rank $n$. 

First we assume condition $(2')$. Then
\begin{align*}
    \{ \mathscr{E} \} =& \{ \mathbb{P}(\mathscr{E} \oplus 1) \} - \tau \cdot \{ \mathbb{P}(\mathscr{E}) \} \\
    =& ( \{ \mathbb{P}^n \} - \tau \cdot \{ \mathbb{P}^{n-1} \}) \cdot \{ \mathscr{X} \} \\
    =& \mathbb{L}^n \cdot \{ X \} \\
    =& \{ \mathscr{X} \times \mathbb{A}^n \},
\end{align*}
where we drop subscripts and gradings from the notation as convenient.

Now we assume condition $(2)$. There is a rank one vector bundle $\Bl_0 (\mathscr{E}) \rightarrow \mathbb{P}(\mathscr{E})$ obtained by blowing up the zero section of $\mathscr{E}$. We have
\begin{align*}
    \mathbb{L} \cdot \{ \mathbb{P}(\mathscr{E}) \} = \{ \Bl_0(\mathscr{E}) \} = \{ \mathscr{E} \}  - \tau^n \cdot \{ \mathscr{X} \} +  \tau \cdot \{ \mathbb{P}(\mathscr{E}) \}.
\end{align*}
Rearranging, we deduce
\begin{align*}
    (\mathbb{L}-\tau) \cdot \{ \mathbb{P}(\mathscr{E}) \} =& \{ \mathscr{E} \}  - \tau^n \cdot \{ \mathscr{X} \}\\ =& (\mathbb{L}^n - \tau^n) \cdot \{ \mathscr{X} \} = (\mathbb{L}-\tau) \cdot \{ \mathscr{X} \times \mathbb{P}^{n-1} \}.
\end{align*}
\end{proof}

\begin{cor}
Up to $\mathbb{Z}[\tau, \mathbb{L}]$-torsion, $\KdmS$ admits a presentation with:
\begin{itemize}
    \item generators in degree $d$ the isomorphism classes $[\mathscr{X}]$ of smooth, connected Deligne-Mumford stacks, which are proper, of finite type, and of relative dimension $d$ over $S$, and
    \item relations
        \begin{align*}
            [\Bl_\mathscr{Y} \mathscr{X}] = [\mathscr{X}] +\tau \mathbb{L}\cdot (\tau^{k-1} + \ldots +\mathbb{L}^{k-1}) \cdot [\mathscr{Y}],
        \end{align*}
            where $\Bl_\mathscr{Y} \mathscr{X}$ is a stacky blow-up of $\mathscr{X}$ along a regular and connected $\mathscr{Y}$ of codimension $k+1$ with exceptional divisor $E$.
        \begin{align*}
            [\mathbb{P}(\mathscr{E})] = (\tau^{k}+ \tau^{k-1} \mathbb{L} +  \ldots + \mathbb{L}^{k}) \cdot [\mathscr{Y}]
        \end{align*} 
        for $\mathscr{E} \rightarrow \mathscr{Y}$ a vector bundle of rank $k+1$.
\end{itemize}
\end{cor}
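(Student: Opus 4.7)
The plan is to mimic the proof of Corollary \ref{bit module} essentially verbatim, using Theorem \ref{K0DMbit} in place of Theorem \ref{bittner presentation}. The only new wrinkle is that Theorem \ref{K0DMbit} is itself valid only modulo $(\mathbb{L}-\tau)$-torsion, which naturally forces the ``up to $\mathbb{Z}[\tau,\mathbb{L}]$-torsion'' qualifier in the conclusion.

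Concretely, let $K_0(\DM_S^{\tau,\mathbb{L}})$ denote the $\mathbb{Z}[\tau,\mathbb{L}]$-module defined by the presentation in the statement. I would first construct
\begin{align*}
    \Theta: \KdmS \longrightarrow K_0(\DM_S^{\tau,\mathbb{L}}), \qquad \{\mathscr{X}\}_d \longmapsto [\mathscr{X}]\cdot \tau^{d-\dim_S\mathscr{X}},
\end{align*}
defined on the Bittner generators of Theorem \ref{K0DMbit}. Exactly the same algebraic manipulation used in Corollary \ref{bit module} (writing $[E]=(\tau^k+\cdots+\mathbb{L}^k)[\mathscr{Y}]$ via the projective bundle relation and rearranging) shows that $\Theta$ respects relation $(1')$. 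For relation $(2')$, applying the projective bundle relation in the target module to both $[\mathbb{P}(\mathscr{E})]$ and $[\mathscr{X}\times_S\mathbb{P}^{n-1}_S]$ produces the same expression $(\tau^{n-1}+\cdots+\mathbb{L}^{n-1})[\mathscr{X}]$ up to a power of $\tau$. Thus $\Theta$ descends to a well-defined $\mathbb{Z}[\tau,\mathbb{L}]$-module map, modulo the $(\mathbb{L}-\tau)$-torsion already allowed by Theorem \ref{K0DMbit}.

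Next I would construct an inverse $\Theta^{-1}: K_0(\DM_S^{\tau,\mathbb{L}}) \to \KdmS$ by $[\mathscr{X}] \mapsto \{\mathscr{X}\}_{\dim_S\mathscr{X}}$ on generators. The blow-up relation in the source unpacks, as in Corollary \ref{bit module}, to
\begin{align*}
    \{\Bl_\mathscr{Y}\mathscr{X}\}_d - \{\mathscr{X}\}_d - \bigl(\tau\cdot(\tau^k+\cdots+\mathbb{L}^k) - \tau^{k+1}\bigr)\cdot\{\mathscr{Y}\}_{d-k-1} = \{\Bl_\mathscr{Y}\mathscr{X}\}_d - \{\mathscr{X}\}_d - \{E\}_d + \{\mathscr{Y}\}_d,
\end{align*}
which vanishes by relation $(1')$ of Theorem \ref{K0DMbit}. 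The projective bundle relation $\Theta^{-1}([\mathbb{P}(\mathscr{E})]) = (\tau^k+\cdots+\mathbb{L}^k)\cdot\Theta^{-1}([\mathscr{Y}])$ translates directly into relation $(2')$, which in $\KdmS$ holds modulo $(\mathbb{L}-\tau)$-torsion. That $\Theta$ and $\Theta^{-1}$ are mutual inverses modulo $\mathbb{Z}[\tau,\mathbb{L}]$-torsion is immediate from their definitions on generators.

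The only substantive obstacle is bookkeeping: one must check that the relations imposed in the target module $K_0(\DM_S^{\tau,\mathbb{L}})$ are all consequences of those in $\KdmS$ after passing to the quotient by $\mathbb{Z}[\tau,\mathbb{L}]$-torsion, and conversely. No new geometric input beyond Theorem \ref{K0DMbit} and the algebraic identities already exploited in Corollary \ref{bit module} is needed; in particular, stacky resolution of singularities and stacky weak factorization have already been absorbed into the Bittner presentation. The argument is therefore a routine adaptation, with the torsion caveat being a direct inheritance from the stacky Bittner presentation.
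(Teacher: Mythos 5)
Your proposal misses the key geometric input in the paper's proof. You assert ``No new geometric input beyond Theorem \ref{K0DMbit} and the algebraic identities already exploited in Corollary \ref{bit module} is needed,'' but this is precisely where the stacky setting diverges from the scheme-theoretic one. The term ``stacky blow-up'' in both Theorem \ref{K0DMbit} and the Corollary means a composition of an ordinary smooth blow-up with a root construction along the exceptional divisor. The exceptional divisor $E$ of such a stacky blow-up is therefore not a projective bundle $\mathbb{P}(N_{\mathscr{Y}/\mathscr{X}})$ over $\mathscr{Y}$, but a $\mu_r$-gerbe over one. Relation (2') of Theorem \ref{K0DMbit} (the projective bundle formula) does not directly apply to it, so the ``same algebraic manipulation'' you invoke — substituting $\{E\} = (\tau^k+\cdots+\mathbb{L}^k)\{\mathscr{Y}\}$ into relation (1') — is not licensed without further argument. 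The issue is already visible in the codimension-one case: a root construction along a divisor $\mathscr{Y}$ is itself a stacky blow-up, and the Corollary's relation with $k=0$ demands $\{\Bl_\mathscr{Y}\mathscr{X}\} = \{\mathscr{X}\}$, i.e. that root constructions do not change the class. This is a genuinely new fact.

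The paper handles it by a local computation: over the divisor, the root construction looks (after reduction) like $D \times B\mu_r$, so the desired invariance reduces to Lemma \ref{B unity}, that $\{B\mu_r\} = 1$ in $\Kdm$ up to $(\mathbb{L}-\tau)$-torsion. That lemma is the extra geometric ingredient the Corollary needs and your proposal omits. Without it, the passage from the Bittner relations of Theorem \ref{K0DMbit} to the $\mathbb{Z}[\tau,\mathbb{L}]$-module relations of the Corollary cannot be completed. To repair your argument you would need to explicitly treat the root-construction component of stacky blow-ups, observe that its exceptional divisor is a gerbe rather than a projective bundle, and invoke (or prove) Lemma \ref{B unity} to conclude that gerbe classes agree with scheme classes modulo $(\mathbb{L}-\tau)$-torsion.
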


\begin{proof}
We follow the same argument relating $K_0(\DM_S^{\bit})$ to the above presentation as in Corollary \ref{bit module}. The only difference is that stacky blow-ups can consist of a composition not only of a smooth blow-up, but also of a root construction in the exceptional divisor. We need to show that root constructions do not alter classes in $K_0(\DM_S^{\dim})$, up to $(\mathbb{L}-\tau)$-torsion.

Consider an $r$th root construction $\pi: \tilde{\mathscr{X}} \rightarrow \mathscr{X}$ of a divisor $D$ in $\mathscr{X}$, with preimage $\tilde{D} = \pi^{-1}(D)$. The morphism $\pi$ is an isomorphism away from $D$.  We analyze what happens near $D$ by a local computation. Suppose $D = (f=0)$; then
\begin{align*}
    \tilde{\mathscr{X}} = [\mathop{\mathrm{Spec}} \big( \mathscr{O}_\mathscr{X}[t]/(t^r - f) \big) / \mu_r ],
\end{align*}
where a generator of $\mu_r$ acts as multiplication by a $r$th root of unity on $t$.

Over $D = (f=0)$, this looks like
\begin{align*}
    [\mathop{\mathrm{Spec}} \big( \mathscr{O}_\mathscr{X}[t]/t^r \big) / \mu_r ] = D \times [\mathop{\mathrm{Spec}} \big( k[t]/t^r \big) / \mu_r],
\end{align*}
which has reduction $D \times B \mu_r$, where $B \mu_r$ is the classifying stack of the cyclic group $\mu_r$ of order $r$. As reduction does not change classes in the Grothendieck group, it remains to show that $\{ B \mu_r \} = 1$ in $\Kdm$, up to $(\mathbb{L}-\tau)$-torsion. This is precisely the content of Lemma \ref{B unity}.
\end{proof}

\begin{lem}\label{B unity}
  $\{ B \mu_r \} = 1$ in $\Kdm$, up to $(\mathbb{L}-\tau)$-torsion. 
\end{lem}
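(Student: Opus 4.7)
The plan is to exhibit a relation forcing $(\mathbb{L}-\tau) \cdot (\{B\mu_r\} - 1) = 0$ in $\Kdm$ by computing the class of the quotient stack $[\mathbb{A}^1/\mu_r]$ in two different ways, where $\mu_r$ acts on $\mathbb{A}^1$ by scalar multiplication by $r$th roots of unity. This uses both the cut-and-paste relation (1) and the vector bundle relation (2) from Definition \ref{K0DM defn}, and the interaction between them is exactly what produces the desired identity.

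First, I would stratify $[\mathbb{A}^1/\mu_r]$ along the origin. Since $\mu_r$ acts freely on $\mathbb{G}_m$, the stack quotient $[\mathbb{G}_m/\mu_r]$ is a scheme, and via the $r$th power map $z \mapsto z^r$ it is identified with $\mathbb{G}_m$. The closed complement is $[0/\mu_r] = B\mu_r$. Applying relation (1) in degree $1$ yields
\begin{align*}
    \{ [\mathbb{A}^1/\mu_r] \}_1 \;=\; \{ \mathbb{G}_m \}_1 + \{ B\mu_r \}_1 \;=\; (\mathbb{L} - \tau) + \tau \cdot \{ B\mu_r \}.
\end{align*}

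Next, I would observe that $[\mathbb{A}^1/\mu_r] \to B\mu_r$ is the line bundle associated to the tautological character $\mu_r \hookrightarrow \mathbb{G}_m$. Relation (2) in Definition \ref{K0DM defn} then gives
\begin{align*}
    \{ [\mathbb{A}^1/\mu_r] \}_1 \;=\; \{ B\mu_r \times \mathbb{A}^1 \}_1 \;=\; \mathbb{L} \cdot \{ B\mu_r \}.
\end{align*}
Equating the two expressions produces
\begin{align*}
    \mathbb{L} \cdot \{ B\mu_r \} \;=\; (\mathbb{L}-\tau) + \tau \cdot \{ B\mu_r \},
\end{align*}
which rearranges to $(\mathbb{L} - \tau) \cdot ( \{ B\mu_r \} - 1 ) = 0$, exactly the statement of the lemma.

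The only point that requires a moment of care is verifying that $[\mathbb{G}_m/\mu_r] \cong \mathbb{G}_m$ as a Deligne-Mumford stack and that $[\mathbb{A}^1/\mu_r] \to B\mu_r$ is indeed a rank one vector bundle in the sense used by relation (2); both assertions follow from standard descent, using that the $\mu_r$-action on $\mathbb{G}_m$ is free and that $\mathbb{A}^1$ is the total space of the character representation of $\mu_r$. I do not expect any real obstacle beyond keeping track of the graded degrees correctly when applying relations (1) and (2).
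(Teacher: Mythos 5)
Your proof is correct and is essentially identical to the one in the paper: both compute $\{[\mathbb{A}^1/\mu_r]\}$ in two ways, once as $\mathbb{L}\cdot\{B\mu_r\}$ via the vector bundle relation and once as $(\mathbb{L}-\tau)+\tau\cdot\{B\mu_r\}$ via the stratification along the origin, then rearrange. Your write-up merely unpacks the identifications $[\mathbb{G}_m/\mu_r]\cong\mathbb{G}_m$ and $[\mathbb{A}^1/\mu_r]\to B\mu_r$ that the paper uses implicitly.
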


\begin{proof}
We compute
\begin{align*}
    \mathbb{L} \cdot \{ B \mu_r \} = \{ [\mathbb{A}^1 / \mu_r] \} = (\mathbb{L}-\tau) + \tau \cdot \{ B \mu_r \}.
\end{align*}
We obtain the result by rearranging.
\end{proof}

Let $K_0(\DM_S^{\spr})$ be the subgroup of $\KdmS$ generated by the classes $[\mathscr{X}]$ of smooth Deligne-Mumford stacks $\mathscr{X}$, proper over $S$. It is a module over $\mathbb{Z}[\tau+\mathbb{L}, \tau \mathbb{L}]$. We deduce the following structural result, exactly as in Theorem \ref{str iso}.

\begin{thm}\label{str iso dm}
There exists a natural isomorphism
\begin{align*}
    K_0(\DM_S^{\spr}) \otimes_{\mathbb{Z}[\tau+ \mathbb{L}]} \mathbb{Z}[\tau, \mathbb{L}] \rightarrow  K_0(\DM_S^{\dim}),
\end{align*}
up to $(\mathbb{L}-\tau)$-torsion.
\end{thm}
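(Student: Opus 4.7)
The plan is to imitate the argument used for the schematic analogue, namely the combination of Corollary \ref{bit module} and Theorem \ref{str iso}, substituting the Deligne--Mumford Bittner presentation in the corollary following Theorem \ref{K0DMbit} for the one used there. The situation is essentially already set up: the corollary presents $\KdmS$, modulo $(\mathbb{L}-\tau)$-torsion, as a $\mathbb{Z}[\tau, \mathbb{L}]$-module generated by classes $[\mathscr{X}]$ of smooth, connected, proper-over-$S$ DM stacks placed in degree $\dim_S \mathscr{X}$, subject only to the blow-up relation and the projective-bundle relation. The crucial point is that both of these relations lie in $K_0(\DM_S^{\spr})$ and have coefficients $\tau\mathbb{L} \cdot (\tau^{k-1}+\cdots+\mathbb{L}^{k-1})$ and $\tau^k + \tau^{k-1}\mathbb{L} + \cdots + \mathbb{L}^k$ which are symmetric in $\tau$ and $\mathbb{L}$, hence lie in $\mathbb{Z}[\tau+\mathbb{L}, \tau\mathbb{L}]$.

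First I would construct the natural map
\[
\Phi : K_0(\DM_S^{\spr}) \otimes_{\mathbb{Z}[\tau+\mathbb{L}, \tau\mathbb{L}]} \mathbb{Z}[\tau, \mathbb{L}] \longrightarrow K_0(\DM_S^{\dim}),
\qquad [\mathscr{X}] \otimes 1 \longmapsto [\mathscr{X}]_{\dim_S \mathscr{X}},
\]
with $\tau$ and $\mathbb{L}$ acting in the usual way on the target. This is well defined because the defining relations of $K_0(\DM_S^{\spr})$, viewed as identities among classes in degree $\dim_S \mathscr{X}$, are exactly (after multiplying by appropriate powers of $\tau$ to shift degrees) among those imposed in $\KdmS$. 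The action of $\mathbb{Z}[\tau+\mathbb{L}, \tau\mathbb{L}]$ on $K_0(\DM_S^{\spr})$ extends to the full action of $\mathbb{Z}[\tau, \mathbb{L}]$ on $\KdmS$ since $\mathbb{Z}[\tau, \mathbb{L}]$ is a free $\mathbb{Z}[\tau+\mathbb{L}, \tau\mathbb{L}]$-module of rank $2$ (with basis $1, \tau$).

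Next I would construct the inverse $\Phi^{-1}$, working modulo $(\mathbb{L}-\tau)$-torsion. Using the corollary after Theorem \ref{K0DMbit}, every generator in the target is of the form $[\mathscr{X}]_d$ with $\mathscr{X}$ smooth, connected, proper of relative dimension $d$ over $S$; send such a generator to $[\mathscr{X}] \otimes 1$, and more generally send $\tau^a \mathbb{L}^b \cdot [\mathscr{X}]_d$ to $[\mathscr{X}] \otimes \tau^a \mathbb{L}^b$. Well-definedness requires checking that the blow-up and projective-bundle relations in $\KdmS$ go to valid relations in $K_0(\DM_S^{\spr}) \otimes \mathbb{Z}[\tau, \mathbb{L}]$; since the coefficients are symmetric polynomials in $\tau$ and $\mathbb{L}$, the relations already hold in $K_0(\DM_S^{\spr})$ after we clear the common shift by an appropriate power of $\tau$, and the tensoring with $\mathbb{Z}[\tau, \mathbb{L}]$ only extends coefficient scalars. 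The two maps are mutually inverse on generators by direct verification.

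The main obstacle is not conceptual but bookkeeping: the corollary after Theorem \ref{K0DMbit} is itself only stated up to $(\mathbb{L}-\tau)$-torsion, a defect inherited from Lemma \ref{B unity} and the use of root constructions in stacky weak factorization. Thus both $\Phi$ and $\Phi^{-1}$ are well defined only modulo $(\mathbb{L}-\tau)$-torsion, and one must verify that the $(\mathbb{L}-\tau)$-torsion subgroups of source and target correspond under the map. This is automatic because $(\mathbb{L}-\tau)^2 = (\tau+\mathbb{L})^2 - 4\tau\mathbb{L}$ lies in $\mathbb{Z}[\tau+\mathbb{L}, \tau\mathbb{L}]$, so multiplication by $(\mathbb{L}-\tau)$ commutes with the tensor product and the same elements are killed on both sides. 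Once this is noted, the isomorphism follows exactly as in Theorem \ref{str iso}.
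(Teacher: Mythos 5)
Your proposal is correct and matches the paper's own argument, which is the one-line deduction ``exactly as in Theorem~\ref{str iso}'' from the Deligne--Mumford analogue of Corollary~\ref{bit module} (the corollary following Theorem~\ref{K0DMbit}), with the $(\mathbb{L}-\tau)$-torsion caveat inherited from Lemma~\ref{B unity} and the root constructions in stacky weak factorization. You have also implicitly corrected a typo in the theorem statement: the tensor product should be taken over $\mathbb{Z}[\tau+\mathbb{L},\tau\mathbb{L}]$ rather than $\mathbb{Z}[\tau+\mathbb{L}]$, consistent with Theorem~\ref{str iso}.
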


There exist natural morphisms
\begin{align*}
    \KdimS \rightarrow \KdmS \rightarrow \KstkS.
\end{align*}
Inverting by the multiplicative set $T'$ generated by $\mathbb{L}$, $(\mathbb{L}-\tau)^2$, and $[\mathbb{P}^n]$ for $n \geq 1$, we obtain morphisms
\begin{align*}
    (T')^{-1}\KdimS \rightarrow (T')^{-1}\KdmS \rightarrow \KstkS.
\end{align*}
Using Proposition \ref{localization}, it is straightforward to check these are all isomorphisms.

Inverting further by $\tau$, we obtain isomorphisms
\begin{align*}
    T^{-1}\KdimS \rightarrow T^{-1}\KdmS \rightarrow U^{-1}\KstkS.
\end{align*}
Thus, $T^{-1}\KdimS \cong T^{-1}\KdmS$ is a quadratic extensions of both $T^{-1}\KsprS$ and $T^{-1} K_0(\DM_S^{\spr})$. As the former is clearly contained in the latter, we obtain an isomorphism
\begin{align*}
    T^{-1}\KsprS \cong T^{-1} K_0(\DM_S^{\spr}).
\end{align*}

\begin{cor}\label{DM in spr}
If $\mathscr{X}$ is a regular Deligne-Mumford stack, proper and of finite type over $S$, then the class $\{ \mathscr{X} \} \in \KstkS$ lies in the submodule $T^{-1} \KsprS$. 
\end{cor}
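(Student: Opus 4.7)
The plan is to observe that this corollary is essentially a direct consequence of the chain of isomorphisms
\begin{align*}
    T^{-1}\KsprS \cong T^{-1} K_0(\DM_S^{\spr}) \hookrightarrow T^{-1}\KdmS \cong U^{-1}\KstkS,
\end{align*}
established in the discussion immediately preceding the statement. Specifically, since $\mathscr{X}$ is a regular Deligne--Mumford stack proper over $S$, by the very definition of $K_0(\DM_S^{\spr})$ the class $\{\mathscr{X}\}$ lies in this subgroup of $\KdmS$. Passing to the localization, $\{\mathscr{X}\}$ lies in the submodule $T^{-1} K_0(\DM_S^{\spr})$ of $T^{-1}\KdmS$.

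The key step is to invoke the isomorphism $T^{-1}\KsprS \cong T^{-1} K_0(\DM_S^{\spr})$ already obtained. Unpacking its proof: both groups are identified with the ``$\mathbb{D}$-invariant'' summand of the single quadratic extension $T^{-1}\KdimS \cong T^{-1}\KdmS$, because Theorems \ref{str iso} and \ref{str iso dm} exhibit
\begin{align*}
    T^{-1}\KdimS &\cong T^{-1}\KsprS \oplus \tau \cdot T^{-1}\KsprS, \\
    T^{-1}\KdmS &\cong T^{-1}K_0(\DM_S^{\spr}) \oplus \tau \cdot T^{-1}K_0(\DM_S^{\spr}),
\end{align*}
with both summands compatibly identified under $T^{-1}\KdimS \cong T^{-1}\KdmS$. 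The inclusion $T^{-1}\KsprS \subset T^{-1}K_0(\DM_S^{\spr})$ of first summands combined with the equality of the ambient quadratic extensions forces equality of the two subrings. Under this identification, $\{\mathscr{X}\}$, which sits as a ``first-summand'' element on the Deligne--Mumford side, is also represented by an element of $T^{-1}\KsprS$ on the variety side.

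The interpretation of ``$T^{-1}\KsprS$ as a submodule of $\KstkS$'' in the statement should be made precise by viewing both inside the common localization $U^{-1}\KstkS = T^{-1}\KdimS$: the image of $\{\mathscr{X}\}$ under $\KstkS \to U^{-1}\KstkS$ lies in the image of $T^{-1}\KsprS \hookrightarrow T^{-1}\KdimS$. The main (and essentially only) obstacle has already been overcome in the preceding pages, namely establishing that $T^{-1}\KdmS$ admits a Bittner-style presentation (Theorem \ref{K0DMbit}) with the same quadratic relation as $T^{-1}\KdimS$; once that is in place, the corollary is a one-line consequence of tracking $\{\mathscr{X}\}$ through these canonical decompositions.
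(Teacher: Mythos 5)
Your proposal is correct and follows essentially the same approach as the paper: the corollary is deduced immediately from the identification $T^{-1}\KsprS \cong T^{-1}K_0(\DM_S^{\spr})$, which the paper establishes in the paragraph just before the corollary by observing that $T^{-1}\KdimS \cong T^{-1}\KdmS$ is a quadratic extension of both subrings and that one subring contains the other. Your slight elaboration in terms of tracking the direct-sum decompositions (or equivalently the $\mathbb{D}$-fixed subrings) is the same reasoning made explicit.
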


\section{Gluing Map}\label{gluing map}

\subsection{Construction}

Let $X$ be a scheme and let $i:Y \rightarrow X$ be a closed embedding with complementary open embedding $j: U \rightarrow X$.

It is straightforward to verify that there is a split short exact sequence of graded abelian groups
\begin{center}
\begin{tikzcd}
    K_0(\Var_U^{\dim}) \arrow{r}{j_!} & K_0(\Var_X^{\dim}) \arrow{r}{i^*} \arrow[bend left=33]{l}{j^*}  & K_0(\Var_Y^{\dim}) \arrow[bend left=33]{l}{i_!}.
\end{tikzcd}
\end{center}
Here, $j_!$ and $j^*$ are of graded degree zero and $i^*$ and $i_!$ are of graded degree $k$ and $-k$, respectively, where $k = \codim_X Y$.

The behavior of the smooth and proper submodules with respect to this diagram is more interesting.

Observe that $i_!(K_0(\Var_Y^{\spr})) \subset K_0(\Var_X^{\spr})$ and $j^* \big( K_0(\Var_X^{\spr} \big)) \subset K_0(\Var_U^{\spr})$. The analogous results fail for $j_!$ and $i^*$.

\begin{defn}
The gluing morphism $\mathfrak{g}_{U}^Y: K_0(\Var_U^{\spr}) \rightarrow K_0(\Var_Y^{\spr})$ of $Y$ to $U$ via $X$ is the composition
\begin{align*}
    K_0(\Var_U^{\spr}) \xrightarrow[]{j_!} K_0(\Var_X^{\dim}) \xrightarrow[]{-\pi_2} K_0(\Var_X^{\spr}) \xrightarrow[]{i^*} K_0(\Var_Y^{\spr}).
\end{align*}
Then $\mathfrak{g}_{U}^D$ is of graded degree $k-1$. It is $\mathbb{Z}[\tau+ \mathbb{L}, \tau \mathbb{L}]$-linear and, moreover, $\Kspr$-linear if $X$ is of finite type over $k$.
\end{defn}
By definition, we have
\begin{align*}
    K_0(\Var_X^{\spr}) = \big\{ \hspace{.1cm} j_!(\alpha) + i_!(\mathfrak{g}_U^Y(\alpha) + \beta) \hspace{.15cm} | \hspace{.15cm} \alpha \in K_0(\Var_U^{\spr}), \hspace{.1cm} \beta \in K_0(\Var_Y^{\spr}) \hspace{.1cm} \big\}.
\end{align*}
The involution $\mathbb{D}_X$ on $K_0(\Var_X^{\dim})$ is therefore also determined by the involutions $\mathbb{D}_U$ and $\mathbb{D}_Y$, together with the gluing morphism $\mathfrak{g}_{U}^Y$.

\begin{defn}
The (absolute) gluing morphism $\mathfrak{g}_U: K_0(\Var_U^{\spr}) \rightarrow K_0(\Var_k^{\spr})$ of a scheme $U$ of finite type over $k$ is the composition
\begin{align*}
    K_0(\Var_U^{\spr}) \xrightarrow[]{(a_U)_!} \Kdim \xrightarrow[]{-\pi_2} \Kspr,
\end{align*}
where $a_U:U \rightarrow \mathop{\mathrm{Spec}}k$.
\end{defn}

The relative gluing morphism $\mathfrak{g}_U^Y$ is a refinement of the absolute gluing morphism $\mathfrak{g}_U$, in the following sense.

\begin{prop}\label{gluing refinement}
If $j:U \rightarrow X$ is an open embedding with $Y = X-U$ and $X$ proper, then
\begin{align*}
    (a_Y)_! \circ \mathfrak{g}_U^Y = \mathfrak{g}_U,
\end{align*}
where $a_X: X \rightarrow \mathop{\mathrm{Spec}}k$.
\end{prop}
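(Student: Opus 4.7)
The plan is to unfold both sides of the equation, exploit properness of $X$, and use the split short exact sequence relating $j_!, j^*, i_!, i^*$. Since $a_U = a_X \circ j$ and $a_Y = a_X \circ i$, functoriality of $(-)_!$ rewrites the claim, for $\beta \in K_0(\Var_U^{\spr})$, as
\begin{align*}
    (a_X)_! \, i_! \, i^* \, (-\pi_2)\, j_!\, \beta \;=\; -\pi_2 \, (a_X)_! \, j_! \, \beta,
\end{align*}
where the instances of $\pi_2$ denote the respective projections on $K_0(\Var_X^{\dim})$, $\Kdim$, and (later) $K_0(\Var_U^{\dim})$.

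First I would verify that $(a_X)_!$ commutes with $\pi_2$. Since $X$ is proper, Lemma \ref{proper} gives $(a_X)_!\bigl(K_0(\Var_X^{\spr})\bigr) \subset \Kspr$, and $(a_X)_!$ is $\Kdim$-linear, in particular $\mathbb{Z}[\tau,\mathbb{L}]$-linear. For any $\alpha \in K_0(\Var_X^{\dim})$, pushing the unique decomposition $\alpha = \pi_1(\alpha) + \tau \pi_2(\alpha)$ with $\pi_i(\alpha) \in K_0(\Var_X^{\spr})$ forward therefore yields the unique $\Kspr$-decomposition of $(a_X)_!\alpha$, so $\pi_2 \circ (a_X)_! = (a_X)_! \circ \pi_2$. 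The same reasoning, applied to $j^*$, gives $\pi_2 \circ j^* = j^* \circ \pi_2$: the pullback $j^*$ is $\mathbb{Z}[\tau,\mathbb{L}]$-linear of degree zero and preserves the smooth+proper submodule, since $[V \to X] \in K_0(\Var_X^{\spr})$ pulls back to $[V \times_X U \to U]$ with $V \times_X U$ open in the smooth $V$ and proper over $U$ by base change.

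With these two commutations in hand, I would apply the splitting $\id = j_! j^* + i_! i^*$ on $K_0(\Var_X^{\dim})$ to write $i_! i^* = \id - j_! j^*$, and expand the left-hand side as
\begin{align*}
    (a_X)_!(-\pi_2) j_! \beta \;-\; (a_X)_! j_! j^*(-\pi_2) j_! \beta.
\end{align*}
The first term equals $-\pi_2 (a_X)_! j_! \beta = -\pi_2 (a_U)_!\beta = \mathfrak{g}_U \beta$, which is the desired right-hand side. The second term is $(a_U)_!(-\pi_2) j^* j_! \beta = (a_U)_!(-\pi_2)\beta$ by $j^* j_! = \id$, and this vanishes because $\beta \in K_0(\Var_U^{\spr})$ satisfies $\pi_2 \beta = 0$.

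I do not anticipate a serious obstacle: the argument reduces entirely to the two commutations $\pi_2 (a_X)_! = (a_X)_! \pi_2$ (which uses properness of $X$ via Lemma \ref{proper}) and $\pi_2 j^* = j^* \pi_2$, both of which follow formally from $\mathbb{Z}[\tau,\mathbb{L}]$-linearity together with preservation of the smooth+proper submodule. The only subtle point is to use the hypothesis $\beta \in K_0(\Var_U^{\spr})$ (and not merely $\beta \in K_0(\Var_U^{\dim})$) at the very end to kill the correction term coming from the $j_! j^*$ summand of the splitting.
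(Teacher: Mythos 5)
Your proof is correct and follows essentially the same route as the paper: commute $\pi_2$ with $(a_X)_!$ via properness and Lemma \ref{proper}, then reduce to the fact that $-\pi_2 j_! \beta$ lies in $\im(i_!)$, where $i^* i_!=\id$. The paper simply asserts the containment $\im(-\pi_2 j_!) \subset \im(i_!)$, whereas you unpack it by expanding $i_! i^* = \id - j_! j^*$ and using $\pi_2 j^* = j^* \pi_2$ together with $\pi_2\beta = 0$; that is the same observation made explicit, not a different argument.
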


\begin{proof}
We need to analyze the diagram
\[
\begin{tikzcd}
  K_0(\Var_U^{\spr}) \arrow[r, "j_!"] \arrow[dr,bend right, "(a_U)_!"] & K_0(\Var_X^{\dim}) \arrow[r, "-\pi_2"] \arrow[d, "(a_X)_!"] & K_0(\Var_X^{\spr}) \arrow[r, "i^*"] \arrow[d, "(a_X)_!"] & K_0(\Var_Y^{\spr}) \arrow[dl, bend left, "(a_Y)_!"] \\
  & \Kdim \arrow[r, "-\pi_2"] & \Kspr
\end{tikzcd}
\]
The first triangle is clearly commutative and the rectangle is commutative by Lemma \ref{proper}, as $X$ is proper. The second rectangle is commutative when restricted to $\im( -\pi_2 \circ j_!) \subset \im(i_!)$ because $i^* i_! = \id$. The result follows.
\end{proof}

\subsection{Applications}

In this subsection, we parse the geometric data captured by gluing morphisms $\mathfrak{g}_U^Y$. Our results can be obtained more directly by careful arguments relying on the weak factorization theorem. It is our hope, however, that the present systematic approach is more illuminating.

We start with the identity $\mathbbm{1}_U$ of a smooth variety $U$.

\begin{prop}\label{genl gluing application}
Let $U$ be a smooth variety and let $U \rightarrow X$ be an open embedding, with $D = X - U$. If $f: \Tilde{X} \rightarrow X$ is proper and an isomorphism over $U$, with $\Tilde{X}$ smooth and $f^{-1}(D) = \cup_{i \in I} \Tilde{D}_i$ an snc divisor, then
\begin{align*}
    \mathfrak{g}_U^D(\mathbbm{1}_U) = \sum_{\emptyset \neq J \subset I} (-1)^{|J|-1} [\Tilde{D}_J \times \mathbb{P}^{|J|-1}] \in K_0(\Var_D^{\spr}),
\end{align*}
where $\tilde{D}_J = \cap_{j \in J} \tilde{D}_j$ for $J \subset I$.
\end{prop}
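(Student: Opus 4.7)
The plan is to unwind the definition $\mathfrak{g}_U^D = i^* \circ (-\pi_2) \circ j_!$ applied to $\mathbbm{1}_U = [U \xrightarrow{\id} U]_0$, and to use the resolution $f: \tilde X \to X$ to rewrite $j_!(\mathbbm{1}_U) = [U \to X]_0$ in terms of the smooth strata $\tilde D_J$.

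First I would apply the scissor relation for the closed embedding $f^{-1}(D) \hookrightarrow \tilde X$ to write
\begin{align*}
[U \to X]_0 \;=\; [\tilde X \to X]_0 \;-\; [f^{-1}(D) \to X]_0,
\end{align*}
using that $f$ is an isomorphism over $U$. Since $f^{-1}(D) = \cup_{i \in I} \tilde D_i$ is an snc divisor, an inclusion-exclusion applied inside $K_0(\Var_X^{\dim})$ in degree zero yields
\begin{align*}
[f^{-1}(D) \to X]_0 \;=\; \sum_{\emptyset \neq J \subset I} (-1)^{|J|-1} [\tilde D_J \to X]_0.
\end{align*}

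The second step is to compute $\pi_2$ of each term. The variety $\tilde X$ is smooth and proper over $X$ of relative dimension $0$, so $[\tilde X \to X]_0 \in K_0(\Var_X^{\spr})$ and $\pi_2$ kills it. For a nonempty $J$, the stratum $\tilde D_J$ is smooth and proper over $X$ of relative dimension $-|J|$, so that
\begin{align*}
[\tilde D_J \to X]_0 \;=\; \tau^{|J|} \cdot [\tilde D_J],
\end{align*}
where $[\tilde D_J] = [\tilde D_J \to X]_{-|J|} \in K_0(\Var_X^{\spr})$. The key technical input is that $\pi_2$ is $\mathbb{Z}[\tau+\mathbb{L}, \tau \mathbb{L}]$-linear (via Theorem \ref{str iso}), and multiplication by a class in $K_0(\Var_X^{\spr})$ on the right also commutes with $\pi_1, \pi_2$ by the direct sum decomposition. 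Combining this with Example \ref{P^k example}, which gives $\pi_2(\tau^{|J|}) = [\mathbb{P}^{|J|-1}]$, I get
\begin{align*}
\pi_2([\tilde D_J \to X]_0) \;=\; [\mathbb{P}^{|J|-1}] \cdot [\tilde D_J] \;=\; [\tilde D_J \times \mathbb{P}^{|J|-1} \to X]_{-1}.
\end{align*}

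Finally I would apply $i^*$. Since every $\tilde D_J$ is sent by $f$ into $D$, the base change $\tilde D_J \times \mathbb{P}^{|J|-1} \times_X D$ equals $\tilde D_J \times \mathbb{P}^{|J|-1}$ itself, and $i^*$ raises graded degree by $1$. This gives
\begin{align*}
i^* \pi_2([\tilde D_J \to X]_0) \;=\; [\tilde D_J \times \mathbb{P}^{|J|-1} \to D]_0 \;\in\; K_0(\Var_D^{\spr}).
\end{align*}
Assembling everything with the sign $-\pi_2$ and the inclusion-exclusion signs yields the claimed formula. There is no real obstacle here beyond bookkeeping of graded degrees; the only point that needs care is the identification $[\tilde D_J \to X]_0 = \tau^{|J|} [\tilde D_J]$ and the compatibility of $\pi_2$ with multiplication by classes of $K_0(\Var_X^{\spr})$, which is precisely what Theorem \ref{str iso} provides.
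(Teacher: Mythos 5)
Your proposal is correct and follows essentially the same path as the paper: both proofs rest on the inclusion–exclusion identity $[U \to X] = \sum_{J \subset I}(-1)^{|J|}\tau^{|J|}[\tilde D_J]$ and then compute $\pi_2$ via Example \ref{P^k example}. The paper just writes out the resulting $\pi_1 + \tau\pi_2$ decomposition of $[U\to X]$ in a single display rather than isolating the scissor, inclusion–exclusion, and $\pi_2$-linearity steps as you do.
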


\begin{proof}
This follows immediately from the following decomposition of $[U \rightarrow X]$ into its smooth and proper components inside $K_0(\Var_X^{\dim})$, using Example \ref{P^k example}:
\begin{align*}
    [U \rightarrow X] =& \sum_{J \subset I} (-1)^{|J|} \tau^{|J|} [\Tilde{D}_J] \\
    =& \bigg( \sum_{J \subset I} (-1)^{|J|-1} \tau \mathbb{L} \cdot [\Tilde{D}_J \times \mathbb{P}^{|J|-2}] \bigg) \\ &- \tau \cdot \bigg( \sum_{\emptyset \neq J \subset I} (-1)^{|J|-1} [\Tilde{D}_J \times \mathbb{P}^{|J|-1}] \bigg).
\end{align*}
\end{proof}

\begin{cor}[cf. Theorem \ref{intro glue thm}]\label{paper glue thm}
Let $U$ be a smooth variety. Choose a smooth compactification $X$ of $U$ whose boundary $D = X - U$ is an snc divisor $D = \cup_{i \in I} D_i$. Then
\begin{align*}
    \mathfrak{g}_U(\mathbbm{1}_U) = \sum_{\emptyset \neq J \subset I} (-1)^{|J|-1} [D_J \times \mathbb{P}^{|J|-1}] \in \Kspr.
\end{align*}
In particular, this class depends only on $U$.
\end{cor}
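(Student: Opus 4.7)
The plan is to deduce this corollary directly from Proposition \ref{genl gluing application} combined with Proposition \ref{gluing refinement}. Both ingredients are already in place, so the argument should be essentially a bookkeeping exercise that exploits the fact that the absolute gluing morphism $\mathfrak{g}_U$ is defined purely in terms of $U$, while the formula on the right-hand side is manifestly computed from the chosen compactification.

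First I would apply Proposition \ref{genl gluing application} with $\tilde{X} = X$ and $f = \mathrm{id}_X$. The hypotheses are satisfied: $X$ is smooth, the identity is proper and an isomorphism over $U$, and $f^{-1}(D) = D = \cup_{i \in I} D_i$ is snc by assumption. This yields
\begin{align*}
    \mathfrak{g}_U^D(\mathbbm{1}_U) = \sum_{\emptyset \neq J \subset I} (-1)^{|J|-1} [D_J \times \mathbb{P}^{|J|-1} \to D] \in K_0(\Var_D^{\spr}).
\end{align*}

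Next, since $X$ is a compactification (hence proper), Proposition \ref{gluing refinement} applies and gives $\mathfrak{g}_U = (a_D)_! \circ \mathfrak{g}_U^D$. Pushing the displayed identity forward along $a_D: D \to \mathop{\mathrm{Spec}} k$ and using that $(a_D)_!$ restricts to a map $K_0(\Var_D^{\spr}) \to \Kspr$ (Lemma \ref{proper}), we obtain
\begin{align*}
    \mathfrak{g}_U(\mathbbm{1}_U) = \sum_{\emptyset \neq J \subset I} (-1)^{|J|-1} [D_J \times \mathbb{P}^{|J|-1}] \in \Kspr,
\end{align*}
which is the desired formula.

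Finally, the independence of the choice of smooth snc compactification is automatic: the left-hand side $\mathfrak{g}_U(\mathbbm{1}_U)$ is defined intrinsically from $U$ alone, namely as $-\pi_2 \circ (a_U)_!$ applied to $\mathbbm{1}_U$, so any expression equal to it on the nose must agree for all valid choices of $(X, D)$. There is no real obstacle here; the only subtlety worth flagging is that the formula in Proposition \ref{genl gluing application} is stated for the relative gluing morphism, so one must remember to push forward to recover the absolute invariant, which is why the hypothesis that $X$ is proper (and not merely $\tilde{X} \to X$ proper) enters through Proposition \ref{gluing refinement}.
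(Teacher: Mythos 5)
Your proof is correct and is exactly the intended derivation: the paper states this result as a corollary with no separate proof precisely because it follows by specializing Proposition \ref{genl gluing application} to $\tilde X = X$, $f = \mathrm{id}$, and then pushing forward via Proposition \ref{gluing refinement} using properness of $X$. Your observation about why properness of $X$ (as opposed to just $\tilde X \to X$) is needed, and why independence of the compactification is automatic from the intrinsic definition of $\mathfrak g_U$, are both accurate.
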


\begin{cor}\label{sm boundary}
Suppose a smooth variety $U$ admits two smooth compactifications with smooth boundary divisors $D$ and $E$, respectively. Then $[D] = [E] \in \Kspr$. Hence, $D$ and $E$ are birational and have the same Hodge numbers.
\end{cor}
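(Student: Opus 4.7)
The plan is to obtain this immediately from Corollary \ref{paper glue thm}, by noting that the smoothness hypothesis on $D$ forces the alternating sum in that formula to collapse to a single term.

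First I would observe that a smooth divisor in a smooth variety has irreducible components that are pairwise disjoint: any point of $D_i \cap D_j$ with $i \neq j$ would be a non-normal (hence singular) point of $D$. Writing $I = \{1, \ldots, n\}$ for the index set of irreducible components of $D$, this means $D_J = \bigcap_{j \in J} D_j = \emptyset$ whenever $|J| \geq 2$, so only the singleton terms contribute to the formula of Corollary \ref{paper glue thm}. Since $\mathbb{P}^{|J|-1} = \mathop{\mathrm{Spec}}k$ for $|J|=1$, this gives
\begin{align*}
    \mathfrak{g}_U(\mathbbm{1}_U) = \sum_{i=1}^n [D_i] = [D] \in \Kspr.
\end{align*}
The same computation applied to the second compactification yields $\mathfrak{g}_U(\mathbbm{1}_U) = [E]$, and since the gluing class depends only on $U$, one concludes $[D] = [E]$ in $\Kspr$.

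To deduce the two stated consequences, I would push this equality along the natural motivic measures from Subsection \ref{motivic measures}. For birational equivalence, composing $\Kspr \hookrightarrow \Kdim$ with the quotient $\Kdim \twoheadrightarrow \mathbb{Z}[\Bir_k]$ (modding out by $\tau$) sends a smooth proper class to the sum of the birational equivalence classes of its connected components; since $D$ and $E$ are smooth proper of dimension $\dim U$, the equality $[D]=[E]$ in $\mathbb{Z}[\Bir_k]$ forces a bijection between their connected components respecting birational equivalence. For the Hodge numbers, after base changing to $\mathbb{C}$ if necessary, I would apply the Hodge--Deligne polynomial $H \colon K_0(\Var_\mathbb{C}) \to \mathbb{Z}[u,v]$; since both $D$ and $E$ are smooth and projective, the identity $H([D]) = H([E])$ recovers $h^{p,q}(D) = h^{p,q}(E)$ for all $p,q$. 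There is essentially no obstacle to this argument: the content is entirely in Corollary \ref{paper glue thm}, and the work reduces to the combinatorial vanishing of higher-order intersection terms under the smoothness hypothesis on the boundary.
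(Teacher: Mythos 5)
Your proof is correct and matches the paper's intent: the corollary is stated without proof, as an immediate consequence of Corollary \ref{paper glue thm}, and your observation that the smoothness of $D$ forces $D_J = \emptyset$ for $|J| \geq 2$ (so the alternating sum collapses to $\sum_i [D_i] = [D]$) is exactly the point. The deduction of the two consequences via the quotient $\Kdim \twoheadrightarrow \mathbb{Z}[\Bir_k]$ and the Hodge--Deligne measure is also the standard route envisioned by the paper.
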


Let's pause to give a few examples of distinct compactifications with smooth boundary divisors.

\begin{exmp}
A standard flip cuts a smooth divisor isomorphic to a $\mathbb{P}^k$-bundle over $\mathbb{P}^\ell$ out of a smooth proper variety $X$ and replaces it with a new smooth divisor isomorphic to a $\mathbb{P}^\ell$-bundle over $\mathbb{P}^k$. As predicted by the Corollary \ref{sm boundary}, these divisors have the same class $[\mathbb{P}^k \times \mathbb{P}^\ell]$ in $K_0(\Var_k^{\dim})$.
\end{exmp}

\begin{exmp}
Let $U$ be the smooth toric threefold whose maximal dimensional cones are the $\mathbb{R}_{>0}$-linear spans of
\begin{align*}
    v_1 v_2 v_5, \hspace{.2cm} v_2 v_3 v_5, \hspace{.2cm} v_3 v_4 v_5, \hspace{.2cm} v_1 v_4 v_5, 
\end{align*}
where $v_1=e_1$, $v_2=e_2$, $v_3=-e_1$, $v_4=-e_2+e_3$, and $v_5=-e_3$ inside the standard lattice $\mathbb{Z}^3$.

For each integer $a \in \mathbb{Z}$, there is a smooth toric compactification $X_a$ of $U$ which adds new maximal dimensional cones spanned by
\begin{align*}
    v_1 v_2 v_6, \hspace{.2cm} v_2 v_3 v_6, \hspace{.2cm} v_3 v_4 v_6, \hspace{.2cm} v_1 v_4 v_6, 
\end{align*}
where $v_6 = -ae_1+e_3$.

One computes that the boundary divisor $D_a$ is the Hirzebruch surface $\mathbb{P}(\mathscr{O}_{\mathbb{P}^1} \oplus \mathscr{O}_{\mathbb{P}^1}(a))$. These Hirzebruch surfaces all represent to same class $(\mathbb{L}+\tau)^2$ in $K_0(\Var_k^{\dim})$ but are pairwise non-isomorphic.
\end{exmp}

If we allow $U$ to have singularities, the situation becomes more complicated. Nevertheless, we have the following result.

\begin{prop}
Let $U$ be a variety and let $V \rightarrow U$ be a resolution of $U$. If $X$ is a smooth compactification of $V$ with snc boundary $D = \cup_{i \in I} D_i$, then
\begin{align*}
  \mathfrak{g}_U(\pi_1[U]) = \sum_{\emptyset \neq J \subset I} (-1)^{|J|-1} [D_J \times \mathbb{P}^{|J|-1}] \in \Kspr/(\tau \mathbb{L}) = \mathbb{Z}[\Bir_k].
\end{align*}
In particular, this value depends only on $U$.
\end{prop}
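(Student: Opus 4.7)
The plan is to reduce to the smooth case of Corollary \ref{paper glue thm} by showing that $\pi_1(\mathbbm{1}_U)$ agrees with $[V \to U]$ modulo the ideal $(\tau \mathbb{L})$ in $K_0(\Var_U^{\spr})$. Once that is in place, applying $\mathfrak{g}_U$ to $[V \to U]$ is the same as applying $\mathfrak{g}_V$ to $\mathbbm{1}_V$, and we can invoke Corollary \ref{paper glue thm} directly for the smooth variety $V$.

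First, I would set up the decomposition. Let $W \subset U$ be a closed subscheme outside which $V \to U$ is an isomorphism (e.g.\ the singular locus), and let $E = V \times_U W \subset V$ be the corresponding exceptional locus. Since $V - E \cong U - W$ as $U$-schemes, the cut-and-paste relations give
\begin{align*}
    \mathbbm{1}_U - [V \to U] = [W \to U]_0 - [E \to U]_0 \quad \text{in } K_0(\Var_U^{\dim}).
\end{align*}
Both $E$ and $W$ have relative dimension $\leq -1$ over $U$, so the right-hand side lies in the submodule $\tau \cdot K_0(\Var_U^{\dim})$.

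Next, I would compute the effect of $\pi_1$ on $\tau$-multiples. By Theorem \ref{str iso}, $K_0(\Var_U^{\dim})$ is a free $\mathbb{Z}[\tau+\mathbb{L}, \tau\mathbb{L}]$-module on the basis $\{1, \tau\}$ over $K_0(\Var_U^{\spr})$, and $\pi_1$ is the projection onto the first summand. Writing $\tau^k = a_k + \tau b_k$ with $a_k, b_k \in \mathbb{Z}[\tau+\mathbb{L}, \tau \mathbb{L}]$, the relation $\tau^2 = (\tau+\mathbb{L})\tau - \tau\mathbb{L}$ yields the recursion $a_{k+1} = -(\tau\mathbb{L}) \cdot b_k$, whence $a_k \in (\tau\mathbb{L})$ for every $k \geq 1$. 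Therefore, for any $\alpha \in K_0(\Var_U^{\spr})$ and $k \geq 1$,
\begin{align*}
    \pi_1(\tau^k \cdot \alpha) = a_k \cdot \alpha \in (\tau \mathbb{L}) \cdot K_0(\Var_U^{\spr}).
\end{align*}
Applied to the previous identity, this gives $\pi_1(\mathbbm{1}_U) \equiv [V \to U] \pmod{\tau \mathbb{L}}$.

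Finally, since $\mathfrak{g}_U$ is $\mathbb{Z}[\tau+\mathbb{L}, \tau\mathbb{L}]$-linear, I would push through the congruence and use $(a_U)_!([V \to U]) = [V] = (a_V)_!(\mathbbm{1}_V)$ to obtain
\begin{align*}
    \mathfrak{g}_U(\pi_1[U]) \equiv \mathfrak{g}_U([V \to U]) = -\pi_2([V]) = \mathfrak{g}_V(\mathbbm{1}_V) \pmod{\tau \mathbb{L}}.
\end{align*}
Corollary \ref{paper glue thm}, applied to the smooth variety $V$ with compactification $X$ and snc boundary $D = \cup_i D_i$, evaluates the last expression to the desired sum. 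The final sentence of the proposition is then automatic, since the left-hand side $\mathfrak{g}_U(\pi_1[U])$ manifestly depends only on $U$. The only point requiring real care is the $\pi_1$-calculation for $\tau$-multiples, but it reduces cleanly to the quadratic relation.
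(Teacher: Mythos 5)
The proposal is correct and takes essentially the same approach as the paper. The paper's proof asserts in one line that $\pi_1[U] \equiv [V]$ modulo $\tau\mathbb{L}$ in $K_0(\Var_U^{\spr})$ and then invokes the $\Kspr$-linearity of $\mathfrak{g}_U$ together with Corollary \ref{paper glue thm}; your proposal supplies exactly this reasoning, filling in the two pieces the paper leaves implicit (the cut-and-paste decomposition $\mathbbm{1}_U - [V\to U] = [W\to U]_0 - [E\to U]_0 \in \tau\cdot K_0(\Var_U^{\dim})$, and the observation via the quadratic relation $\tau^2 = (\tau+\mathbb{L})\tau - \tau\mathbb{L}$ that $\pi_1$ of any $\tau$-multiple lands in $(\tau\mathbb{L})\cdot K_0(\Var_U^{\spr})$).
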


\begin{proof}
We have $\pi_1[U] = [V]$ mod $\tau \mathbb{L}$ in $K_0(\Var_U^{\spr})$. The statement then follows from Corollary \ref{paper glue thm}, as the gluing morphism $\mathfrak{g}_U$ is $\Kspr$-linear.
\end{proof}

\begin{exmp}
Consider the gluing map
\begin{align*}
    \mathfrak{g} = \mathfrak{g}_{k((t))}^k: K_0(\Var_{k((t))}^{\spr}) \rightarrow \Kspr
\end{align*}
when $X = \mathop{\mathrm{Spec}} k[[t]]$, with $U = \mathop{\mathrm{Spec}}k((t))$ and $Y = \mathop{\mathrm{Spec}}k$. 

We can describe the class $\mathfrak{g}([\mathcal{X}])$ explicitly when $\mathcal{X}$ is a smooth and proper variety over the field $k((t))$. Indeed, pick a regular model $f: \mathscr{X} \rightarrow \mathop{\mathrm{Spec}}k[[t]]$ such that $f$ is proper, the general fiber is isomorphic to $\mathcal{X}$, and the special fiber is an snc divisor $\cup_{i \in I} d_i D_i$ over $k$. Then we have
\begin{align*}
    \mathfrak{g}([\mathcal{X}]) = \sum_{\emptyset \neq J \subset I} (-1)^{|J|-1} [D_J \times \mathbb{P}^{|J|-1}].
\end{align*}
From this description, it is immediate that the $\mathbb{Z}[\tau, \mathbb{L}]$-linear extension of $\mathfrak{g}$ to a map $K_0(\Var_{k((t))}^{\dim}) \rightarrow \Kdim$ coincides with the volume morphism

\begin{align*}
    \Vol: K_0(\Var_{k((t))}^{\dim}) \rightarrow \Kdim
\end{align*}
discussed at various levels of generality in \cite{stablebirational}, \cite{kontsevichtschinkel}, \cite{K^dim}. Its reduction
\begin{align*}
    \Vol: \mathbb{Z}[\Bir_{k((t))}] \rightarrow \mathbb{Z}[\Bir_k]
\end{align*}
is the key ingredient in proving that rationality is stable under specialization.
\end{exmp}

\section{\texorpdfstring{$\lambda$}{λ}-Structures on $K_0$-Rings}
We will require the language of $\lambda$-rings in Section \ref{kapranov zeta section} and especially the equality of two natural $\lambda$-structures on $\Kstk$ in Subsection \ref{sym section}.

\subsection{Basic Theory}
We first provide a rapid introduction to $\lambda$-rings. For more information, see \cite{atiyahtall} or \cite{larsenluntsrationality}.
\begin{defn}
A $\lambda$-structure on a ring $A$ is a sequence of maps $\lambda^m: A \rightarrow A$ for $m \geq 0$ such that, for all $x,y \in A$ and $m \geq 0$,
\begin{align*}
    \lambda^0(x) = 1, \hspace{.5cm} \lambda^1(x) = x, \hspace{.5cm} \lambda^m(x+y) = \sum_{i+j=m}\lambda^i(x) \cdot \lambda^j(y).
\end{align*}

A $\lambda$-ring is a ring equipped with a $\lambda$-structure. A $\lambda$-ring $A$ is graded if $A$ is a graded ring and $\lambda^m(A_d) \subset A_{md}$ for all $m$ and $d$. A (graded) $\lambda$-homomorphism between (graded) $\lambda$-rings is a (graded) ring homomorphism which commutes with the $\lambda$-operations on the source and target.
\end{defn}

\begin{exmp}
The integers $\mathbb{Z}$ admit a $\lambda$-structure $\lambda^i(n) = \binom{n+i-1}{i}$.
\end{exmp}

\begin{exmp}
Consider, more generally, the Grothendieck group $K(X)$ of vector bundles on a variety $X$. This is the abelian group generated by isomorphism classes of vector bundles on $X$, modulo the relations $[E] = [F]+[G]$ for every short exact sequence $0 \rightarrow F \rightarrow E \rightarrow G \rightarrow 0$. It admits a $\lambda$-structure $\lambda^i([E]) = [\Sym^i E]$.
\end{exmp}

We will use a variant of this construction.

\begin{exmp}\label{K bar}
For a variety $X$, consider the abelian group $\overline{K}(X)$ generated by isomorphism classes of vector bundles on $X$, modulo the relations $[E] = [F] + [G]$ when $E$ and $F \oplus G$ are isomorphic vector bundles. It admits a $\lambda$-structure $\lambda^i([E]) = [\Sym^i E]$. Unlike $K(X)$, it also admits a global sections map
\begin{align*}
    h^0: \overline{K}(X) \rightarrow \mathbb{Z}, \hspace{.5cm} [E] \mapsto \dim H^0(X, E)
\end{align*}
\end{exmp}

\begin{prop}\label{K0 lambda defn}
The graded Grothendieck ring of varieties $\Kdim$ admits a graded $\lambda$-ring structure defined by
\begin{align*}
    \lambda^m([X]_d) = [\Sym^m X]_{md}
\end{align*}
for quasi-projective varieties $X$ and integers $d \geq \dim X$. Moreover, 
\begin{align*}
    \lambda^m(\tau \alpha) = \tau^m \lambda^m(\alpha), \hspace{.5cm} \lambda^m(\mathbb{L} \alpha) = \mathbb{L}^m \lambda^m(\alpha)
\end{align*}
for all $\alpha \in \Kdim$.
\end{prop}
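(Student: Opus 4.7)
The plan is to construct the $\lambda$-operations via a generating-function calculation in $1 + t\Kdim[[t]]$ and then bootstrap the $\tau$- and $\mathbb{L}$-compatibilities from the case of a single generator.

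For well-definedness, I would first package the proposed operations into a single map
\begin{align*}
\sigma_t([X]_d) = \sum_{m \geq 0} [\Sym^m X]_{md}\, t^m \in 1 + t\Kdim[[t]]
\end{align*}
defined on isomorphism classes of quasi-projective varieties $X$ equipped with a chosen degree $d \geq \dim X$; this is graded-consistent because $\dim \Sym^m X \leq md$. The task becomes to show that $\sigma_t$ extends to a group homomorphism from the additive group of $\Kdim$ to the multiplicative group $(1 + t\Kdim[[t]])^\times$. Both disjoint union and the cut-and-paste relation reduce to the canonical locally closed decomposition
\begin{align*}
\Sym^m X = \coprod_{i+j=m} \Sym^i Y \times \Sym^j(X-Y),
\end{align*}
obtained by sorting an unordered $m$-tuple according to how many of its points lie in the closed subvariety $Y$. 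All pieces are quasi-projective, so the identity $[\Sym^m X]_{md} = \sum_{i+j=m}[\Sym^i Y]_{id} \cdot [\Sym^j(X-Y)]_{jd}$ makes sense in $\Kdim$. Extracting the coefficient of $t^m$ then defines $\lambda^m: \Kdim \rightarrow \Kdim$ and makes the $\lambda$-axiom $\lambda^m(\alpha+\beta) = \sum_{i+j=m} \lambda^i(\alpha)\lambda^j(\beta)$ automatic.

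For the compatibility with $\tau$ and $\mathbb{L}$, I would recast the desired identities as $\sigma_t(\tau \alpha) = \sigma_{\tau t}(\alpha)$ and $\sigma_t(\mathbb{L} \alpha) = \sigma_{\mathbb{L} t}(\alpha)$. In each case, the subset of $\alpha \in \Kdim$ on which the identity holds is a subgroup: if it holds for $\alpha_1, \alpha_2$, then multiplicativity of $\sigma_t$ forces it to hold for $\alpha_1 + \alpha_2$ and for $-\alpha_1$. Hence it suffices to verify the identity for $\alpha = [X]_d$. The $\tau$ case is then immediate from the definition of the $\tau$-action, while the $\mathbb{L}$ case reduces to the geometric claim
\begin{align*}
[\Sym^m(X \times \mathbb{A}^1_k)] = \mathbb{L}^m \cdot [\Sym^m X] \quad \text{in } \Kdim
\end{align*}
for every quasi-projective $X$.

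This last identity is the step I expect to be the main obstacle. I would prove it by stratifying $\Sym^m X$ by partition type: for each partition $\lambda = (n_1, \ldots, n_k)$ of $m$, the locally closed stratum $(\Sym^m X)_\lambda$ parametrises unordered $m$-tuples with multiplicity profile $\lambda$, and over it the projection $\pi: \Sym^m(X \times \mathbb{A}^1_k) \rightarrow \Sym^m X$ has fibre $\prod_i \Sym^{n_i}(\mathbb{A}^1_k) = \prod_i \mathbb{A}^{n_i}_k = \mathbb{A}^m_k$, via the elementary symmetric functions in the $t$-coordinates at each block. The care is to verify that this fibration is Zariski-locally trivial over each stratum; this can be achieved by choosing local parameters at a fixed tuple and noting that the symmetrisation in the $t$-coordinates is independent of those parameters, exhibiting $\pi^{-1}((\Sym^m X)_\lambda) \cong (\Sym^m X)_\lambda \times \mathbb{A}^m_k$ locally. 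Passing to $\Kdim$ and summing over $\lambda$ yields $[\Sym^m(X \times \mathbb{A}^1_k)] = \sum_\lambda \mathbb{L}^m \cdot [(\Sym^m X)_\lambda] = \mathbb{L}^m \cdot [\Sym^m X]$, completing the proof.
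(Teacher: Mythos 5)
Your proposal matches the paper's strategy in its essentials. The stratification of $\Sym^m X$ by how many points lie in a closed subvariety $Y$ versus its open complement is exactly the decomposition the paper uses to verify both well-definedness and the $\lambda$-axiom, and your subgroup bootstrap to reduce the $\tau$- and $\mathbb{L}$-compatibilities to the single-generator case is correct and convenient. Where the paper declares the $\tau$-compatibility clear and simply cites the $\mathbb{L}$-compatibility $[\Sym^m(X\times\mathbb{A}^1)] = \mathbb{L}^m[\Sym^m X]$ to an argument of Totaro recorded in G\"ottsche's Lemma 4.4, you have unpacked that citation; the stratification of $\Sym^m X$ by partition type, with fibre $\prod_i \Sym^{n_i}(\mathbb{A}^1)\cong\mathbb{A}^m$, is indeed the substance of Totaro's argument.

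The one spot that needs sharpening is the assertion of Zariski-local triviality of $\pi^{-1}((\Sym^m X)_\lambda) \to (\Sym^m X)_\lambda$. Your sentence about "choosing local parameters at a fixed tuple" does not by itself establish this, and it is the crux: an $\mathbb{A}^m$-fibration that is only \'etale-locally trivial need not contribute $\mathbb{L}^m$ times its base in $\Kdim$, and when $\lambda$ has repeated parts there is real finite monodromy permuting the equal-sized blocks. The gap is closeable, and your own observation about elementary symmetric functions gives the cleanest route: under $\Sym^n(\mathbb{A}^1)\cong\mathbb{A}^n$, the monodromy acts on $\mathbb{A}^m=\prod_i\mathbb{A}^{n_i}$ by block permutation matrices, i.e.\ through a homomorphism to $GL_m$; hence the fibration is the vector bundle associated to the finite \'etale cover that orders the blocks, and vector bundles are always Zariski-locally trivial. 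With that line added, your proof is complete and recovers the fact the paper delegates to the literature.
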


\begin{proof}
Let $Y \subset X$ be a closed subscheme with complement $U = X - Y$. Stratifying the symmetric power variety $\Sym^m X$ according to how many of its points lie in $U$ and $Y$ respectively, we see
\begin{align*}
    [\Sym^m X]_{md} = \sum_{i+j=m} [\Sym^i U]_{id} \cdot [\Sym^j Y]_{jd}. 
\end{align*}
This identity shows both that $\lambda^m$ is well-defined and that it is a $\lambda$-structure on $\Kdim$. 

The identity $\lambda^m(\tau \alpha) = \tau^m \lambda(\alpha)$ is clear, and $\lambda^m(\mathbb{L} \alpha) = \mathbb{L}^m \lambda(\alpha)$ follows from an argument of Totaro, as in \cite[Lemma $4.4$]{gottschehilbert}.
\end{proof}

\begin{rmk}
If $X$ isn't quasi-projective, then the quotient $\Sym^m X = X^m / S_m$ need not be defined as a scheme. It is, however, an algebraic space. As any algebraic space is generically a scheme, the naive Grothendieck ring $K_0(\Spc_k^{\dim})$ of algebraic spaces can be naturally identified with $\Kdim$. In this sense, we may always view $\lambda^m([X]_d)$ as equal to $[\Sym^m X]_{md}$.
\end{rmk}

Let $A$ be a $\lambda$-ring. An ideal $I$ of $A$ is called a $\lambda$-ideal if it is preserved by the $\lambda$-operations, in which case $A/I$ is naturally a $\lambda$-ring, and the quotient map $A \rightarrow A/I$ a $\lambda$-homomorphism. 

For example $(\tau)$ is a $\lambda$-ideal of $\Kdim$. The $\lambda$-structure on the quotient $\Kdim / (\tau) \cong \mathbb{Z}[\Bir_k]$ is given by taking symmetric powers of birational types.

There exist Adams operations $\Psi^n: A \rightarrow A$ for each $n \geq 1$. These are natural operations which are defined by universal polynomials in the $\lambda$-operations $\lambda^i$.

We can package together some of the axioms of a $\lambda$-structure by saying that $A$ is equipped with a group homomorphism
\begin{align*}
    \lambda_t: A \rightarrow (1+t A[[t]])^\times, \hspace{.5cm} a \mapsto \sum_{m \geq 0} \lambda^m(a) \cdot t^m.
\end{align*}
In fact, $(1+tA[[t]])^\times$ admits a natural $\lambda$-ring structure. Here, addition is given by multiplication of power series. Multiplication $\circ$ and the $\lambda$-operations are uniquely characterized by the identities
\begin{align*}
    \bigg( \prod_{i=1}^n (1+a_i t) \bigg) \circ \bigg( \prod_{j=1}^m (1+b_j t) \bigg) = \prod_{i=1}^n \prod_{j=1}^m (1+a_i b_j t ),
\end{align*}
and
\begin{align*}
    \lambda^m \bigg( \prod_{i=1}^n (1+a_i t) \bigg) = \prod_{J \subset \{1 \ldots n\}, \| J\|=m} (1+t \prod_{j \in J}a_j).
\end{align*}
Nevertheless, $\lambda_t$ is not a $\lambda$-homomorphism unless $A$ is a so-called special $\lambda$-ring. For example, $\Kdim$ is not special.

\begin{defn}
The graded Kapranov zeta function of a variety $X$ is 
\begin{align*}
    Z_X^{\dim}(t) = \lambda_t([X]) = \sum_{m \geq 0}[\Sym^m X] \cdot t^m \in \Kdim[[t]].
\end{align*}
The (ungraded) Kapranov zeta function $Z_X(t) \in K_0(\Var_k)[[t]]$ of $X$, introduced in \cite{kapranov}, is its reduction modulo $\tau-1$.
\end{defn}

\subsection{$\lambda$-Structures on $\Kstk$}
Consider the graded Grothendieck ring of stacks
\begin{align*}
    \Kstk \cong \Kdim[\mathbb{L}^{-1}, (\mathbb{L}^n-\tau^n)^{-1}_{n \geq 1}].
\end{align*} 
It inherits a graded $\lambda$-structure by applying the following Lemma, where we take $A = \Kdim$, $r_1 = \mathbb{L}$, $r_2 = \tau$, and $S$ the multiplicative set generated by $x_1$ and $x_1^m - x_2^m$ for all $m \geq 1$.

\begin{lem}\label{lambda ext}
Let $A$ be a $\lambda$-ring such that $\lambda_t(1) = (1-t)^{-1}$ and let $r_1, \ldots r_n \in A$ be such that $\lambda_t(r_ix) = \lambda_{r_it}(x)$ for $i = 1, \ldots , n$ and all $x \in A$. If $S$ is a set of integer polynomials in $n$ variables closed under multiplication and substitution $(x_1, \ldots x_n) \mapsto (x_1^m, \ldots x_n^m)$ for all $m \geq 1$ and $S' = \{ f(r_1, \ldots , r_n) | f \in S \}$, then there is a unique $\lambda$-structure on the localization $A' = (S')^{-1}A$ for which $A \rightarrow A'$ is a $\lambda$-ring homomorphism and for which $\lambda_t(r_i x) = \lambda_{r_i t}(x)$ for $i = 1, \ldots , n$ and all $x \in A'$.

Moreover, if $A$ is a graded $\lambda$-ring and each $r_i$ and each $f(r_1, \ldots , r_n)$ is homogeneous, then $A'$ is a graded $\lambda$-ring.
\end{lem}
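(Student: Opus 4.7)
The plan is to derive an explicit functional equation that any candidate extension must satisfy, then solve it term-by-term using that the localizing elements are units in $A'$.

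First I would unpack the hypothesis $\lambda_t(r_i x) = \lambda_{r_i t}(x)$ coefficient by coefficient, obtaining $\lambda^m(r_i x) = r_i^m \lambda^m(x)$; combined with $\lambda_t(1) = (1-t)^{-1}$ this forces $\lambda^m(\mu \cdot x) = \mu^m \lambda^m(x)$ for every monomial $\mu = r_1^{k_1}\cdots r_n^{k_n}$. Consequently the Adams operation $\Psi^m$ acts on $\mathbb{Z}[r_1,\ldots,r_n] \subset A$ by $r_i \mapsto r_i^m$, and by the closure hypothesis on $S$ we have $\Psi^m(f(r_1,\ldots,r_n)) = f(r_1^m,\ldots,r_n^m) \in S'$ for every $f \in S$ and $m \geq 1$. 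This is the key fact that drives the rest of the argument.

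For uniqueness, suppose a $\lambda$-structure with the stated properties exists on $A'$, and let $x \in A$, $f = \sum_j c_j m_j \in S'$. Applying additivity of $\lambda_t$ and the condition $\lambda_t(r_i y) = \lambda_{r_i t}(y)$ on $A'$ to the identity $x = \sum_j c_j m_j \cdot (x/f)$ produces the functional equation
\begin{align*}
    \prod_j \lambda_{m_j t}(x/f)^{c_j} = \lambda_t(x) \quad \text{in } 1 + tA'[[t]].
\end{align*}
Expanding an unknown solution $g(t) = 1 + \sum_{k \geq 1} d_k t^k$, the coefficient of $t^N$ on the left takes the form $d_N \cdot \Psi^N(f) + P_N(d_1,\ldots,d_{N-1})$, where $P_N$ is a universal polynomial built from the $m_j$ and $c_j$. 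Since $\Psi^N(f) = f(r_1^N,\ldots,r_n^N) \in S'$ is invertible in $A'$, each $d_N$ is uniquely determined inductively from $\lambda_t(x)$, which gives uniqueness; the same induction, run constructively, produces the power series we take as the definition of $\lambda_t(x/f)$. The base case yields $d_1 = x/f$, matching $\lambda^1(x/f) = x/f$.

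It then remains to verify that this definition is (a) independent of the representative $x/f$, (b) additive and extends the $\lambda$-structure on $A$, and (c) satisfies $\lambda_t(r_i z) = \lambda_{r_i t}(z)$ for all $z \in A'$. Each of these reduces, via the uniqueness principle above, to an identity of power series that holds by hypothesis in $A$: for (a), if $x/f = y/g$ with $h(xg - yf) = 0$ for some $h \in S'$, both candidate series for $\lambda_t(x/f)$ and $\lambda_t(y/g)$ satisfy the functional equation associated with the common denominator $fgh$, hence coincide. The graded refinement is immediate: when the $r_i$ and $f(r_1,\ldots,r_n)$ are homogeneous, the inductive step preserves degree, so $\lambda^m$ lands in degree $md$ on degree-$d$ inputs. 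The main obstacle I anticipate is precisely the well-definedness check (a), which requires combining several instances of the functional equation before invoking uniqueness; everything else falls out routinely.
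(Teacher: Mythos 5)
Your argument and the paper's share the same computational heart: both reduce to solving, coefficient by coefficient, a functional equation whose degree-$N$ term is $d_N \cdot f(r_1^N,\ldots,r_n^N)$ plus lower-order data, and both rely on the closure of $S$ under $(x_1,\ldots,x_n)\mapsto(x_1^m,\ldots,x_n^m)$ to see that $f(r_1^N,\ldots,r_n^N) \in S'$ is invertible in $A'$. (One small caveat: you label $\sum_j c_j m_j^N$ as $\Psi^N(f)$; since $A$ is not assumed special, the Adams operations need not be multiplicative, so that identification is not literally available and is also unnecessary — the direct computation of the $d_N$-linear term is what drives the induction.) The difference is in the packaging. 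The paper endows $(1+tA[[t]])^\times$ with its standard ring structure, observes that $\sigma_t = \lambda_{-t}(\cdot)^{-1}$ restricted to the subring $R = \mathbb{Z}[r_1,\ldots,r_n]$ makes $\lambda_t$ an $R$-module homomorphism, and then invokes the universal property of the localization $(S')^{-1}A$: extending $\lambda_t$ to $A'$ is automatic once each $\sigma_t(f)$, $f\in S$, is shown to act invertibly on $(1+tA'[[t]])^\times$. This disposes of well-definedness, additivity, compatibility with $A\to A'$, and uniqueness all at once, whereas you construct $\lambda_t(x/f)$ directly and then must verify those properties by hand (as you note, well-definedness is the delicate one). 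Your sketch of the well-definedness check — passing to a common denominator $fgh$ and comparing the two solutions against the same functional equation — is the right idea and does close the gap, but the module-theoretic formulation avoids having to carry it out. So: correct, same key lemma, somewhat more work than necessary; you would save effort by recognizing that your functional equation $\prod_j \lambda_{m_jt}(x/f)^{c_j} = \lambda_t(x)$ is exactly the statement $\sigma_t(f)\circ\lambda_t(x/f) = \lambda_t(x)$ and letting localization of modules do the bookkeeping.
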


For ease of language, we will hereafter say that $r_1, \ldots , r_n$ are $\lambda$-linear in $A$ if $\lambda_t(r_i x) = \lambda_{r_i t}(x)$ for $i = 1, \ldots , n$ and all $x \in A$.

\begin{proof}
Special care is required to keep track of notation in the ring $(1+t A[[t]])^\times$, which we will make use of throughout the proof. The symbol $\circ$ denotes multiplication in the ring, the usual multiplicative symbols $\cdot$, $\bullet^{-1}$, and $\prod$ refer to addition in the ring (multiplication of power series), and the usual addition symbols $+$ and $\Sigma$ refer to addition of power series.

We compute, for $x \in A$ and $i =1, \ldots , n$,
\begin{align*}
    \lambda_t(r_i x) = \lambda_{r_i t}(x) =  (1+r_i t) \circ \lambda_t(x) = (\lambda_{-t}(r_i)^{-1}) \circ \lambda_t(x) = \sigma_t(r_i) \circ \lambda_t(x),
\end{align*}
where we denote $\sigma_t(r_i) = \lambda_{-t}(r_i)^{-1}$. In fact, if $R$ is the subring of $A$ generated by $r_1, \ldots , r_n$, we can immediately show that the map
\begin{align*}
    \sigma_t: R \rightarrow (1+t A[[t]])^\times, \hspace{.5cm} r \mapsto \lambda_{-t}(r)^{-1}
\end{align*}
is a ring homomorphism which makes
\begin{align*}
    \lambda_t: A \rightarrow (1+t A[[t]])^\times
\end{align*}
into a $R$-module homomorphism. To prove the lemma, we need to show that $\lambda_t$ extends uniquely to a $(S')^{-1}R$-module homomorphism
\begin{align*}
    \lambda_t: A' \rightarrow (1+tA'[[t]])^\times.
\end{align*}
For this, it suffices to show that the image of $\sigma_t(f(r_1, \ldots r_n))$ in $(1+tA'[[t]])^\times$ is invertible for all $f \in S$..

Write $f(x) = \sum_I a_I x_1^{i_1} \ldots x_n^{i_n}$. We construct an inverse $\phi(t) = 1 + \sum_{n > 0} b_n t^n$ to $\sigma_t(f(r_1, \ldots r_n))$ by determining the coefficients $b_n$ inductively. For this, let's compute
\begin{align*}
     \sigma_t(f(r_1, \ldots r_n)) \circ \phi(t) 
        =& \prod_I \sigma_t( r_1^{i_1} \ldots r_n^{i_n} )^{a_I} \circ \phi(t) \\
        =& \prod_I \phi(r_1^{i_1} \ldots r_n^{i_n} t)^{a_I} \\
        =& \prod_I (1+\sum_{n>0} b_n r_1^{n i_1} \ldots r_n^{n i_n} t^n)^{a_I}.
\end{align*}
For the second equality, we used repeatedly the identity
\begin{align*}
    \sigma_t(r_i) \circ \phi(t) = (1+r_i t) \circ \phi(t) = \phi(r_i t).
\end{align*}

The coefficient for $t^n$ in this expression is a sum, consisting of $\sum_I a_I b_n r_1^{n i_1} \ldots r_n^{n i_n} = b_n f(r_1^n, \ldots r_n^n)$ and other terms, each of which contains $b_i$ for some $i<n$. Assuming $b_i$ for $i<n$ have already been chosen uniquely, we may solve uniquely for $b_n$ inside of $A'$ so as to force this coefficient to vanish.

The final assertion is clear from our proof if we keep track of graded degree.
\end{proof}

\begin{rmk}\label{lambda extn rmk}
Let us explicitly note for future reference that, in the situation of Lemma \ref{lambda ext}, our proof shows 
\begin{align*}
    \lambda_t: A' \rightarrow (1+tA'[[t]])^\times
\end{align*}
is an $(S')^{-1}R$-module homomorphism, where $R$ is the ring generated by $r_1,\ldots r_n$.
\end{rmk}

On the other hand, there is also a more natural stack-theoretic notion of symmetric product $\Symm$. For example, if $X$ is a scheme, then $\Symm^m(X)$ is the stack quotient $[X^m/S_m]$. See \cite{ekedahlgroup} for further details and a proof of the following, which follows along the lines of Proposition \ref{K0 lambda defn}.

\begin{prop}
The graded Grothendieck ring of stacks $\Kstk$ admits a graded $\lambda$-ring structure defined by $\sigma^m \{ \mathscr{X} \}_d = \{ \Symm^m \mathscr{X} \}_{md}$.
\end{prop}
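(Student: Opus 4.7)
The plan is to verify that the assignment $\sigma_t(\mathscr{X}) := \sum_{m \geq 0} \{\Symm^m \mathscr{X}\}_{md}\, t^m$, initially defined on isomorphism classes of stacks of relative dimension at most $d$, extends to a well-defined graded group homomorphism $\sigma_t \colon \Kstk \to (1 + t\Kstk[[t]])^\times$, i.e., that it respects both families of relations defining $\Kstk$ in Definition \ref{K0Stk defn}. Once this is done, the $\lambda$-axioms follow formally: the identities $\sigma^0 = 1$ and $\sigma^1 = \id$ are immediate from $\Symm^0\mathscr{X} = \mathop{\mathrm{Spec}}k$ and $\Symm^1\mathscr{X} = \mathscr{X}$, while additivity $\sigma^m(\alpha+\beta) = \sum_{i+j=m}\sigma^i(\alpha)\sigma^j(\beta)$ is precisely the statement that $\sigma_t$ is a group homomorphism.

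For the cut-and-paste relation, given a closed substack $\mathscr{Y} \subset \mathscr{X}$ with open complement $\mathscr{U}$, I would stratify $\mathscr{X}^m$ by the function $\{1,\ldots,m\} \to \{\mathscr{U}, \mathscr{Y}\}$ recording where each factor lies. The $S_m$-action permutes strata of the same combinatorial type, with stabilizer $S_i \times S_j$ on the stratum where $i$ factors lie in $\mathscr{U}$ and $j = m-i$ lie in $\mathscr{Y}$; passing to stack quotients yields
\[
\Symm^m \mathscr{X} \;=\; \coprod_{i+j=m} \Symm^i \mathscr{U} \times \Symm^j \mathscr{Y}.
\]
Applying relation (1) and packaging as generating series gives $\sigma_t(\mathscr{X}) = \sigma_t(\mathscr{Y}) \cdot \sigma_t(\mathscr{U})$, which is exactly the required compatibility. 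For the vector bundle relation, if $\mathscr{E} \to \mathscr{X}$ is a rank-$n$ vector bundle, then $\mathscr{E}^m \to \mathscr{X}^m$ is $S_m$-equivariantly a rank-$mn$ vector bundle (the permutation action being linear on fibers), so $\Symm^m \mathscr{E} = [\mathscr{E}^m/S_m] \to [\mathscr{X}^m/S_m] = \Symm^m \mathscr{X}$ inherits the structure of a rank-$mn$ vector bundle of stacks. The same holds for the trivial bundle $\mathscr{X} \times \mathbb{A}^n$, so relation (2) yields $\{\Symm^m \mathscr{E}\} = \mathbb{L}^{mn} \cdot \{\Symm^m \mathscr{X}\} = \{\Symm^m(\mathscr{X} \times \mathbb{A}^n)\}$ in $\Kstk$, i.e., $\sigma_t(\mathscr{E}) = \sigma_t(\mathscr{X} \times \mathbb{A}^n)$.

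The main obstacle lies in making precise the geometric claims just asserted at the level of stacks: that under the finite-type and affine-stabilizer hypotheses of Subsection \ref{DM section}, the stack quotient $[\mathscr{X}^m/S_m]$ genuinely admits the claimed stratification in $\Kstk$, and that $\Symm^m \mathscr{E} \to \Symm^m \mathscr{X}$ qualifies as a vector bundle of stacks in the sense of Definition \ref{K0Stk defn} (rather than a more general fiber bundle). These are the technical content of the proposition, and the detailed verifications are carried out by Ekedahl in \cite{ekedahlgroup}, whose treatment my argument would follow.
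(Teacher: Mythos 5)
Your argument is correct and mirrors what the paper does: the paper defers to Ekedahl \cite{ekedahlgroup} and notes the proof follows the lines of Proposition~\ref{K0 lambda defn}, and your cut-and-paste stratification of $\Symm^m\mathscr{X}$ into $\Symm^i\mathscr{U}\times\Symm^j\mathscr{Y}$ is exactly the argument used there, while your observation that $\mathscr{E}^m\to\mathscr{X}^m$ descends to a rank-$mn$ vector bundle $\Symm^m\mathscr{E}\to\Symm^m\mathscr{X}$ is the natural verification of the extra relation in $\Kstk$. One small point worth making explicit for the graded claim: you should note that $\dim_S\Symm^m\mathscr{X}=m\dim_S\mathscr{X}$, so the placement $\{\Symm^m\mathscr{X}\}_{md}$ is legitimate when $\{\mathscr{X}\}_d$ is.
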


To show that the two lambda-structures coincide, we will need the following theorem of Ekedahl \cite[Theorem $4.3$]{ekedahlgroup}.

\begin{thm}\label{B S_m}
$\{ BS_m \} = 1$ in $\Kstk$ for all $m \geq 1$.
\end{thm}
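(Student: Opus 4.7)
The plan is induction on $m$. The base case $m=1$ is trivial since $BS_1 = \mathop{\mathrm{Spec}} k$. For the inductive step, suppose $\{BS_j\} = 1$ for all $1 \leq j < m$, and compute the class $\{[\mathbb{A}^m/S_m]\}_m$ in two ways, with $S_m$ acting on $\mathbb{A}^m$ by coordinate permutations.

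First, the permutation representation realizes $[\mathbb{A}^m/S_m]$ as the total space of a rank-$m$ vector bundle over $BS_m$, so relation $(2)$ of Definition~\ref{K0Stk defn} gives $\{[\mathbb{A}^m/S_m]\}_m = \mathbb{L}^m \{BS_m\}$. Second, stratify $\mathbb{A}^m$ by locally closed $S_m$-invariant subschemes $V_\lambda$ indexed by partitions $\lambda \vdash m$, where $V_\lambda$ consists of tuples whose multiset of coordinate values has multiplicity structure $\lambda$. The diagonal $V_{(m)} \cong \mathbb{A}^1$ carries the trivial $S_m$-action and contributes $\tau^{m-1}\mathbb{L} \{BS_m\}$. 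For any other $\lambda$, with distinct part sizes $b_1 < \cdots < b_s$ of multiplicities $e_1, \ldots, e_s$ (so every $b_i < m$), the key claim is
\[
    \{[V_\lambda/S_m]\} \;=\; \{W_\lambda\} \cdot \prod_i \{BS_{b_i}\}^{e_i},
\]
where $W_\lambda := V_\lambda/S_m$ is the coarse quotient (an open subscheme of $\prod_i \Sym^{e_i}\mathbb{A}^1$); the inductive hypothesis then collapses the right side to $\{W_\lambda\}$. Using $\mathbb{A}^m/S_m \cong \mathbb{A}^m$ via elementary symmetric polynomials, together with $W_{(m)}$ having class $\tau^{m-1}\mathbb{L}$, cut-and-paste on the scheme $\mathbb{A}^m$ gives $\sum_{\lambda \neq (m)} \{W_\lambda\} = \mathbb{L}^m - \tau^{m-1}\mathbb{L}$. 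Equating the two expressions for $\{[\mathbb{A}^m/S_m]\}_m$ yields $\mathbb{L}(\mathbb{L}-\tau)[\mathbb{P}^{m-2}] \{BS_m\} = \mathbb{L}(\mathbb{L}-\tau)[\mathbb{P}^{m-2}]$, and since $\mathbb{L}$, $\mathbb{L}-\tau$ (a unit because $(\mathbb{L}-\tau)^2$ is inverted), and $[\mathbb{P}^{m-2}]$ are all units in $\Kstk$, we cancel to conclude $\{BS_m\} = 1$.

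The main obstacle is the key claim. At any point of $V_\lambda$, the stabilizer in $S_m$ is conjugate to $H_\lambda := \prod_i S_{b_i}^{e_i}$, so $[V_\lambda/S_m]$ is a gerbe over $W_\lambda$ banded by a sheaf of groups with stalk $H_\lambda$. My plan is to trivialize this gerbe in $\Kstk$ by a Morita argument: writing $V_\lambda \cong S_m \times_{H_\lambda^+} Y$, where $H_\lambda^+ := \prod_i (S_{b_i} \wr S_{e_i}) \subset S_m$ is the full wreath subgroup preserving the partition type and $Y \subset \prod_i (\mathbb{A}^1)^{e_i}$ is the associated open configuration space, one gets $[V_\lambda/S_m] \cong [Y/H_\lambda^+]$. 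Then the split extension $1 \to H_\lambda \to H_\lambda^+ \to \prod_i S_{e_i} \to 1$, in which $H_\lambda$ acts trivially on $Y$ while $\prod_i S_{e_i}$ acts freely on $Y$ with quotient $W_\lambda$, reduces the computation to $\{W_\lambda\} \cdot \{BH_\lambda\} = \{W_\lambda\} \cdot \prod_i \{BS_{b_i}\}^{e_i}$. The delicate point is the outer-automorphism twist from the wreath structure acting on $BH_\lambda$; but once the inductive hypothesis gives $\{BS_{b_i}\} = 1$ and hence $\{BH_\lambda\} = 1$, any $BH_\lambda$-gerbe has trivial fibrewise contribution, so the gerbe's class coincides with $\{W_\lambda\}$.
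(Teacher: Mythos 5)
The overall strategy---induction on $m$, computing $\{[\mathbb{A}^m/S_m]\}_m$ once via the vector-bundle relation and once via the partition-type stratification, then cancelling units of $\Kstk$---is sound, and is essentially the shape of Ekedahl's proof (which the paper simply cites as \cite[Theorem~4.3]{ekedahlgroup} rather than reproving). The first computation, the diagonal contribution, and the final cancellation of $\mathbb{L}(\mathbb{L}-\tau)[\mathbb{P}^{m-2}]$ are all correct. The identification $[V_\lambda/S_m] \cong [Y/H_\lambda^+]$ via the induced-space formula $V_\lambda \cong S_m \times_{H_\lambda^+} Y$ is also correct, as is the observation that $[Y/H_\lambda^+]$ is a gerbe over $W_\lambda$ banded by $H_\lambda=\prod_i S_{b_i}^{e_i}$.

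The gap is exactly where you flag it as ``delicate,'' and your resolution does not close it. The statement ``once $\{BH_\lambda\}=1$, any $BH_\lambda$-gerbe has trivial fibrewise contribution'' is the entire content of the key claim, not a justification of it. The general principle it rests on---if $N$ acts trivially on $X$ and $\{BN\}=1$ then $\{[X/(N\rtimes Q)]\} = \{[X/Q]\}\cdot\{BN\}$---is \emph{false}: applied along a composition series it would give $\{BG\}=1$ for every solvable finite group $G$, which contradicts the known counterexamples (related to Noether's problem) that Ekedahl himself constructs. The twist of the gerbe by the outer $Q$-action on $BH_\lambda$ genuinely matters, and you need a specific reason why it does not change the class here.

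To see concretely that your inductive hypothesis is not strong enough, take $m=4$ and $\lambda=(2,2)$. Then $H_\lambda^+ = S_2\wr S_2 = D_4$, acting on $\mathbb{A}^2 \supset Y=\operatorname{Conf}_2(\mathbb{A}^1)$ through the quotient $S_2$; by the vector-bundle relation applied to $\mathbb{A}^2$ and to the diagonal $\Delta\cong\mathbb{A}^1$ (on which $D_4$ acts trivially), one gets
\[
\{[V_{(2,2)}/S_4]\}_4 \;=\; \{[Y/D_4]\}_4 \;=\; (\mathbb{L}^2-\tau\mathbb{L})\,\tau^2\{BD_4\},
\]
whereas the key claim requires this to be $\{W_{(2,2)}\}_4 = (\mathbb{L}^2-\tau\mathbb{L})\tau^2$. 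So what is actually needed at this stratum is $\{BD_4\}=1$, i.e.\ $\{BH_\lambda^+\}=1$ for the wreath group---and $D_4$ is not one of $S_1,S_2,S_3$, so the hypothesis $\{BS_j\}=1$ for $j<4$ does not supply it. In general the induction you set up establishes $\{BH_\lambda\}=1$ but the stratum class is governed by $\{BH_\lambda^+\}$, and these are linked only by the gerbe-triviality statement you are trying to avoid proving. Fixing the proof requires either a stronger inductive hypothesis (covering iterated wreath products of symmetric groups, with a stratification of a faithful representation of $H_\lambda^+$ rather than of the non-faithful one through $Q$), or a genuine argument that the class of a gerbe banded by $H_\lambda$ and split by the finite \'etale cover $Y\to W_\lambda$ equals $\{W_\lambda\}\cdot\{BH_\lambda\}$; neither is present in the proposal.
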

Here $B S_m$ is the classifying stack of the symmetric group $S_m$. We already saw an analogous result holds for cyclic groups and hence for all finite abelian groups in Lemma \ref{B unity}.

\begin{prop}\label{coincide}
The two graded $\lambda$-structures $\lambda^m$ and $\sigma^m$ on $\Kstk$ coincide.
\end{prop}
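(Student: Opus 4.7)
The plan is to invoke the uniqueness assertion in Lemma \ref{lambda ext}. Recall that $\lambda^m$ on $\Kstk$ is defined as the unique extension of $\lambda^m$ on $\Kdim$ for which $\mathbb{L}$ and $\tau$ remain $\lambda$-linear. To conclude $\sigma^m = \lambda^m$, it therefore suffices to establish two things: (a) that $\sigma^m$ agrees with $\lambda^m$ on the image of $\Kdim \to \Kstk$, and (b) that $\mathbb{L}$ and $\tau$ are $\sigma$-linear in $\Kstk$.

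For (b), the $\tau$-linearity $\sigma_t(\tau\, x) = \sigma_{\tau t}(x)$ is immediate, since $\tau$ acts as a pure degree shift and $\sigma^m$ is graded of degree $m$. For the $\mathbb{L}$-linearity, the projection
\[
\Symm^m(\mathbb{A}^1_k \times_k \mathscr{X}) \;=\; \bigl[(\mathbb{A}^1_k)^m \times \mathscr{X}^m \,/\, S_m\bigr] \;\longrightarrow\; [\mathscr{X}^m/S_m] \;=\; \Symm^m \mathscr{X}
\]
is the rank-$m$ vector bundle associated to the permutation representation of $S_m$ on $\mathbb{A}^m_k$. Relation (2) of Definition \ref{K0Stk defn} then gives $\{\Symm^m(\mathbb{A}^1_k \times_k \mathscr{X})\} = \mathbb{L}^m \cdot \{\Symm^m \mathscr{X}\}$, which is precisely $\sigma_t(\mathbb{L}\, x) = \sigma_{\mathbb{L} t}(x)$.

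For (a), since $\sigma$ and $\lambda$ are both $\lambda$-structures, they are determined by their values on a set of generators via the addition formula. It therefore suffices to verify $\{\Symm^m X\} = \{\Sym^m X\}$ in $\Kstk$ for each variety $X$. My plan is to stratify $X^m = \sqcup_\lambda X^m_\lambda$ by $S_m$-orbit type, indexed by partitions $\lambda = (1^{a_1} 2^{a_2} \cdots)$ of $m$. On each stratum, the stabilizer in $S_m$ of a point of $X^m_\lambda$ is the product $H_\lambda = \prod_i (S_i)^{a_i}$ of symmetric groups permuting positions within clusters of equal values. The coarse moduli morphism $[X^m_\lambda/S_m] \to X^m_\lambda/S_m$ is then a gerbe banded by $H_\lambda$, and combining Theorem \ref{B S_m} with the multiplicativity $\{B(G \times G')\} = \{BG\} \cdot \{BG'\}$ gives $\{B H_\lambda\} = \prod_i \{BS_i\}^{a_i} = 1$.

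The main obstacle is to argue carefully that the $H_\lambda$-gerbe $[X^m_\lambda/S_m] \to X^m_\lambda/S_m$ contributes a factor of $\{BH_\lambda\} = 1$ rather than some nontrivial twist. The cleanest route is to fix a shape $S$ (an ordered cluster structure compatible with $\lambda$) and present $[X^m_\lambda/S_m] \cong \bigl[(X^m_{\lambda,S} \times BH_\lambda)/\prod_i S_{a_i}\bigr]$, with the outer quotient taken by the free action of $\prod_i S_{a_i}$ permuting the distinct values appearing in the tuple (acting on $BH_\lambda$ by conjugation). Using $\{BH_\lambda\} = 1$ and the freeness of the outer action, the projection onto the first factor gives $\{[X^m_\lambda/S_m]\} = \{X^m_\lambda/S_m\}$ in $\Kstk$. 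Summing over $\lambda$ establishes (a), and Lemma \ref{lambda ext} then delivers $\sigma = \lambda$.
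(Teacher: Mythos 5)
Your proposal takes essentially the same route as the paper: establish $\sigma$-linearity of $\tau$ and $\mathbb{L}$, reduce via Lemma \ref{lambda ext} (you use its uniqueness clause, the paper uses the $(T')^{-1}\mathbb{Z}[\tau,\mathbb{L}]$-linearity of Remark \ref{lambda extn rmk}—these amount to the same thing) to the identity $\{\Symm^m X\} = \{\Sym^m X\}$ for varieties $X$, then stratify by multiplicity type and invoke Theorem \ref{B S_m}.

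You are actually more careful than the paper on one point. The paper asserts the isomorphism $\Symm^\alpha X \cong \Sym^\alpha X \times \prod_i B(S_i^{\times \alpha_i})$, but this fails as a literal isomorphism of stacks whenever some $\alpha_i \geq 2$ with $i \geq 2$: the gerbe $\Symm^\alpha X \to \Sym^\alpha X$ is neutral (it has a section) but its automorphism group scheme is the twist of $H_\alpha = \prod_i (S_i)^{\alpha_i}$ along the étale cover by the outer permutation action of $\prod_i S_{\alpha_i}$, so the gerbe does not globally split as a product. You rightly flag this as the main obstacle. Your one-sentence resolution is still too quick, however: the freeness of the outer action and the identity $\{BH_\lambda\}=1$ do not by themselves give the class equality for a twisted gerbe. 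The clean fix is to choose a faithful representation $H_\lambda \hookrightarrow GL_N$ and observe that $\Symm^\alpha X$, presented via $GL_N$, is an $(H_\lambda\backslash GL_N)$-bundle over $\Sym^\alpha X$ that is Zariski-locally trivial (by rigidity of finite-group representations), with fiber of class $\{H_\lambda\backslash GL_N\} = \{BH_\lambda\}\{GL_N\} = \{GL_N\}$. Both your proof and the paper's elide this step, and both ultimately defer to Ekedahl's machinery, so your proposal is no less complete than the published argument.
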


\begin{proof}
We need to show that the two group homomorphisms
\begin{align*}
    \lambda_t, \sigma_t: \Kstk \rightarrow (1+\Kstk[[t]])^\times
\end{align*}
are equal. Note that $\tau$ and $\mathbb{L}$ are both $\lambda$-linear in $\Kstk$ by Proposition \ref{K0 lambda defn} and Lemma \ref{lambda ext}. It is clear that $\tau$ is $\sigma$-linear in $\Kstk$; we argue $\mathbb{L}$ is as well. Indeed, we reduce immediately to checking $\{ \Symm^m (\mathscr{X} \times \mathbb{A}^1) \} = \mathbb\{ \Symm^m(\mathscr{X}) \times \mathbb{A}^m \}$. But this is clear, as $\Symm^m(\mathscr{X} \times \mathbb{A}^1) \rightarrow \Symm^m(\mathscr{X})$ is a vector bundle of rank $m$.

By Remark \ref{lambda extn rmk}, both $\lambda_t$ and $\sigma_t$ are $(T')^{-1} \mathbb{Z}[\tau, \mathbb{L}]$-linear. We therefore need only check that they agree on the class $\{ X \}$ of a variety $X$.

For a partition $\alpha$ of $m$, we denote by $\Sym^\alpha X$ the locally closed subvariety of $\Sym^m X$ with tuples consisting of $\lambda_1$ points of multiplicity $1$, $\lambda_2$ points of multiplicity two, and so on. Then
\begin{align*}
    \lambda^m( \{ X \} ) = \{ \Sym^m X \}_= \sum_{\alpha \vdash m} \{ \Sym^\alpha X \}.
\end{align*}
On the other hand, let $\Symm^\alpha X$ be the preimage of $\Sym^\alpha X$ under the map $\Symm^m X \rightarrow \Sym^m X$. Then we have 
\begin{align*}
    \Symm^\alpha X \cong \Sym^m X \times B(S_1^{\times(\alpha_1)}) \times B(S_2^{\times(\alpha_2)}) \ldots \times B(S_m^{\times(\alpha_m)}).
\end{align*}
Combining this with Theorem \ref{B S_m}, we conclude
\begin{align*}
    \sigma^m( \{ X \} ) = \{ \Symm^m X \} = \sum_{\alpha \vdash m} \{ \Symm^\alpha X \} = \sum_{\alpha \vdash m} \{ \Sym^\alpha X \}
    = \lambda^m( \{ X \}).
\end{align*}
\end{proof}

\begin{rmk}\label{sym = symm}
In fact, this same argument shows
\begin{align*}
    \{ \Sym^m X \} = \{ \Symm^m X \} \in K_0(\Stk_{\Sym^m X}^{\dim}).
\end{align*}
\end{rmk}

Applying Lemma \ref{lambda ext}, the $\lambda$-structure on $\Kstk$ extends to a graded $\lambda$-structure on
\begin{align*}
    U^{-1} \KstkS \cong T^{-1} \KdimS.
\end{align*}

\begin{prop}[cf. Theorem \ref{Sym commute D torsion}]
We have the identity
\begin{align*}
    \lambda^m \circ \mathbb{D} = \mathbb{D} \circ \lambda^m
\end{align*}
in $U^{-1}K_0(\Stk_k^{\dim})$ for all $m \geq 1$. Therefore, the same holds in $\Kdim$, up to $T$-torsion classes.
\end{prop}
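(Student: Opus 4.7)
The plan is to exploit the identification $\lambda^m = \sigma^m$ on $\Kstk$ from Proposition \ref{coincide}, together with Corollary \ref{DM in spr}, which places classes of smooth proper Deligne--Mumford stacks inside the $\mathbb{D}$-fixed subring $T^{-1}\Kspr \subset U^{-1}\Kstk$. I observe at the outset that both $\mathbb{D} \circ \lambda_u$ and $\lambda_u \circ \mathbb{D}$ are group homomorphisms from the additive group of $U^{-1}\Kstk$ to the multiplicative group $(1+u\,U^{-1}\Kstk[[u]])^\times$---the former because $\mathbb{D}$ is a ring automorphism acting coefficient-wise, the latter by the $\lambda$-ring axiom $\lambda_u(x+y) = \lambda_u(x) \cdot \lambda_u(y)$. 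It therefore suffices to verify their agreement on an additive generating set.

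First I would check the identity on elements of the form $\tau^a \mathbb{L}^b \{X\}$ for $X$ a smooth proper variety and $a, b \geq 0$. The $\lambda$-linearity of $\tau$ and $\mathbb{L}$ yields
\begin{align*}
\lambda_u(\tau^a \mathbb{L}^b \{X\}) \;=\; \lambda_{\tau^a \mathbb{L}^b u}(\{X\}) \;=\; \sum_{m \geq 0} \{\Symm^m X\}\, \tau^{am} \mathbb{L}^{bm}\, u^m,
\end{align*}
where $\lambda^m\{X\} = \{\Symm^m X\}$ by Proposition \ref{coincide}. By Corollary \ref{DM in spr}, each coefficient $\{\Symm^m X\}$ lies in $T^{-1}\Kspr$ and is therefore fixed by $\mathbb{D}$; so applying $\mathbb{D}$ coefficient-wise merely swaps $\tau \leftrightarrow \mathbb{L}$ in the above display and produces $\lambda_u(\mathbb{L}^a \tau^b \{X\}) = \lambda_u(\mathbb{D}(\tau^a \mathbb{L}^b \{X\}))$. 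Since Corollary \ref{bit module} presents $\Kdim$ as a $\mathbb{Z}[\tau, \mathbb{L}]$-module on the classes of smooth proper varieties, the identity propagates by additivity to all of $\Kdim$.

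It remains to extend from $\Kdim$ to the localization $T^{-1}\Kdim \cong U^{-1}\Kstk$. The multiplicative set $T$ is pointwise $\mathbb{D}$-fixed, since its generators $\tau\mathbb{L}$, $(\mathbb{L}-\tau)^2$, and the $[\mathbb{P}^n]$ are each symmetric in $\tau, \mathbb{L}$. The $\Kdim$ case just proven then shows that $\sigma_u(t) = \lambda_{-u}(t)^{-1} \in (1+u\,\Kdim[[u]])^\times$ is coefficient-wise $\mathbb{D}$-fixed for every $t \in T$. Using the identity $\lambda_u(\alpha/t) = \sigma_u(t)^{-1} \circ \lambda_u(\alpha)$ coming from the $R$-module structure described in Remark \ref{lambda extn rmk}, together with the fact that $\mathbb{D}$ is a ring automorphism of $(1+u\,U^{-1}\Kstk[[u]])^\times$ and so commutes with $\circ$ and its inverses, we conclude $\mathbb{D}\lambda_u(\alpha/t) = \sigma_u(t)^{-1} \circ \lambda_u(\mathbb{D}\alpha) = \lambda_u(\mathbb{D}(\alpha/t))$, completing the argument.

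The main obstacle is the base case $\lambda^m\{X\} \in T^{-1}\Kspr$ for $X$ smooth proper: all the stacky input of the paper is concentrated there, via Proposition \ref{coincide} (identifying $\lambda^m\{X\} = \{\Symm^m X\}$) and Corollary \ref{DM in spr} (which itself rests on the Bittner-style presentation for the Grothendieck group of Deligne--Mumford stacks). Once these are granted, the passage from a single generator to the full localization is essentially formal, provided one is careful to exploit additivity of $\lambda_u$ and $\mathbb{D}$-symmetry of the localizing set $T$ rather than attempting to make $\lambda^m$ into a ring homomorphism, which it is not.
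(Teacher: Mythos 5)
Your proposal is correct and follows essentially the same route as the paper: reduce to checking the identity on classes $\{X\}$ of smooth proper varieties via the $T^{-1}\mathbb{Z}[\tau,\mathbb{L}]$-linearity of $\lambda_u$ (and of its $\mathbb{D}$-conjugate), and then invoke Corollary \ref{DM in spr} together with Proposition \ref{coincide} to see that each $\lambda^m\{X\} = \{\Symm^m X\}$ is $\mathbb{D}$-fixed. The paper packages the reduction more concisely by observing that $\mathbb{D}\circ\lambda_u\circ\mathbb{D}$ is itself a graded $\lambda$-structure satisfying the hypotheses of Lemma \ref{lambda ext}, so Remark \ref{lambda extn rmk} applies to it directly; your two-stage argument (first on the image of $\Kdim$, then to the localization using $\mathbb{D}$-invariance of $T$) spells out the same content.
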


\begin{proof}
Consider the two graded $\lambda$-structures on $U^{-1}K_0(\Stk_k^{\dim})$ given by $\lambda^m$ and by $(\lambda')^m = \mathbb{D} \circ \lambda^m \circ \mathbb{D}$ for $m \geq 1$. We need to show that they are equal, i.e. that the two group homomorphisms
\begin{align*}
    \lambda_t, (\lambda')_t: U^{-1}\Kstk \rightarrow (1+U^{-1}\Kstk[[t]])^\times
\end{align*}
coincide. 

By Remark \ref{lambda extn rmk}, both $\lambda_t$ and $\lambda'_t$ are $T^{-1}\mathbb{Z}[\tau, \mathbb{L}]$-linear. As $U^{-1} \Kstk \cong T^{-1} \Kdim$ is generated by $\Kspr$ as a $T^{-1} \mathbb{Z}[\tau, \mathbb{L}]$-module, we need only check that they agree on the class $\{ X \}$ of a smooth and proper variety $X$ over $k$.

This is true, as Corollary \ref{DM in spr} says that both $\{ X \}$ and $\{ \Symm^m X \}$ are fixed by the involution $\mathbb{D}$.
\end{proof}
We upgrade this Proposition to a local result in Theorem $\ref{sym d-sing}$, in the context of $\mathbb{D}$-singularities.

\section{\texorpdfstring{$\mathbb{D}$}{��}-Singularities}\label{D sing section}

\subsection{Definition and Properties}
Let $X$ be a base scheme.
\begin{defn}
We say $X$ has $\mathbb{D}$-singularities if the identity morphism $\mathbbm{1}_X = [X \xrightarrow[]{id} X]$ of $\KdimX$ lies in the submodule $\KsprX$.
\end{defn}

Equivalently, $X$ has $\mathbb{D}$-singularities if $\pi_2(\mathbbm{1}_X) = 0$ in $\KsprX$. 

More explicitly, $X$ has $\mathbb{D}$-singularities if and only if it can be built by cutting into pieces a $\mathbb{Z}$-linear combination of regular schemes, proper over $X$ of relative dimension zero, and then gluing back together.

Our notion of $\mathbb{D}$-singularities refines ad hoc definitions in the literature, when $X$ is a variety over $k$.

\begin{exmp}\label{L exmp}
Recall that $X$ is said to have $\mathbb{L}$-rational singularities \cite{stablebirational} if there exists a resolution of singularities $f: \tilde{X} \rightarrow X$ such that, for every point $x \in X$, we have $[f^{-1}(x)] \equiv 1_{k(x)}$ mod $\mathbb{L}$ in $K_0(\Var_{k(x)})$. 

Suppose $X$ has $\mathbb{D}$-singularities. Then given a resolution $f:\tilde{X} \rightarrow X$, we have
\begin{align*}
    [X \rightarrow X] = [\tilde{X} \rightarrow X] + \tau \mathbb{L} \cdot \alpha
\end{align*} 
for some $\alpha \in \KsprX$. Pulling back along any point $x: \mathop{\mathrm{Spec}}k(x) \rightarrow X$, we obtain $[f^{-1}(x)] \equiv 1_{k(x)}$ modulo $\mathbb{L}$ in $K_0(\Var_{k(x)})$. Hence $X$ has $\mathbb{L}$-rational singularities.
\end{exmp}

\begin{exmp}\label{B exmp}
The authors of \cite{kontsevichtschinkel} associate an invariant $\partial_Z(X)$ to each closed subvariety $Z$ of $X$ containing the singular locus of $X$. Using our terminology, it is immediate to check that one can identify $\partial_Z(X)$ with
\begin{align*}
    -i^!(\pi_2 [U \rightarrow X]) \in K_0(\Var_Z^{\dim})/(\tau),
\end{align*}
where $U = X-Z$ and $i:Z \rightarrow X$. Assume that $Z$ is an irreducible divisor. Then, in this terminology, the pair $(X,Z)$ is said to have B-rational singularities if $[Z] = -i^!(\pi_2 [U \rightarrow X]) \in K_0(\Var_Z^{\dim})/(\tau)$.

If $Z$ has $\mathbb{D}$-singularities, then one checks $[Z \rightarrow X] = -\pi_2[U \rightarrow X] \in \KdimX$, so $(X, Z)$ indeed has B-rational singularities.
\end{exmp}

\begin{prop}\label{local}
Let $X$ be a $k$-scheme. Then the following are equivalent:
\begin{enumerate}
    \item $X$ has $\mathbb{D}$-singularities,
    \item there exists an open cover $\cup_{i \in I} U_i$ of $X$ such that $U_i$ has $\mathbb{D}$-singularities for each $i \in I$,
    \item $\mathop{\mathrm{Spec}}\mathscr{O}_{x,X}$ has $\mathbb{D}$-singularities for each closed point $x \in X$.
\end{enumerate}
\end{prop}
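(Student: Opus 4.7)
I plan to establish the cycle $(1) \Rightarrow (2) \Rightarrow (3) \Rightarrow (2) \Rightarrow (1)$. The first implication is trivial, taking $I = \{*\}$ and $U_* = X$. For $(2) \Rightarrow (3)$, given a closed point $x \in X$ lying in some $U_i$, the localization $\phi_x \colon \mathop{\mathrm{Spec}}\mathscr{O}_{x,X} \to U_i$ is flat, so $\phi_x^*$ is defined and preserves the smooth and proper subgroup: for $W$ regular and proper over $U_i$, the base change $W \times_{U_i} \mathop{\mathrm{Spec}}\mathscr{O}_{x,X}$ is again regular (its local rings are localizations of those of $W$) and proper. Since $\phi_x^* \mathbbm{1}_{U_i} = \mathbbm{1}_{\mathscr{O}_{x,X}}$, the local ring inherits $\mathbb{D}$-singularities. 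For $(3) \Rightarrow (2)$, Theorem \ref{Spreading Out} identifies $K_0(\Var_{\mathscr{O}_{x,X}}^{\spr})$ with $\varinjlim K_0(\Var_U^{\spr})$, the limit taken over open neighborhoods $U$ of $x$; the vanishing $\pi_2 \mathbbm{1}_{\mathscr{O}_{x,X}} = 0$ is then already witnessed at finite stage by some open $U_x \ni x$ with $\pi_2 \mathbbm{1}_{U_x} = 0$, and the $U_x$ form the required open cover of $X$.

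The technical heart is $(2) \Rightarrow (1)$: vanishing of $\alpha := \pi_2 \mathbbm{1}_X$ on each member of an open cover must imply vanishing on $X$. By Noetherianness I pass to a finite cover, and by induction on its cardinality (setting $V = U_1 \cup \ldots \cup U_{n-1}$, which is itself $\mathbb{D}$-singular by induction) I reduce to a two-element cover $X = U_1 \cup U_2$ with both $U_j$ having $\mathbb{D}$-singularities. By Theorem \ref{str iso} the inclusion $K_0(\Var_X^{\spr}) \hookrightarrow K_0(\Var_X^{\dim})$ is a direct summand, so it suffices to show $\alpha = 0$ in $K_0(\Var_X^{\dim})$. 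Writing $Y = X \setminus U_1$, with inclusions $j_1 \colon U_1 \hookrightarrow X$ and $i \colon Y \hookrightarrow X$, the split short exact sequence from Section \ref{gluing map} reduces the vanishing of $\alpha$ to the two conditions $j_1^* \alpha = 0$ and $i^* \alpha = 0$. Since $j_1^*$ preserves the smooth and proper subgroup (Lemma \ref{smooth}) and is $\mathbb{Z}[\tau, \mathbb{L}]$-linear, it commutes with $\pi_2$, so $j_1^* \alpha = \pi_2 \mathbbm{1}_{U_1} = 0$. Since $Y \subset U_2$, the inclusion $i$ factors as $i = j_2 \circ i'$ with $i' \colon Y \hookrightarrow U_2$ closed, giving $i^* \alpha = (i')^* j_2^* \alpha = (i')^* \pi_2 \mathbbm{1}_{U_2} = 0$.

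The main obstacle lies in this descent step. The awkwardness is that closed pullback $i^*$ does not generally preserve the smooth and proper subgroup, so one cannot detect the vanishing of $\alpha \in K_0(\Var_X^{\spr})$ directly by computing an image in $K_0(\Var_Y^{\spr})$. The maneuver above circumvents this by climbing one level up to $K_0(\Var_X^{\dim})$, where the split SES is available and $i^* \alpha$ makes sense unconditionally, and then rerouting the closed piece through the second open $U_2$, on which $\alpha$ already vanishes by hypothesis. It is precisely the direct summand structure granted by Theorem \ref{str iso}, i.e.\ by the existence of the involution $\mathbb{D}$, that makes this passage valid.
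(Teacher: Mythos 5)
Your proof is correct, and the overall strategy is close to the paper's, but the key implication $(2)\Rightarrow(1)$ is handled by a genuinely different device. The paper applies the inclusion-exclusion identity
\begin{align*}
\pi_2(\mathbbm{1}_X) = \sum_{\emptyset \neq J \subset I} (-1)^{|J|-1} (j_{U_J})_! \, j_{U_J}^* \, \pi_2(\mathbbm{1}_X)
\end{align*}
over a finite cover all at once, then commutes $\pi_2$ past the smooth pullbacks $j_{U_J}^*$ and observes each term vanishes. You instead induct on the size of the cover, reduce to the two-element case $X = U_1 \cup U_2$, pass to $K_0(\Var_X^{\dim})$ via the direct-summand structure from Theorem \ref{str iso}, and then split $\alpha = \pi_2\mathbbm{1}_X$ against the open-closed decomposition $U_1 \sqcup Y$, cleverly rerouting the closed piece $Y$ through $U_2$ to exploit the vanishing there. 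This is an honest unrolling of the same inclusion-exclusion idea; the paper's version is more compact, while yours makes explicit the maneuver of lifting to $K_0(\Var_X^{\dim})$ to compensate for the fact that $i^*$ does not preserve the smooth-proper subgroup. Both are valid. A secondary difference: for $(2)\Rightarrow(3)$ you pull back along the flat localization $\phi_x\colon\mathop{\mathrm{Spec}}\mathscr{O}_{x,X}\to U_i$ and argue directly that this preserves the smooth-proper subgroup, whereas the paper deduces the equivalence $(2)\Leftrightarrow(3)$ entirely from the spreading-out theorem. Your version works but carries a mild technical wrinkle: $\phi_x$ is not of finite type, so $\phi_x^*$ is not literally one of the pullbacks set up in the paper's functoriality section, and you would want to check it respects the grading; the spreading-out route avoids this entirely and is what you ultimately invoke for $(3)\Rightarrow(2)$ anyway, so it is simpler to use it in both directions.
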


\begin{proof}
By spreading out, Theorem \ref{Spreading Out}, we see that $\mathop{\mathrm{Spec}}\mathscr{O}_{x,X}$ has $\mathbb{D}$-singularities if and only if $U$ has $\mathbb{D}$-singularities for some (affine) open subset $U$ containing $s$. Thus properties $(2)$ and $(3)$ are equivalent. Clearly $(1)$ implies $(2)$. For the implication $(2)$ implies $(1)$, we use inclusion-exclusion:
\begin{align*}
    \pi_2(\mathbbm{1}_X)= \sum_{\emptyset \neq J \subset I} (-1)^{|J|-1} (j_{U_J})_! j_{U_J}^* \pi_2(\mathbbm{1}_{X}).
\end{align*}
Since $\pi_2$ commutes with pullback along a smooth morphism, we have
\begin{align*}
    \pi_2(\mathbbm{1}_X)= \sum_{\emptyset \neq J \subset I} (-1)^{|J|-1} (j_{U_J})_! \pi_2(\mathbbm{1}_{U_J}).
\end{align*}
Each term on the right side is zero, hence so is $\pi_2(\mathbbm{1}_X)$.
\end{proof}

Let's look at some example to get a better sense of the meaning of $\mathbb{D}$-singularities for complex varieties $X$.

\begin{exmp}
If $X$ is smooth, it has $\mathbb{D}$-singularities.
\end{exmp}

\begin{exmp}\label{curve example}
If $C$ is a nodal curve with a node at $p \in C$, then
\begin{align*}
    [C] = [\tilde{C}] - \tau \cdot [p],
\end{align*}
where $\tilde{C} \rightarrow C$ is the normalization. Thus $\pi_2[C] = - [p] \neq 0$, so $C$ does not have $\mathbb{D}$-singularities. 

On the other hand, if $C$ has a cusp at $p \in C$, then $[C] = [\tilde{C}]$, so $C$ has $\mathbb{D}$-singularities.

More generally, a curve $C$ has $\mathbb{D}$-singularities if and only if it is unibranched at each point.
\end{exmp}

\begin{prop}
Suppose a variety $X$ of dimension $n+1$ has isolated singularities, supported at a point $p \in X$. Choose a log resolution $f: \tilde{X} \rightarrow X$, so $f^{-1}(p) = \cup_{i \in I} d_i D_i$ is a snc divisor. Then $X$ has $\mathbb{D}$-singularities if and only if
\begin{align*}
    \sum_{\emptyset \neq J \subset I} (-1)^{|J|-1} [D_J \times \mathbb{P}^{|J|-1}] = [\mathbb{P}^n] \in K_0(\Var_{\mathbb{C}}^{\dim}).
\end{align*}
\end{prop}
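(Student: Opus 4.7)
The plan is to apply the criterion that $X$ has $\mathbb{D}$-singularities if and only if $\pi_2(\mathbbm{1}_X) = 0$, and to evaluate this projection directly using the stratification $X = U \sqcup \{p\}$, where $U = X \setminus \{p\}$. Let $j \colon U \hookrightarrow X$ denote the open immersion and $i \colon \{p\} \hookrightarrow X$ the closed embedding. Since $i_!$ lowers graded degree by $\codim_X \{p\} = n+1$, the cut-and-paste relation read in degree zero is
\begin{align*}
    \mathbbm{1}_X = j_!(\mathbbm{1}_U) + \tau^{n+1}\,i_!(\mathbbm{1}_{\{p\}}).
\end{align*}

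I would then compute $\pi_2$ of each summand. By Lemmas \ref{proper} and \ref{smooth}, $i_!$ and $j^*$ preserve smooth-and-proper subgroups and commute with the $\tau$-action, and it follows from the decomposition of Theorem \ref{str iso} that $\pi_2 \circ i_! = i_! \circ \pi_2$ and $\pi_2 \circ j^* = j^* \circ \pi_2$. The first identity, combined with $\pi_2(\tau^{n+1}) = [\mathbb{P}^n]$ from Example \ref{P^k example}, yields $\pi_2\bigl(\tau^{n+1} i_!\mathbbm{1}_{\{p\}}\bigr) = i_!\bigl([\mathbb{P}^n]\bigr)$. For the other summand, $j^*\pi_2(j_!\mathbbm{1}_U) = \pi_2(\mathbbm{1}_U) = 0$, so by the direct sum decomposition of Section \ref{gluing map} the class $\pi_2(j_!\mathbbm{1}_U)$ lies in the image of $i_!$; its $i^*$-component equals $-\mathfrak{g}_U^{\{p\}}(\mathbbm{1}_U)$ by definition of the gluing morphism. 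Proposition \ref{genl gluing application} applied to the log resolution $\tilde{X} \to X$ identifies this with $-\sum_{\emptyset \neq J \subset I}(-1)^{|J|-1}[D_J \times \mathbb{P}^{|J|-1}]$; only the reduced divisor $\bigcup_i D_i$ enters the formula, so the multiplicities $d_i$ drop out.

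Summing,
\begin{align*}
    \pi_2(\mathbbm{1}_X) = i_!\!\left([\mathbb{P}^n] - \sum_{\emptyset \neq J \subset I}(-1)^{|J|-1}[D_J \times \mathbb{P}^{|J|-1}]\right).
\end{align*}
Since $i_!$ is injective (having $i^*$ as left inverse) and $\Kspr \hookrightarrow \Kdim$ is injective by Theorem \ref{str iso}, the vanishing of $\pi_2(\mathbbm{1}_X)$ is equivalent to the stated identity. The main technical point is to justify cleanly the commutation of $\pi_2$ with $i_!$ and $j^*$ and its behaviour under $\tau$-multiplication, but these follow from Lemmas \ref{proper}--\ref{smooth} together with the structure theorem; after that the argument is pure bookkeeping and a citation of Proposition \ref{genl gluing application}.
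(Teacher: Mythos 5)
Your argument is correct, and it is a slightly different packaging of the same underlying inclusion--exclusion. The paper computes directly, writing
\begin{align*}
\mathbbm{1}_X = \sum_{J \subset I}(-1)^{|J|}\tau^{|J|}[D_J] + \tau^{n+1}[p] \in K_0(\Var_X^{\dim}),
\end{align*}
and then applies $\pi_2$ term by term using $\mathbb{Z}[\tau+\mathbb{L},\tau\mathbb{L}]$-linearity of $\pi_2$ on $\KsprX$ together with $\pi_2(\tau^{k+1}) = [\mathbb{P}^k]$ from Example \ref{P^k example}. You instead split $\mathbbm{1}_X = j_!\mathbbm{1}_U + \tau^{n+1}\,i_!\mathbbm{1}_{\{p\}}$, use the commutations $\pi_2 i_! = i_!\pi_2$ and $\pi_2 j^* = j^*\pi_2$ (justified by Lemmas \ref{proper} and \ref{smooth}), observe that $\pi_2(j_!\mathbbm{1}_U)$ is killed by $j^*$ and hence lies in $\mathrm{im}(i_!)$, and then cite Proposition \ref{genl gluing application} to evaluate its $i^*$-component as $-\mathfrak{g}_U^{\{p\}}(\mathbbm{1}_U)$. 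Since Proposition \ref{genl gluing application} is itself proved by exactly the inclusion--exclusion that the paper redoes inline here, the two proofs are the same computation; what you gain is modularity (the gluing-morphism formalism is reused rather than recomputed, and the decomposition makes transparent why only data over $p$ survives), at the cost of invoking a little more machinery. The sign bookkeeping, the degree shifts for $i_!$, and the final reduction from vanishing of $\pi_2(\mathbbm{1}_X)$ in $\KsprX$ to the stated identity in $\Kdim$ (via injectivity of $i_!$ and of $\Kspr \hookrightarrow \Kdim$) are all handled correctly.
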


\begin{proof}
We compute, by inclusion-exclusion,
\begin{align*}
    [X] = \sum_{J \subset I} (-1)^{|J|} \tau^{|J|} [D_J] + \tau^{n+1},
\end{align*}
hence
\begin{align*}
    -\pi_2[X] = \sum_{\emptyset \neq J \subset I} (-1)^{|J|-1} [D_J \times \mathbb{P}^{|J|-1}] - [\mathbb{P}^n].
\end{align*}
\end{proof}

\begin{exmp}
Let $X$ be a cone over a smooth proper $k$-variety $Y$ of dimension $n$. Then $X$ has $\mathbb{D}$-singularities if and only if $[Y] = [\mathbb{P}^{n}]$ in $K_0(\Var^{\dim}_\mathbb{C})$.

For instance, a cone over a Veronese embedding of $\mathbb{P}^n$ or over an odd-dimensional quadric has $\mathbb{D}$-singularities.

Moreover, a cone over a smooth stably rational variety whose class is not that of $\mathbb{P}^n$ provides an example of a variety with $\mathbb{L}$-rational but not $\mathbb{D}$-singularities.
\end{exmp}

\begin{exmp}
Du Val surface singularities are $\mathbb{D}$-singular, as can be seen from explicit resolutions.
\end{exmp}

\begin{exmp}
Let $Z \subset \mathbb{P}^{n-1}$ be a smooth hypersurface of degree $d \leq n$. Embed $\mathbb{P}^{n-1}$ in $\mathbb{P}^n$ as a hyperplance and consider the blow-up $Y = \Bl_Z \mathbb{P}^n$ of $\mathbb{P}^n$ along $Z$.

The class $\ell-d e \in \overline{\NE}(Y)$ of the strict transform of a line in $\mathbb{P}^{n-1}$ pairs to $K_Y$ with intersection number
\begin{align*}
    K_Y \cdot \big(\ell-d e \big) = \big(-(n+1)H+E \big) \cdot \big(\ell  - d e \big) = -(n+1) + d < 0.
\end{align*}
This implies there is a contraction morphism $\cont_{\ell- d e}: Y \rightarrow X$. We view $X$ as a compactification of $\mathbb{A}^n$, with boundary the cone over $Z$ with vertex $p$. In fact, $X$ has an isolated singularity at $p$, locally isomorphic to a cyclic quotient $\mathbb{A}^n / \mu_{d-1}$.

We show that $X$ has $\mathbb{D}$-singularities. Indeed, inside $K_0(\Var_X^{\dim})$,
\begin{align*}
    [X] =& [Y] - \tau \cdot [\mathbb{P}^{n-1} \rightarrow p] + \tau^n [p \rightarrow p] \\
    =& [Y] - \tau \mathbb{L} \cdot [\mathbb{P}^{n-2} \rightarrow p].
\end{align*}
\end{exmp}

\begin{exmp}\label{rhm example}
One can show directly that:
\begin{enumerate}
    \item $X = ( x_1^2 + \ldots + x_{n-1}^2 + x_n^d = 0 ) \subset \mathbb{A}^n$ has $\mathbb{D}$-singularities if and only if either $n$ or $d$ is odd;
    \item $X = ( x_1^2 + \ldots + x_{n-2}^2 + x_{n-1}^3 + x_n^3 = 0 ) \subset \mathbb{A}^n$ has $\mathbb{D}$-singularities if and only if $n$ is odd;
    \item $X = ( x_1^{d-1} + \ldots + x_{n-1}^{d-1} + x_n^d = 0 ) \subset \mathbb{A}^n$ always has $\mathbb{D}$-singularities.
\end{enumerate}

In this example, $X$ has $\mathbb{D}$-singularities if and only if $X$ is a rational homology manifold (see Definition \ref{rhm defn}). Indeed, Milnor \cite[Sections $7$ and $8$]{milnor} shows that $(x_1^{a_1} + \ldots x_n^{a_n} = 0)$ is a rational homology manifold if and only if there is no integer of the form 
\begin{align*}
    \frac{b_1}{a_1} + \ldots + \frac{b_n}{a_n}, \hspace{.4cm} \mathrm{where} \hspace{.2cm} b_i \in \{ 1, 2,  \ldots , a_i-1 \}
\end{align*}
\end{exmp}

\begin{defn}\label{rhm defn}
Let $X$ be a complex algebraic variety of dimension $n$. Say $X$ is a rational homology manifold if for any point $x \in X$,
\begin{align*}
    H^i_{\{x\}}(X, \mathbb{Q}) =  
    \begin{cases}
                \mathbb{Q} & \text{if } i = 2n \\
                0 & \text{otherwise}
            \end{cases}
\end{align*}
\end{defn}
A rational homology manifold should be thought of as being smooth for the purposes of rational (co)homology theories. For example, rational homology manifolds satisfy Poincaré duality.

There is a relationship between $\mathbb{D}$-singularities and rational homology manifolds more generally.

\begin{prop}\label{D-sing implies rhm}
Suppose $X$ is a local complete intersection with isolated singularities. If $X$ has $\mathbb{D}$-singularities, then $X$ is a rational homology manifold.
\end{prop}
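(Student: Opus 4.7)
The plan is to translate the motivic hypothesis into the language of mixed Hodge modules via $\chi^c_X$, and then to exploit the perverse/Cohen--Macaulay structure of $\mathbb{Q}_X^H[n]$ provided by the LCI hypothesis to pin down the local cohomology at each singular point.

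First, I apply $\chi^c_X$ to the identity $\mathbb{D}_X \mathbbm{1}_X = \mathbbm{1}_X$ (which holds since $\mathbbm{1}_X \in \KsprX$). Using the Tate-twisted commutation of $\chi^c$ with Verdier duality from Subsection \ref{mhm subsection}, this yields, with $n = \dim X$,
\begin{equation*}
[\mathbb{Q}_X^H] = [\mathbb{D}_X\mathbb{Q}_X^H(n)] \quad \text{in } K_0(\MHM(X)).
\end{equation*}
On the smooth locus $U = X \setminus \Sigma$, where $\Sigma = \{x_1, \ldots, x_r\}$ is the isolated singular set, ordinary Poincar\'e duality makes the two sides coincide, so the difference is supported on $\Sigma$. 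Restricting to each $x \in \Sigma$ via $i_x^*$ and using $i_x^* \mathbb{D}_X = \mathbb{D}\, i_x^!$, I obtain in $K_0(\MHS)$
\begin{equation*}
[\mathbb{Q}^H] = [\mathbb{D}(i_x^! \mathbb{Q}_X^H)(n)];
\end{equation*}
equivalently, the class of the local cohomology complex $R\Gamma_{\{x\}}(X, \mathbb{Q})^H$ in $K_0(\MHS)$ equals that of the Tate Hodge structure $\mathbb{Q}(-n)$ placed in cohomological degree $2n$.

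Next, I invoke the LCI hypothesis. Since $X$ is a local complete intersection of dimension $n$, it is Cohen--Macaulay, so $\mathbb{Q}_X^H[n]$ is a perverse Hodge module and $H^i_{\{x\}}(X, \mathbb{Q}) = 0$ outside the range $n \leq i \leq 2n$. The top piece $H^{2n}_{\{x\}}$ always contains a canonical copy of $\mathbb{Q}(-n)$ of weight $2n$, coming from the local orientation class. By Steenbrink's analysis of the vanishing cohomology of an isolated LCI singularity, together with the Hamm--L\^e description of its Milnor fiber as a bouquet of $n$-spheres, the Hodge weights on the remaining local cohomology groups $H^i_{\{x\}}$ for $n \leq i < 2n$ are strictly less than $2n$. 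The Hodge--Deligne identity above then forbids any nonzero contributions from those groups, forcing $H^{2n}_{\{x\}} = \mathbb{Q}$ and $H^i_{\{x\}} = 0$ for $i < 2n$, which is precisely the rational homology manifold condition at $x$.

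The hard part will be the last step: everything rests on the weight separation between the non-top local cohomology and the orientation class $\mathbb{Q}(-n)$ in $H^{2n}_{\{x\}}$. Without the LCI hypothesis, neither the cohomological range nor the weight control is available, and the $K_0(\MHS)$ equality could admit non-trivial solutions; with it, the Steenbrink weight bounds rule out cancellations and force the desired vanishing.
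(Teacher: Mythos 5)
Your opening moves match the paper exactly: apply $\chi^c_X$ to the $\mathbb{D}$-fixedness of $\mathbbm{1}_X$, commute with Verdier duality up to twist, and arrive at $[i_x^!\mathbb{Q}_X^H]=[\mathbb{Q}^H(-n)]$ in $K_0(\MHS)$ at each singular point. That part is correct and is the paper's argument verbatim.

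The weight analysis, however, has a genuine gap. You claim that for $n\le i<2n$ the weights on $H^i_{\{x\}}(X,\mathbb{Q})$ are strictly less than $2n$, and then that this forbids any nonzero contribution from those groups. Neither assertion holds. First, the claimed bound is false for $H^{n+1}_{\{x\}}\cong H^n(L)$ (here $L$ is the link): the cited Durfee--Saito result gives $H^{n+1}_{\{x\}}$ weights $\ge n+1$, and Poincar\'e duality on the oriented $(2n-1)$-manifold $L$ pairs $H^n(L)$ with $H^{n-1}(L)$ (weights $\le n-1$), so $H^n(L)$ can certainly carry weight exactly $2n$. Second, even if all $H^i_{\{x\}}$ with $i<2n$ did have weight $<2n$, your $K_0(\MHS)$ identity only constrains the \emph{alternating} sum $\sum_{i<2n}(-1)^i[H^i_{\{x\}}]$ in weights $<2n$ to vanish; this permits cancellation between consecutive degrees and does not force each group to vanish. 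A common upper bound on weights is the wrong invariant to extract.

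The fix is the one the paper uses. The LCI hypothesis gives, via Hamm's connectivity theorem (the link is $(n-2)$-connected), the sharper concentration $H^i_{\{x\}}=0$ unless $i\in\{n,n+1,2n\}$ --- not merely the perverse range $[n,2n]$ that you invoke. Since $H^{2n}_{\{x\}}=\mathbb{Q}(-n)$ by the orientation class, the $K_0(\MHS)$ identity reduces to $[H^n_{\{x\}}]=[H^{n+1}_{\{x\}}]$. The decisive input from Durfee--Saito is then not a shared upper bound but \emph{disjointness} of weight ranges: $H^n_{\{x\}}$ has weights $\le n-1$ while $H^{n+1}_{\{x\}}$ has weights $\ge n+1$. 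Passing to weight-graded pieces (exact on $\MHS$, and detecting zero objects), equality in $K_0(\MHS)$ of two classes with disjoint weight support forces both to vanish. This is where the cancellation you would otherwise have to worry about is ruled out.
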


\begin{proof}
We may assume $X$ has dimension $n > 1$ by Example \ref{curve example}. Let $i_x: \mathop{\mathrm{Spec}}\mathbb{C} \rightarrow X$ be a singular point. Using the notation of Subsection \ref{mhm subsection}, we compute
\begin{align*}
    [i_x^! \mathbb{Q}_X^H] &= i_x^! \chi \mathbbm{1}_X  \\
                        &= \mathbb{D} i_x^* \mathbb{D} \chi \mathbbm{1}_X \\
                        &= (\mathbb{D} \chi i_x^* \mathbb{D} \mathbbm{1}_X)(-n) \\
                        &= [\mathbb{Q}^H(-n)]
\end{align*}
inside $K_0(\MHS(\mathbb{C}))$, where the last step uses our assumption that $X$ has $\mathbb{D}$-singularities.

By \cite{hamm}, $H^i(i_x^! \mathbb{Q}_X) = H^i_{\{x\}}(X, \mathbb{Q}) = 0$ unless $i = n, n+1$, or $i = 2n$, as $X$ is a local complete intersection. Combining this with the above, we deduce
\begin{align*}
    [H^{n}_{\{x\}}(X, \mathbb{Q})] = [H^{n+1}_{\{x\}}(X, \mathbb{Q})] \in K_0(\MHS(\mathbb{C})).
\end{align*}
But $H^{n}_{\{x\}}(X, \mathbb{Q})$ has weights $\leq (n-1)$ and $H^{n+1}_{\{x\}}(X, \mathbb{Q})$ has weights $\geq (n+1)$ by \cite{link}. We conclude that $H^{n}_{\{x\}}(X, \mathbb{Q}) = H^{n+1}_{\{x\}}(X, \mathbb{Q}) = 0$, and hence $X$ is a rational homology manifold.
\end{proof}

Nevertheless, there is no direct implication between $\mathbb{D}$-singularities and rational homology manifolds. Indeed, related to the failure of Noether's question, there exist quotient singularities (hence rational homology manifolds) which are not $\mathbb{D}$-singularities; see \cite{esserquotient} for details. See also the following example.

\begin{exmp}\label{rhm example}
Let $S$ be a surface with an isolated cuspidal singularity, i.e. such that the exceptional divisor of the minimal resolution is a cycle of rational curves. Let $T$ be the surface obtained by gluing the singular point of $S$ to a smooth point (see \cite[Section $3$]{schwede}). It is straightforward to compute that $T$ has $\mathbb{D}$-singularities, but it is not a rational homology manifold, as the link of the singular point is disconnected.
\end{exmp}

\subsection{Toric Orbifolds}
The following Proposition is crucial in the proof of the more general Thereom \ref{abelian sing}.

\begin{prop}\label{toric}
Let $X$ be a simplicial toric variety (or, equivalently, a toric variety with finite abelian quotient singularities). Then $X$ has $\mathbb{D}$-singularities.
\end{prop}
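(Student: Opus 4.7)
The plan is to argue by induction on $n = \dim X$. The cases $n \leq 1$ are immediate since simplicial cones in those dimensions are smooth. For the inductive step, by Proposition~\ref{local} the property of having $\mathbb{D}$-singularities is local, so we reduce to $X = U_\sigma$ affine for a simplicial cone $\sigma$ of dimension $n$. If $\sigma$ is smooth, $X$ is smooth and we are done; otherwise, choose a smooth refinement $\Sigma'$ of $\Sigma = \{\text{faces of } \sigma\}$ and let $f : \tilde X = X_{\Sigma'} \to X$ be the induced toric resolution. Since $\tilde X$ is smooth and $f$ is proper and birational, $[\tilde X \to X]_0 \in \KsprX$, so it suffices to prove that the difference $\delta := \mathbbm{1}_X - [\tilde X \to X]_0$ lies in $\KsprX$.

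Using torus-equivariance of $f$, for each face $\tau \subseteq \sigma$ the restriction $f^{-1}(O_\tau) \to O_\tau$ is a Zariski-locally trivial fibration whose fiber $F_\tau$ is a smooth complete toric variety of dimension $\dim \tau$ (a point when $\tau$ is smooth). Stratifying $X$ and $\tilde X$ by torus orbits yields
\begin{align*}
\delta = \sum_{\tau \subseteq \sigma \text{ non-smooth}} \bigl(\tau^{\dim \tau} - [F_\tau]\bigr) \cdot [O_\tau \to X]_{-\dim \tau}.
\end{align*}
For each non-smooth $\tau$, the orbit closure $V(\tau)$ is a simplicial toric variety of dimension $n - \dim \tau < n$, so the inductive hypothesis gives $\mathbbm{1}_{V(\tau)} \in K_0(\Var_{V(\tau)}^{\spr})$. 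Factoring $[O_\tau \to X]_{-\dim \tau}$ through the proper closed embedding $V(\tau) \hookrightarrow X$ (which preserves the smooth-proper submodule by Lemma~\ref{proper}), writing the extension-by-zero $[O_\tau \hookrightarrow V(\tau)]_0 = \mathbbm{1}_{V(\tau)} - [\partial V(\tau) \hookrightarrow V(\tau)]_0$, and iterating the inductive hypothesis down the boundary filtration by orbit closures—combined with the Bialynicki--Birula decomposition $[F_\tau] \in \mathbb{Z}[\mathbb{L}] \subseteq \Kspr$ and the explicit formula $\tau^{\dim \tau} = -\tau \mathbb{L} \cdot [\mathbb{P}^{\dim \tau - 2}] + \tau \cdot [\mathbb{P}^{\dim \tau - 1}]$ from Example~\ref{P^k example}—allows us to rewrite $\delta$ as an element of $\KsprX$.

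I expect the main obstacle to be the delicate cancellation of non-smooth-proper contributions across the orbit stratification. The individual summands above need not lie in $\KsprX$, because $\pi_2(\tau^{\dim \tau}) = [\mathbb{P}^{\dim \tau - 1}]$ is nonzero for $\dim \tau \geq 1$, and $[O_\tau \to X]$ is not itself in the smooth-proper submodule (the open embedding $O_\tau \hookrightarrow V(\tau)$ is not proper). The $\pi_2$-contributions from different strata must instead cancel collectively, and establishing this—by carefully tracking how the boundary filtration of each $V(\tau)$ interacts with the combinatorial data of the refinement $\Sigma'$—constitutes the main technical content, in the spirit of Bittner's arguments in \cite{bittnertoric}.
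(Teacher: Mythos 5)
Your outline has the right skeleton (induction on dimension, toric resolution, stratification by torus orbits, passing through orbit closures $V(\tau)$), and it parallels the paper's strategy at a high level. But it stops exactly at the step that is the actual mathematical content of the proposition, and you say so yourself: ``establishing this\ldots constitutes the main technical content.'' That is not a proof; it is a statement of where the proof is missing.

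Concretely, the missing step is a combinatorial symmetry statement. After stratifying by orbits, the coefficient attached to each orbit closure $V(\tau')$ is a polynomial in $\tau$ and $\mathbb{L}$ built out of the fan data of the resolution (the paper's polynomial
\begin{align*}
p_{\tau'}(s,t) = \sum_{\tau \subset \tau'} \sum_{\phi(\sigma)=\tau} (-1)^{|\tau|} (s-t)^{|\tau|-|\sigma|} t^{|\tau'|-|\tau|+|\sigma|}
\end{align*}
in the notation of Lemma~\ref{sym toric lem}). The whole argument hinges on showing $p_{\tau'}(\mathbb{L},\tau) = p_{\tau'}(\tau,\mathbb{L})$, i.e.\ that this coefficient lies in $\mathbb{Z}[\tau+\mathbb{L}, \tau\mathbb{L}]$; the paper's Lemma~\ref{sym toric lem} establishes this as a consequence of the Dehn--Somerville equations for simplicial fans. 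Without an argument of this type, the individual summands $\tau^{\dim\tau}-[F_\tau]$ in your expression for $\delta$ visibly fail to be $\mathbb{D}$-invariant (you note this yourself: $\pi_2(\tau^{\dim\tau}) \neq 0$), and ``iterating the inductive hypothesis down the boundary filtration'' gives you no mechanism to cancel those $\pi_2$-contributions across strata. The Bialynicki--Birula decomposition of the fibers $F_\tau$ only tells you each $[F_\tau] \in \mathbb{Z}[\tau,\mathbb{L}]$; it does not give symmetry in $\tau$ and $\mathbb{L}$ of the net coefficient of each $V(\tau')$.

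Two smaller slips worth flagging: the fiber $F_\tau$ of the toric resolution over a point of the orbit $O_\tau$ is a complete toric variety of dimension $\dim\tau - 1$, not $\dim\tau$ (for smooth $\tau$ this correctly gives a point); and with that correction your degree bookkeeping in the displayed formula for $\delta$ needs to be revisited, since $(\tau^{\dim\tau} - [F_\tau])$ is then not homogeneous. Also note the paper does not bother reducing to the affine $U_\sigma$; that reduction is harmless but not needed once one works with the polynomials $p_{\tau'}$ directly. In short: the framework is right, the one lemma that makes it work (Dehn--Somerville symmetry of the orbit-counting polynomial) is absent, and that lemma is not a technicality --- it is the proposition.
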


This result is essentially proved by Bittner \cite[Corollary $4.2$]{bittnertoric}, using different language. For the convenience of the reader, we will sketch the proof. It relies on the following combinatorial lemma, which is a consequence of the Dehn-Somerville equations.

\begin{lem}\label{b}[cf. \cite[Lemma $4.6$]{bittnertoric}]\label{sym toric lem}
Let $\Sigma$ be a simplicial refinement of the cone $\Delta$ in $\mathbb{R}^n$ spanned by the standard basis vectors. For each cone $\sigma \in \Sigma$, let $\phi(\sigma) \in \Delta$ be the smallest cone in $\Delta$ containing $\sigma$. Then, denoting by $|\sigma|$ the dimension of the simplex $\sigma$, the degree $n$ polynomial
\begin{align*}
    p_\Delta(s,t) = \sum_{\tau \in \Delta} \sum_{\phi(\sigma) = \tau} (-1)^{|\tau|} (s-t)^{|\tau|-|\sigma|} t^{n-|\tau|+|\sigma|},
\end{align*}
is symmetric in $s$ and $t$.
\end{lem}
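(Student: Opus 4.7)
The plan is to recognize each inner sum as an interior $h$-polynomial of a triangulated ball, apply the Macaulay--Stanley reciprocity for simplicial balls to swap interior and full $h$-polynomials with arguments reversed, and then reindex by $\sigma$ and collapse using the binomial theorem to recover $p_\Delta(t,s)$.

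First, for each face $\tau$ of $\Delta$ of dimension $d$, the cones $\{\sigma\in\Sigma : \sigma\subset\tau\}$ form a simplicial subfan refining $\tau$. Intersecting with a hyperplane transverse to $\tau$ yields an abstract simplicial complex $L_\tau$ that triangulates the $(d-1)$-ball $\tau\cap S^{n-1}$; by definition, the cones $\sigma$ with $\phi(\sigma)=\tau$ correspond precisely to the interior faces $L_\tau^\circ$ (with the convention that the zero cone is a boundary face of $L_\tau$ whenever $d\geq 1$, which matches $\phi(\{0\})=\{0\}\neq\tau$). Introducing the bivariate $h$-polynomial $h_B(s,t)=\sum_{\sigma\in B}(s-t)^{d-|\sigma|}t^{|\sigma|}$ of a subcomplex $B$ of $L_\tau$, we rewrite
\[
p_\Delta(s,t) \;=\; \sum_{\tau\subset\Delta} (-1)^{|\tau|}\, t^{n-|\tau|}\, h_{L_\tau^\circ}(s,t).
\]

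The key input is the bivariate Macaulay--Stanley reciprocity for simplicial $(d-1)$-balls, $h_{B^\circ}(s,t)=h_B(t,s)$ (equivalent to $h_i(B^\circ)=h_{d-i}(B)$ coefficientwise). Substituting this into each summand turns $p_\Delta(s,t)$ into a sum over pairs $(\tau,\sigma)$ with $\sigma\in L_\tau$. Exchanging the order of summation, each $\sigma\in\Sigma$ contributes once for every face $\tau\geq\phi(\sigma)$. Identify faces of $\Delta$ with subsets of $[n]$ via $\tau_J=\mathrm{Cone}(e_j:j\in J)$; if $\phi(\sigma)=\tau_J$ and $j=|J|$, the admissible $\tau$ are indexed by $K\subset[n]\setminus J$, and the inner sum collapses via the binomial theorem:
\[
\sum_{K\subset[n]\setminus J} (-1)^{j+|K|}\, t^{n-j-|K|}\,(t-s)^{j+|K|-|\sigma|} \;=\; (-1)^{|\phi(\sigma)|}(t-s)^{|\phi(\sigma)|-|\sigma|}\, s^{n-|\phi(\sigma)|},
\]
since $\sum_k\binom{n-j}{k}(-(t-s))^k t^{n-j-k}=(t-(t-s))^{n-j}=s^{n-j}$. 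Reassembling with the outer factor $s^{|\sigma|}$ and summing over $\sigma$ produces exactly
\[
\sum_\sigma(-1)^{|\phi(\sigma)|}(t-s)^{|\phi(\sigma)|-|\sigma|}\, s^{n-|\phi(\sigma)|+|\sigma|} \;=\; p_\Delta(t,s),
\]
which gives the desired symmetry.

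The main obstacle is establishing (or cleanly invoking) the bivariate Macaulay--Stanley reciprocity $h_{B^\circ}(s,t)=h_B(t,s)$ for simplicial balls, together with the careful bookkeeping of the empty/zero face and the degenerate case $\tau=\{0\}$ (where the reciprocity holds trivially since $L_{\{0\}}=L_{\{0\}}^\circ=\{\{0\}\}$). Once the reciprocity is in hand, the remainder of the argument is purely formal manipulation via the binomial theorem.
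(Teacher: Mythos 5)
Your proof is correct and follows the route the paper indicates. The paper gives no proof of this lemma---it cites Bittner \cite[Lemma 4.6]{bittnertoric} and notes only that the statement ``is a consequence of the Dehn-Somerville equations''; your argument via Macaulay--Stanley reciprocity $h_{B^\circ}(s,t) = h_B(t,s)$ for triangulated balls is precisely a form of Dehn--Somerville, so it matches the approach the paper attributes to Bittner.
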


If $Y$ is a toric variety defined by a fan $\Sigma$ of dimension $n$ and if $\sigma \in \Sigma$, denote by $\mathscr{O}_\sigma$ the open orbit corresponding to $\sigma$ and by $V_\sigma$ its closure. From the identity $V_\sigma = \coprod_{\sigma \subset \sigma'} \mathscr{O}_{\sigma'}$, we deduce, by inclusion-exclusion,
\begin{align*}
    [\mathcal{O}_\sigma] = \sum_{\sigma \subset \sigma'} (-\tau)^{|\sigma'|-|\sigma|} [V_{\sigma'}].
\end{align*}

\begin{proof}[Proof of Proposition \ref{toric}]
Take a toric resolution $Y \rightarrow X$ of $X$. Denote by $\Sigma$ and $\Delta$ the fans defining $Y$ and $X$ respectively. For each cone $\sigma \in \Sigma$, denote by $\phi(\sigma) \in \Delta$ the smallest cone in $\Delta$ containing $\sigma$. Each $\sigma \in \Sigma$ determines a $\mathbb{G}_m^{|\phi(\sigma)|-|\sigma|}$-bundle $\mathcal{O}_\sigma \rightarrow \mathcal{O}_{\phi(\sigma)}$. We thus compute
\begin{align*}
    [Y] =& \sum_{\sigma \in \Sigma} [\mathcal{O}_\sigma] \tau^{|\sigma|} \\
        =& \sum_{\tau \in \Delta} \sum_{\phi(\sigma) = \tau} (\mathbb{L}-\tau)^{|\tau|-|\sigma|} [\mathcal{O}_{\phi(\sigma)}] \tau^{|\sigma|} \\
        =& \sum_{\tau' \in \Delta} \bigg( \sum_{\tau \subset \tau'} \sum_{\phi(\sigma) = \tau } (-1)^{|\tau|} (\mathbb{L}-\tau)^{|\tau|-|\sigma|} \tau^{|\tau'|-|\tau|+|\sigma|} \bigg) (-1)^{|\tau'|} [V_{\tau '}] \\
        =& \sum_{\tau' \in \Delta, |\tau'| \geq 1} \bigg( \sum_{\tau \subset \tau'} \sum_{\phi(\sigma) = \tau} (-1)^{|\tau|} (\mathbb{L}-\tau)^{|\tau|-|\sigma|}  \tau^{|\tau'|-|\tau|+|\sigma|} \bigg) (-1)^{|\tau'|} [V_{\tau'}] + [X] 
\end{align*}
This last line is equal to 
\begin{align*}
    \sum_{\tau' \in \Delta, |\tau'| \geq 1} (-1)^{|\tau'|} p_{\tau'}(\mathbb{L}, \tau) [V_{\tau'}] + [X].
\end{align*}
by Lemma \ref{sym toric lem}.

But $V_{\tau'}$ is a simplicial toric variety of dimension less than $n$, so we can assume by induction on $n$ that it has $\mathbb{D}$-singularities. Combined with Lemma \ref{sym toric lem}, which says that $p_{\tau'}(\mathbb{L}, \tau)$ is symmetric in $\tau$ and $\mathbb{L}$, we see that
\begin{align*}
    [X] = [Y] - \sum_{\tau' \in \Delta, |\tau'| \geq 1} (-1)^{|\tau'|} p_{\tau'}(\mathbb{L}, \tau) [V_{\tau'}]
\end{align*}
is a sum of terms, all of which lie in $\KsprX$. Hence $X$ has $\mathbb{D}$-singularities.
\end{proof}

\subsection{Abelian Quotient Singularities}

\begin{thm}\label{abelian sing}[cf. Theorem \ref{intro abelian quot}]
Let $G$ be a finite abelian group acting on a smooth quasi-projective variety $X$ over a field $k$ of characteristic zero. Then the quotient variety $X/G$ has $\mathbb{D}$-singularities. 
\end{thm}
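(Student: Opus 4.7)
The plan is to reduce the theorem to the toric case already handled by Proposition \ref{toric}, via Luna's étale slice theorem. By Proposition \ref{local} together with the spreading-out principle of Theorem \ref{Spreading Out}, having $\mathbb{D}$-singularities is detected on sufficiently small étale neighborhoods of each closed point of $X/G$.

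First, I would fix a closed point $\bar{x} \in X/G$ and lift it to a closed point $x \in X$ with stabilizer $H = G_x \subseteq G$. As a subgroup of a finite abelian group, $H$ is itself finite abelian. Luna's étale slice theorem, valid in characteristic zero for actions of finite (hence reductive) groups on smooth quasi-projective varieties, then produces an étale neighborhood of $\bar{x} \in X/G$ isomorphic to an étale neighborhood of the origin in $V/H$, where $V$ is the $H$-representation $T_xX$ (or an $H$-stable normal slice, linearized in char.\ $0$). Because $H$ is finite abelian and $\mathrm{char}(k)=0$, the representation $V$ decomposes as a direct sum of one-dimensional characters, so the action of $H$ on $V$ is diagonal. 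Consequently, $V/H$ is an affine simplicial toric variety, and Proposition \ref{toric} gives that $V/H$ has $\mathbb{D}$-singularities.

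The remaining step is to descend this étale-local vanishing of $\pi_2(\mathbbm{1})$ to the global statement for $X/G$. Lemma \ref{smooth} ensures that $\pi_2$ commutes with smooth pullback, so $\pi_2(\mathbbm{1}_{X/G})$ restricts to zero on each Luna slice. To convert this into an honest vanishing in $K_0(\Var_{X/G}^{\spr})$, I would stratify $X/G$ by orbit type: each stratum with stabilizer $H$ is étale-locally modeled by a toric stratum inside some $V/H$, and one expresses $\mathbbm{1}_{X/G}$ as an alternating sum over this stratification that matches the toric stratification on each slice. The Dehn-Sommerville cancellation exploited in Lemma \ref{sym toric lem} then globalizes. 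This is the adaptation of the arguments of Bittner \cite{bittnertoric} and Esser-Scavia \cite{esserquotient} that the statement advertises.

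The main obstacle will be precisely this descent: the slice theorem only gives local equivalence on henselizations, while $\pi_2(\mathbbm{1}_{X/G})$ lives in an ordinary Grothendieck group rather than a sheaf, so naïve étale descent is unavailable. The delicate point is arranging the stratification of $X/G$ by orbit type so that the local toric cancellations from Proposition \ref{toric}, applied slice by slice, assemble into a single global cancellation witnessing $\pi_2(\mathbbm{1}_{X/G}) = 0$ in $K_0(\Var_{X/G}^{\spr})$.
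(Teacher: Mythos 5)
Your reduction to the toric case is the right geometric instinct, and you correctly flag the descent step as the delicate point, but the gap you identify is genuinely fatal to the argument as proposed, for a directionality reason worth spelling out. Lemma \ref{smooth} shows that for a smooth morphism $\phi: V \to T$, one has $\phi^*\pi_2 = \pi_2\phi^*$, so if $T$ has $\mathbb{D}$-singularities then so does $V$; in other words, $\mathbb{D}$-singularities propagate along étale morphisms \emph{from} the variety under scrutiny \emph{to} a known example. But Luna's slice theorem produces étale morphisms $S/H \to X/G$, i.e.\ from the local model \emph{into} $X/G$. Applying $\phi^*$ to $\pi_2(\mathbbm{1}_{X/G})$ only tells you that this class dies after pullback to the slice; since $\phi^*$ is not injective on Grothendieck groups, that is not vanishing. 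Your appeal to Proposition \ref{local} does not help either: that proposition is strictly Zariski-local, and the spreading-out result (Theorem \ref{Spreading Out}) controls passage to ordinary local rings $\mathscr{O}_{x,X/G}$, not henselizations.

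The paper gets around exactly this by taking a different geometric input. Rather than Luna's theorem, it invokes the $G$-equivariant toroidalization theorem of Abramovich--Wang (Theorem \ref{toroidally}) together with \cite[Proposition 2.13]{esserquotient}, which produce a $G$-equivariant modification $X_1 \to X$ such that every point of $X_1/G$ admits a Zariski-open neighborhood with an étale morphism \emph{to} a simplicial toric variety — the correct direction for Lemma \ref{smooth} and Proposition \ref{local} to apply. The price is that one must then compare $X/G$ with $X_1/G$: this is done by running the $G$-equivariant weak factorization theorem, using Corollary \ref{G-equ weak} to control each $G$-equivariant smooth blow-up, and inducting on dimension so that the correction terms $\tau\mathbb{L}\cdot[Y/G]\cdot[\mathbb{P}^{k-1}]$ already lie in $K_0(\Var_{X/G}^{\spr})$. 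Your proposal is missing both halves of this: the toroidalization step that gets the étale charts pointing the right way, and the weak-factorization induction that transports the conclusion back to $X/G$. The orbit-type stratification you sketch does not repair this, because the strata of $X/G$ are themselves quotients — not toric varieties — and are only related to toric models by étale morphisms in the problematic direction.
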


For the proof, we will require some setup.

\begin{lem}\label{vector space}
Let $V$ be a vector field of dimension $n$ over a field $k$ of characteristic zero and let $G$ be a finite abelian group acting linearly on $V$. Then
\begin{align*}
    [\mathbb{P}(V)/G] = [\mathbb{P}^{n-1}] \in K_0(\Var_k^{\dim}).
\end{align*}
\end{lem}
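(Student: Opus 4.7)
My plan is to proceed by induction on $n = \dim V$; the base case $n = 1$ is trivial. For the inductive step, the key construction is an affine-bundle stratification. By Maschke's theorem (valid as $\mathrm{char}\,k = 0$), I decompose $V = V^G \oplus W$ as a sum of $G$-representations and use the $G$-equivariant stratification $\mathbb{P}(V) = \mathbb{P}(V^G) \sqcup U$ with $U := \mathbb{P}(V) \setminus \mathbb{P}(V^G)$. The projection $U \to \mathbb{P}(W)$, $[v+w] \mapsto [w]$, realizes $U$ as a Zariski-locally trivial affine bundle of rank $\dim V^G$; because $G$ acts trivially on the fibers (which are affine translates of $V^G$), this descends to an affine bundle $U/G \to \mathbb{P}(W)/G$ of the same rank. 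The resulting identity
\[ [\mathbb{P}(V)/G] = [\mathbb{P}(V^G)] + \mathbb{L}^{\dim V^G}\,[\mathbb{P}(W)/G] \]
is the backbone of the argument.

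When $V^G \neq 0$ the induction closes directly: since $\dim W < n$, the inductive hypothesis yields $[\mathbb{P}(W)/G] = [\mathbb{P}(W)]$; the same stratification applied without passing to the quotient gives $[\mathbb{P}(V)] = [\mathbb{P}(V^G)] + \mathbb{L}^{\dim V^G}[\mathbb{P}(W)]$, and hence $[\mathbb{P}(V)/G] = [\mathbb{P}(V)] = [\mathbb{P}^{n-1}]$.

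The main obstacle is the case $V^G = 0$, where $W = V$ and the induction does not close. My plan here is to appeal to the toric structure. After base change to the splitting field $k' = k(\zeta_{|G|})$, the $G$-action on $V$ becomes diagonal in an eigenbasis, so the quotient $\mathbb{P}(V)/G$ acquires the structure of a simplicial toric variety whose defining fan coincides with the standard simplex fan of $\mathbb{P}^{n-1}$. Proposition \ref{toric} then guarantees it has $\mathbb{D}$-singularities, and a direct cell-decomposition computation in the spirit of Lemma \ref{sym toric lem}—the class of a split toric variety depends only on the combinatorial fan, not on the ambient lattice—identifies its class with $[\mathbb{P}^{n-1}_{k'}]$ in $K_0(\Var_{k'}^{\dim})$. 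The most delicate point, and where I expect the bulk of the work, will be the Galois descent from $k'$ back to $k$: the tori appearing in the toric stratification of $\mathbb{P}(V)/G$ need not be split over $k$, so their classes in $K_0(\Var_k^{\dim})$ may a priori differ from those of their split forms. I would handle this by grouping the stratification into Galois orbits under $\mathrm{Gal}(k'/k)$ and matching them term-by-term with the analogous decomposition of $\mathbb{P}^{n-1}$ arising from a $G$-equivariant flag defined over $k$, so that the contributions of the non-split tori cancel against the corresponding contributions on the right-hand side.
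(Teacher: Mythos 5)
Your route diverges sharply from the paper's, which requires no induction: after diagonalizing the $G$-action, $\mathbb{P}(V)$ is stratified by the $G$-stable coordinate tori $\mathbb{G}_m^{\ell}$ (the loci where a fixed subset of homogeneous coordinates is nonvanishing), $G$ acts on each through a finite subgroup of the torus, so each quotient $\mathbb{G}_m^{\ell}/G$ is again a split $\mathbb{G}_m^{\ell}$, and summing strata gives the result directly. Your inductive reduction has a genuine error in the $V^G \neq 0$ step: $G$ does \emph{not} act trivially on the fibers of $U \to \mathbb{P}(W)$. If $h \in G$ stabilizes $[w] \in \mathbb{P}(W)$, so that $hw = \chi(h)\,w$, then on the fiber over $[w]$, identified with $V^G$ via $v \mapsto [v+w]$, the element $h$ acts by $v \mapsto \chi(h)^{-1}v$, which is nontrivial whenever $\chi$ is. Hence $U/G \to \mathbb{P}(W)/G$ is not an affine bundle. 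Concretely, take $G = \mathbb{Z}/2$ acting on $V = k^3$ with weights $(0,0,1)$: then $V^G = k^2$, $\mathbb{P}(W)$ is a point, $U \cong \mathbb{A}^2$, and $G$ acts on $U$ by $(x,y) \mapsto (-x,-y)$, so $U/G$ is the quadric cone, not $\mathbb{A}^2$. (One can still show $[U/G] = \mathbb{L}^2$, but doing so in general amounts to the paper's torus stratification, at which point the induction is not buying you anything.)

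The $V^G = 0$ branch, which you correctly identify as carrying the bulk of the difficulty, is only sketched and the sketch does not close. Proving $[\mathbb{P}(V)_{k'}/G] = [\mathbb{P}^{n-1}_{k'}]$ over the cyclotomic extension $k'$ is again exactly the paper's coordinate-torus argument, so the real content is the Galois descent, and that step is genuinely delicate: the orbit strata of $\mathbb{P}(V)/G$ over $k$ need not be split tori (already for $\mathbb{Z}/3$ acting irreducibly on $\mathbb{Q}^2$, the open stratum of $\mathbb{P}^1/G$ is $\mathbb{P}^1$ minus a quadratic point, a nonsplit torus). Your plan to match Galois orbits against ``the analogous decomposition of $\mathbb{P}^{n-1}$ arising from a $G$-equivariant flag defined over $k$'' is not well-posed, because $\mathbb{P}^{n-1}$ on the right-hand side carries no $G$-action and hence no $G$-equivariant flag; there is no canonical decomposition of $[\mathbb{P}^{n-1}]$ into pieces that would match the twisted strata on the left.
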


\begin{proof}
Diagonalize the action of $G$ on $V$. This provides a decomposition of $V- \{ 0 \}$ as a disjoint union of $G$-equivariant standard coordinate tori $\mathbb{G}_{m}^{\ell+1}$. Taking the quotient by the scalar action of $\mathbb{G}_{m}$, we obtain a $G$-equivariant decomposition of $\mathbb{P}V$ into tori $\mathbb{G}_{m}^\ell$, with $G$ acting diagonally. Each of the quotients of the tori $\mathbb{G}_{m}^\ell$ by the $G$-action remain isomorphic to $\mathbb{G}_{m}^\ell$. Therefore $[\mathbb{P}(V)/G] = [\mathbb{P}V]$.
\end{proof}

\begin{lem}\label{pointwise}
Let $Z$ be a variety over a field $k$ of characteristic zero and let $\alpha \in K_0(\Var_Z^{\dim})$. Then $\alpha = 0$ if $i_p^* \alpha = 0$ in $K_0(\Var_{k(p)}^{\dim})$ for every (possibly non-closed) point $i_p: \mathop{\mathrm{Spec}}k(p) \rightarrow Z$.
\end{lem}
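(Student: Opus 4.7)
The plan is to run a noetherian induction on a minimal closed subscheme supporting $\alpha$. Let $\mathcal{F}$ be the collection of closed subschemes $Y \subset Z$ such that $\alpha$ lies in the image of $(i_Y)_! : K_0(\Var_Y^{\dim}) \to K_0(\Var_Z^{\dim})$. This is nonempty (it contains $Y = Z$), and by the descending chain condition on closed subschemes of the noetherian scheme $Z$ it admits a minimal element $Y$; fix $\beta \in K_0(\Var_Y^{\dim})$ with $\alpha = (i_Y)_! \beta$. It suffices to derive $Y = \emptyset$.

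Assume for contradiction that $Y \neq \emptyset$, and pick an irreducible component $Y_j$ of $Y^{\red}$ with generic point $\eta_j$. Set $\beta_j := (Y_j \hookrightarrow Y)^* \beta \in K_0(\Var_{Y_j}^{\dim})$. By functoriality of pullback along the composite $\mathop{\mathrm{Spec}} k(\eta_j) \to Y_j \hookrightarrow Y \hookrightarrow Z$, the hypothesis $i_{\eta_j,Z}^*\alpha = 0$ translates into the vanishing of $\beta_j$ after pulling back to $\mathop{\mathrm{Spec}} k(\eta_j)$. Since $Y_j$ is reduced and irreducible, its local ring at the generic point equals $k(\eta_j)$, so Theorem \ref{Spreading Out} produces an open affine $V_j \subset Y_j$ containing $\eta_j$ on which $\beta_j|_{V_j} = 0$.

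To upgrade this to vanishing on an open subscheme of $Y$, I would intersect $V_j$ with $U_j := Y \setminus \bigcup_{k \neq j} Y_k$, which is open in $Y$. The intersection $V := V_j \cap U_j$ is open in $Y$, contains $\eta_j$, and factoring $V \hookrightarrow V_j \hookrightarrow Y_j \hookrightarrow Y$ shows $j_V^* \beta = \beta_j|_V = 0$. Applying the split short exact sequence from the start of Section \ref{gluing map} to the decomposition $Y = V \sqcup (Y \setminus V)$ yields $\beta = (i_{Y \setminus V, Y})_!(i_{Y \setminus V, Y}^* \beta)$, and pushing forward to $Z$ gives $\alpha = (i_{Y \setminus V, Z})_!(i_{Y \setminus V, Y}^* \beta)$. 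Since $V$ is nonempty, $Y \setminus V$ is strictly smaller than $Y$, contradicting the minimality of $Y$. Hence $Y = \emptyset$ and $\alpha = 0$.

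The main technical obstacle I anticipate is the functorial bookkeeping around the pullback $i_p^*$ at a non-closed point, which the paper defines only as a composite of a flat pullback with closed-embedding pullbacks. I would need to check once explicitly that this composite is independent of the chosen factorization and that the transitivity identity $i_{\eta_j, Z}^* = (\mathop{\mathrm{Spec}} k(\eta_j) \to Y_j)^* \circ (Y_j \hookrightarrow Z)^*$ holds in the graded Grothendieck group; everything else in the argument is a combination of spreading out, standard functoriality, and the split short exact sequence already established in the paper.
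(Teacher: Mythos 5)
Your proposal is correct and takes essentially the same approach as the paper's proof: spread out the vanishing at a generic point via Theorem~\ref{Spreading Out} to obtain an open subset on which the class restricts to zero, then pass to the strictly smaller closed complement. The only difference is bookkeeping—you run a noetherian induction on a minimal closed support, while the paper inducts on dimension after a preliminary reduction to $Z$ irreducible, and your handling of multiple irreducible components via the intersection $V = V_j \cap U_j$ is if anything a bit more careful than the paper's terse ``considering the components separately.''
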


\begin{proof}
We induct on the dimension of $Z$. First reduce to the case where $Z$ is irreducible by considering the components separately. Now let $\eta \in Z$ be the generic point. Then $i_\eta^* \alpha = 0$ implies by spreading out, Theorem \ref{Spreading Out}, that $j_U^* \alpha = 0$ for some open subset $U$ of $Z$. If $W \subset Z$ is the complement of $U$, then $\alpha = (i_W)_! \beta$ for some $\beta \in K_0(\Var_W^{\dim})$, which vanishes when pulled back to any point. By the inductive hypothesis, $\beta = 0$, and hence we conclude that $\alpha = 0$.
\end{proof}

\begin{prop}\label{514}
Let $G$ be a finite abelian group acting on a quasi-projective variety $X$ over a field $k$ of characteristic zero, and let $E$ be a $G$-equivariant vector bundle of rank $n$ over $X$. Then
\begin{align*}
    [\mathbb{P}(E)/G ] = [\mathbb{P}^{n-1}] \cdot [X/G] \in K_0(\Var_{X/G}^{\dim}).
\end{align*}
\end{prop}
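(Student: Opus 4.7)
The plan is to combine Lemma \ref{pointwise} with a local Galois-descent argument that reduces the problem to Lemma \ref{vector space}. First, I apply Lemma \ref{pointwise}: it suffices to verify that for every point $i_p: \mathop{\mathrm{Spec}} k(p) \to X/G$ we have $i_p^*[\mathbb{P}(E)/G] = [\mathbb{P}^{n-1}_{k(p)}]$ in $K_0(\Var_{k(p)}^{\dim})$. Since $|G|$ is invertible in characteristic zero, the averaging idempotent $\frac{1}{|G|}\sum_{g \in G} g$ ensures that formation of $G$-quotients commutes with arbitrary base change, so the pullback above equals $[\mathbb{P}(E|_F)/G]$, where $F = X \times_{X/G} \mathop{\mathrm{Spec}} k(p)$ is the scheme-theoretic fiber of $\pi: X \to X/G$ at $p$.

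Next, I analyze $F$. Replacing it by its reduction (harmless in $K_0$) and using that $F/G = \mathop{\mathrm{Spec}} k(p)$ is connected, $G$ permutes the components of $F_{\mathrm{red}}$ transitively. Pick one component $\mathop{\mathrm{Spec}} L$ with stabilizer $H \leq G$; then $L/k(p)$ is a finite Galois extension with group $H/K$, where $K \leq H$ is the kernel of the action of $H$ on $L$. The restriction $V := E|_{\mathop{\mathrm{Spec}} L}$ is an $n$-dimensional $L$-vector space carrying an $H$-action covering the Galois action of $H/K$ on $L$, and $\mathbb{P}(E|_{F_{\mathrm{red}}})/G \cong \mathbb{P}(V)/H$ as $k(p)$-schemes. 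The task is thereby reduced to proving $[\mathbb{P}(V)/H] = [\mathbb{P}^{n-1}_{k(p)}]$ in $K_0(\Var_{k(p)}^{\dim})$.

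Finally, I would use that $H$ is abelian to descend $V$ to a $k(p)$-vector space $W$ of dimension $n$ carrying a $k(p)$-linear $K$-action, such that $\mathbb{P}(V)/H$ and $\mathbb{P}(W)/K$ have the same class in $K_0(\Var_{k(p)}^{\dim})$. Concretely, one decomposes $V = \bigoplus_\chi V_\chi$ into $K$-isotypic components over $L$; the abelian structure of $H$ forces $H/K$ to permute these components along Galois orbits of $\hat{K}$, so assembling descent data orbit-by-orbit produces the desired $W$. Lemma \ref{vector space} applied to $\mathbb{P}(W)/K$ then yields $[\mathbb{P}^{n-1}_{k(p)}]$, completing the proof. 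I anticipate the main obstacle to be this final descent: the extension $1 \to K \to H \to H/K \to 1$ need not split, so the $H$-action does not globally restrict to a $K$-equivariant Galois descent datum, and the commutativity of $H$ is essential to ensure that the orbit-by-orbit descents assemble into a coherent $k(p)$-form $W$.
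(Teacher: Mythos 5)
Your proof strategy matches the paper's: reduce to fibers via Lemma \ref{pointwise}, pass to the stabilizer $G_x$ of a point $x$ in the fiber over $p$, and apply Lemma \ref{vector space}. You are more careful than the paper about the key subtlety: $G_x$ acts on $E_x$ only $k(p)$-semilinearly over $k(x)$ (it acts on $k(x)$ through $\mathrm{Gal}(k(x)/k(p))$), so Lemma \ref{vector space}, which posits a $k$-linear action on a $k$-vector space, does not apply directly. The paper's instruction to ``consider $E_x$ as a vector space over $k(p)$'' and then invoke Lemma \ref{vector space} does not literally work either, since $\mathbb{P}_{k(p)}(E_x)$ has dimension $n[k(x):k(p)]-1$ over $k(p)$, not $n-1$, and is not the fiber of $\mathbb{P}(E)$ over $x$. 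So you are right that some descent is required and the paper elides it.

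However, the descent step you outline is exactly where the gap lies, and the obstacle you flag in your final paragraph is genuine and not overcome by commutativity of $H$. Even when $V$ admits a $k(p)$-form $W$, the quotient $\mathbb{P}(V)/H$ is a Galois twist of $\mathbb{P}(W)/K$ by the residual $H/K$-action, and this twist can be nontrivial. Concretely, take $k(p) = \mathbb{Q}$, $L = \mathbb{Q}(i)$, $H = \mathbb{Z}/4 = \langle\sigma\rangle$ with $\sigma$ acting on $L$ by conjugation (so $K = \langle\sigma^2\rangle$), $n = 2$, and $V = L^2$ with $\sigma(a,b) = (\bar b, -\bar a)$, so $\sigma^2 = -\mathrm{id}_V$. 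A $\mathbb{Q}$-form $W = \mathbb{Q}^2$ does exist, but $\mathbb{P}(W)/K = \mathbb{P}^1_\mathbb{Q}$ while $\mathbb{P}(V)/H$ is a nonsplit conic over $\mathbb{Q}$: a $\sigma$-fixed point $[a:b] \in \mathbb{P}^1(\mathbb{Q}(i))$ would require $\lambda\bar\lambda = -1$ for some $\lambda \in \mathbb{Q}(i)^\times$, which is impossible. Its class therefore differs from $[\mathbb{P}^1_\mathbb{Q}]$ already in $\mathbb{Z}[\SB_\mathbb{Q}]$. The mechanism is that $1 \to K \to H \to H/K \to 1$ is nonsplit with $\sigma^2$ central but acting as a nonidentity scalar on $V$, which encodes a nontrivial Brauer class. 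Note this gap is shared with the paper's own proof: taking $X = \mathop{\mathrm{Spec}}\mathbb{Q}(i)$, $G = H$ acting through $\mathrm{Gal}(\mathbb{Q}(i)/\mathbb{Q})$, and $E$ the equivariant bundle above gives $X/G = \mathop{\mathrm{Spec}}\mathbb{Q}$ but $\mathbb{P}(E)/G$ a nonsplit conic, so the statement of Proposition \ref{514} itself appears to fail in this generality. Both your argument and the paper's need an additional hypothesis or a different mechanism that rules out such Severi--Brauer twisting.
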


\begin{proof}
Denote by $\pi: X \rightarrow X/G$ the quotient map. Let $p \in X/G$ be a point and denote $W = \pi^{-1}(p)$. By Lemma \ref{pointwise}, it suffices to show that $[\mathbb{P}E_W/G] = [\mathbb{P}_{k(p)}^{n-1}]$ in $K_0(\Var_{k(p)}^{\dim})$, where $E_W$ is the restriction of $E$ to $W$.

If we pick a point $x \in W$, we have $\mathbb{P}E_W/G \cong \mathbb{P}E_x / G_x$, where $G_x \subset G$ is the stabilizer of $x \in W$ and $E_x$ is the restriction of $E$ to $x$. By Lemma \ref{vector space} applied to $E_x$, considered as a vector space over the field $k(p) = k(x)^{G_x}$, we indeed find
\begin{align*}
    [\mathbb{P}E_W/G] = [\mathbb{P}E_x/G_x] = [\mathbb{P}_{k(p)}^{n-1}] \in K_0(\Var_{k(p)}^{\dim}).
\end{align*}
\end{proof}

\begin{cor}\label{G-equ weak}
Let $G$ be a finite abelian group acting on a smooth quasi-projective variety $X$ over a field $k$ of characteristic zero, and let $f:\Bl_Y X \rightarrow X$ be a $G$-equivariant blow-up along a smooth closed subvariety $Y \subset X$ of codimension $k+1$. Then
\begin{align*}
    [\Bl_Y X /G]= [X/G] + \tau \mathbb{L} \cdot  [Y/G ] \cdot [\mathbb{P}^{k-1}] \in K_0(\Var_{X/G}^{}).
\end{align*}
\end{cor}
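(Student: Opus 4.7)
The plan is to combine the standard cut-and-paste decomposition of a blow-up with Proposition \ref{514} applied to the $G$-equivariant normal bundle.

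First, I would identify the exceptional divisor. Since $Y \subset X$ is a smooth closed subvariety of codimension $k+1$ and the blow-up $f: \Bl_Y X \to X$ is $G$-equivariant, the induced $G$-action on $N_{Y/X}$ makes it a $G$-equivariant vector bundle of rank $k+1$ over $Y$. The exceptional divisor is canonically $E = \mathbb{P}(N_{Y/X})$ with the induced $G$-action. Taking quotients by the finite group $G$ yields a decomposition $\Bl_Y X/G = (X-Y)/G \sqcup E/G$, compatible with the natural morphism to $X/G$.

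Next, I would write the cut-and-paste relation in $K_0(\Var_{X/G}^{\dim})$ in degree zero (which is the ambient degree since $X/G$ has dimension $n = \dim X$):
\begin{align*}
  [\Bl_Y X/G]_0 = [X/G]_0 - [Y/G]_0 + [E/G]_0.
\end{align*}
Using the relative dimensions $\dim_{X/G}(Y/G) = -k-1$ and $\dim_{X/G}(E/G) = -1$, the shorthand convention translates this as $[Y/G]_0 = \tau^{k+1}[Y/G]$ and $[E/G]_0 = \tau \cdot [E/G]$.

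Then I would apply Proposition \ref{514} to the $G$-equivariant bundle $N_{Y/X} \to Y$ to conclude $[E/G] = [\mathbb{P}^k] \cdot [Y/G]$ in $K_0(\Var_{Y/G}^{\dim})$, and push forward along the closed embedding $Y/G \hookrightarrow X/G$ to obtain the same identity in $K_0(\Var_{X/G}^{\dim})$. Substituting, the desired formula follows from the elementary algebraic identity
\begin{align*}
  \tau \cdot [\mathbb{P}^k] - \tau^{k+1} = \tau(\tau^k + \tau^{k-1}\mathbb{L} + \cdots + \mathbb{L}^k) - \tau^{k+1} = \tau\mathbb{L} \cdot [\mathbb{P}^{k-1}]
\end{align*}
inside $\mathbb{Z}[\tau, \mathbb{L}]$. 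There is no serious obstacle; the only thing to keep track of is the graded degree bookkeeping so that the factor of $\tau$ coming from reindexing $[E/G]_{-1}$ into degree zero is accounted for correctly, and this is precisely what converts $[\mathbb{P}^k]$ into $\mathbb{L}\cdot[\mathbb{P}^{k-1}]$ after subtracting the $[Y/G]_0$ term.
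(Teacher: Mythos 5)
Your proof is correct and follows essentially the same route as the paper: cut-and-paste across the exceptional divisor, invoke Proposition \ref{514} applied to the $G$-equivariant normal bundle to identify $[E/G]$ with $[\mathbb{P}^k]\cdot[Y/G]$, and finish with the algebraic identity $\tau[\mathbb{P}^k]-\tau^{k+1}=\tau\mathbb{L}[\mathbb{P}^{k-1}]$. The only difference is that you spell out the grading shifts (the $\tau^{k+1}$ and $\tau$ factors) explicitly rather than writing them down immediately, which makes the bookkeeping a bit more transparent but does not change the argument.
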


\begin{proof}
We compute
\begin{align*}
    [\Bl_Y X/G] - [X/G] =& \tau \cdot [\mathbb{P}(N_{Y / X})] - \tau^{k+1} \cdot [Y/G] \\
    =& [Y/G] ( \tau \cdot [\mathbb{P}^{k}] - \tau^{k+1} ) \\
    =& \tau \mathbb{L} \cdot [Y/G] \cdot [\mathbb{P}^{k-1}],
\end{align*}
using Proposition \ref{514}.
\end{proof}

Let us also cite the following result.

\begin{thm}\label{toroidally}[cf. \cite[Theorem $0.1$]{abramovichwang}, \cite[Theorem $3.3$]{esserquotient}]
Let $X$ be a projective variety over $k$, equipped with an action by a finite group $G$. Then there is a $G$-equivariant proper birational morphism $r:X_1 \rightarrow X$ and an open set $U \subset X_1$ such that $X_1$ is a nonsingular projective variety, $(X_1, U)$ has the structure of a strictly toroidal embedding, and $G$ acts strictly and toroidally on $(X_1, U)$.
\end{thm}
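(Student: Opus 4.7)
Since this statement is cited from \cite{abramovichwang} and \cite{esserquotient}, I would expect the paper to invoke it as a black box rather than reprove it. Nevertheless, the natural strategy for establishing such a result proceeds in two stages: first produce a smooth equivariant model, then further modify it to toroidalize the action.

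The plan is as follows. First, apply $G$-equivariant resolution of singularities in characteristic zero (available in the form of Villamayor, Bierstone--Milman, or Encinas--Villamayor) to obtain a $G$-equivariant proper birational morphism $X_0 \to X$ with $X_0$ smooth and projective, together with a simple normal crossings divisor $D_0 \subset X_0$ containing both the exceptional locus and the locus where the action is not free. Because the resolution algorithms can be chosen functorial for smooth morphisms, the $G$-action lifts automatically. At this stage $(X_0, X_0 - D_0)$ is a smooth toroidal embedding, but the action of $G$ may not respect the toroidal structure in the required strict sense: in local formal coordinates, the stabilizers need not act as monomials in a coordinate system compatible with $D_0$.

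The second, harder step is the toroidalization of the action. The idea is to iteratively perform $G$-equivariant blow-ups along smooth $G$-invariant centers having normal crossings with the current boundary, until the action becomes toroidal. The invariant controlling the procedure measures, at each point, how far the action of the stabilizer deviates from being diagonal in some local coordinate system compatible with the boundary; one shows that a judicious choice of center (the singular locus of this obstruction, appropriately defined) strictly decreases the invariant. Projectivity is preserved throughout, since each blow-up is $G$-equivariant and has a $G$-invariant center, and one ends with a pair $(X_1, U)$ where $U$ is the complement of the final boundary and the stabilizer of every point acts toroidally.

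The main obstacle is precisely the second step: setting up a well-founded induction for making the action toroidal. One needs a $G$-equivariant, canonical invariant that (a) is upper-semicontinuous, (b) has maximum locus a smooth $G$-invariant subvariety with normal crossings with the boundary, and (c) strictly decreases under the corresponding blow-up. Constructing such an invariant and verifying its functorial behavior under equivariant blow-ups is the technical heart of \cite{abramovichwang}, and any self-contained proof would have to reproduce this machinery. For the purposes of Theorem \ref{abelian sing}, however, it suffices to apply the result as a black box and combine it with Proposition \ref{toric} and Corollary \ref{G-equ weak} via an induction on dimension to conclude that $X/G$ has $\mathbb{D}$-singularities.
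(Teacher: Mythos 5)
You correctly identify that the paper does not prove this theorem but cites it as a black box from Abramovich--Wang and Esser--Scavia, which is exactly what happens in the text. Your accompanying sketch of the underlying proof (equivariant resolution followed by an inductive toroidalization of the action via a decreasing invariant) faithfully summarizes the strategy of \cite{abramovichwang}, and you rightly note that the technical heart is constructing the well-founded invariant controlling the toroidalization step.
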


In particular, by \cite[Proposition $2.13$]{esserquotient}, the quotient $X_1/G$ satisfies the following property: for all points $p \in X_1/G$, there exists an open neighborhood $V$ of $p$ and an étale morphism $V \rightarrow T$, where $T$ is a simplicial toric variety. 

As $T$ has $\mathbb{D}$-singularities by Proposition \ref{toric}, so too does $V$ by Proposition \ref{smooth}. But as the property of having $\mathbb{D}$-singularities can be checked locally by Proposition \ref{local}, we conclude that $X_1/G$ has $\mathbb{D}$-singularities.

\begin{proof}[Proof of Theorem $5.9$]
We argue by induction on $\dim X$. Since any quasi-projective $G$-variety $X$ admits a $G$-equivariant compactification and the property of having $\mathbb{D}$-singularities is local, we may as well assume $X$ is projective.

Take a $G$-equivariant proper birational morphism $r:X_1 \rightarrow X$ as in Theorem \ref{toroidally}. By the $G$-equivariant weak factorization theorem \cite{weakfactorization}, $r$ is a composition of blow-ups and blow-downs of smooth proper subvarieties, all of which are defined over $X$. Applying Corollary \ref{G-equ weak} repeatedly, we see that $[X/G] \in K_0(\Var_{X/G}^{\dim})$ is a sum of $[X_1/G]$ with classes of the form $\tau \mathbb{L} \cdot [Y/G] \cdot [\mathbb{P}^{k-1}]$, where $Y$ is a smooth $G$-variety with $\dim Y < \dim X$. Our induction hypothesis says that $Y/G$ has $\mathbb{D}$-singularities and hence $\tau \mathbb{L} \cdot [Y/G] \cdot [\mathbb{P}^{k-1}] \in K_0(\Var_{X/G}^{\spr})$. We are thus reduced to checking that $X_1/G$ has $\mathbb{D}$-singularities. This follows from the above discussion.
\end{proof}

\subsection{Symmetric Powers}\label{sym section}

\begin{thm}\label{sym d-sing}
Let $X$ be a smooth variety over $k$. Then the class
\begin{align*}
    \{ \Sym^m X \rightarrow \Sym^m X \} \in K_0(\Stk_{\Sym^m X}^{\dim})
\end{align*}
lies in the submodule $T^{-1}K_0(\Var_{\Sym^m X}^{\spr})$, following the notation of \ref{DM section}. Hence 
\begin{align*}
    \pi_2(\mathbbm{1}_{\Sym^m X}) \in \ker( K_0(\Var_{\Sym^m X}^{\spr}) \rightarrow T^{-1}K_0(\Var_{\Sym^m X}^{\spr})),
\end{align*}
i.e. $\Sym^m X$ has $\mathbb{D}$-singularities, up to a $T$-torsion element of $K_0(\Var_{\Sym^m X}^{\spr})$.
\end{thm}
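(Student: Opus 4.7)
The goal is to deduce Theorem~\ref{sym d-sing} from the two main inputs already established: Remark~\ref{sym = symm}, which identifies the class of $\Sym^m X$ with the class of the stack quotient $[X^m/S_m]$ over $\Sym^m X$, and Corollary~\ref{DM in spr}, which says that the class of any regular Deligne--Mumford stack proper over a base lies in $T^{-1}K_0(\Var_{S}^{\spr})$.

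First, I would verify the hypotheses needed to apply Corollary~\ref{DM in spr} to the morphism $[X^m/S_m] \to \Sym^m X$. Since $X$ is smooth, the product $X^m$ is smooth, and the quotient stack of a smooth variety by a finite group action is a smooth (hence regular) Deligne--Mumford stack in characteristic zero. The coarse moduli morphism $[X^m/S_m] \to \Sym^m X$ is proper: it is representable and quasi-finite, and its composition with the étale surjection $X^m \to [X^m/S_m]$ is the finite morphism $X^m \to \Sym^m X$, so properness follows by fpqc descent.

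With the setup in hand, Remark~\ref{sym = symm} gives
\begin{align*}
    \{\Sym^m X \to \Sym^m X\} \; = \; \{[X^m/S_m] \to \Sym^m X\}
\end{align*}
in $K_0(\Stk_{\Sym^m X}^{\dim})$, and Corollary~\ref{DM in spr} then places this common class in the submodule $T^{-1}K_0(\Var_{\Sym^m X}^{\spr})$. This establishes the first assertion.

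For the second assertion, I would use the quadratic-extension structure of Theorem~\ref{str iso} applied after localization at $T$: we have a direct-sum decomposition
\begin{align*}
    T^{-1}K_0(\Var_{\Sym^m X}^{\dim}) \; \cong \; T^{-1}K_0(\Var_{\Sym^m X}^{\spr}) \; \oplus \; \tau \cdot T^{-1}K_0(\Var_{\Sym^m X}^{\spr}),
\end{align*}
so the projection $\pi_2$ commutes with the localization $K_0(\Var_{\Sym^m X}^{\spr}) \to T^{-1}K_0(\Var_{\Sym^m X}^{\spr})$. By the first part, $\mathbbm{1}_{\Sym^m X}$ lies in the first summand after localization, so $\pi_2(\mathbbm{1}_{\Sym^m X})$ maps to zero in $T^{-1}K_0(\Var_{\Sym^m X}^{\spr})$, i.e.\ it is annihilated by some element of $T$, which is precisely the stated kernel condition.

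The only real subtlety, and the step I would check most carefully, is that both Remark~\ref{sym = symm} and Corollary~\ref{DM in spr} are valid in the relative setting over an arbitrary base $\Sym^m X$ (not merely over $\mathop{\mathrm{Spec}}k$); once this is granted, the rest of the argument is formal and uses no further geometric input about symmetric powers beyond the stratification by orbit type already appearing in the proof of Proposition~\ref{coincide}.
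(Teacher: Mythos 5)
Your proof is correct and takes essentially the same route as the paper: identify $\{\Sym^m X\}$ with $\{[X^m/S_m]\}$ via Remark~\ref{sym = symm}, then apply Corollary~\ref{DM in spr} to the smooth, proper Deligne--Mumford stack $[X^m/S_m] \to \Sym^m X$. The paper's proof is terser (it takes for granted the properness and smoothness of the quotient stack, and the deduction of the second assertion), but the substance is identical.
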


\begin{proof}
By Remark \ref{sym = symm},
\begin{align*}
    \{ \Sym^m X \} = \{ \Symm^m X \} \in K_0(\Var_{\Sym^m X}^{\dim}).
\end{align*}
As $\Symm^m X = [X^m/S_m]$ is a smooth Deligne-Mumford stack, we conclude $\{ \Symm^m X \} \in K_0(\Var_{\Sym^m X}^{\spr})$ by Corollary \ref{DM in spr}.
\end{proof}

The following conjecture remains open.

\begin{conj}\label{is sym D-sing?}
Let $X$ be a smooth variety and let $m \geq 1$. Then the symmetric power $\Sym^m X$ has $\mathbb{D}$-singularities.
\end{conj}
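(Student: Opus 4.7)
The plan is to prove the conjecture by reducing to a local diagonal model and then imitating the strategy of Theorem \ref{abelian sing} in the non-abelian setting.

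The first step is a local reduction. By Proposition \ref{local}, it suffices to check that $\Sym^m X$ has $\mathbb{D}$-singularities at each closed point. A closed point corresponds to a formal sum $a_1 p_1 + \cdots + a_k p_k$ with $p_i \in X$ distinct and $\sum a_i = m$. Choosing disjoint affine opens $U_i \ni p_i$, a Zariski neighborhood of this point in $\Sym^m X$ is isomorphic to $\prod_i \Sym^{a_i} U_i$. Since the exterior product preserves smooth and proper classes, the property of being $\mathbb{D}$-singular is preserved under products, so by induction on $m$ we may assume $k=1$, i.e. we must show that $\Sym^m X$ has $\mathbb{D}$-singularities at a diagonal point $[p, \ldots, p]$. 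By spreading out (Theorem \ref{Spreading Out}) and choosing étale coordinates, the problem further reduces to showing that $\Sym^m \mathbb{A}^d$ has $\mathbb{D}$-singularities over a neighborhood of the origin, where $d = \dim X$.

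The second step would replicate the argument of Theorem \ref{abelian sing}. Applying Theorem \ref{toroidally} to an $S_m$-equivariant projective compactification of $(\mathbb{A}^d)^m$ yields an $S_m$-equivariant proper birational morphism $\pi:\tilde Y \to (\mathbb{A}^d)^m$ with $\tilde Y$ smooth and $S_m$ acting strictly toroidally. Stabilizers for toroidal actions are abelian, and the quotient admits étale-local models by simplicial toric varieties, so $\tilde Y/S_m$ has $\mathbb{D}$-singularities by Proposition \ref{toric} together with Propositions \ref{smooth} and \ref{local}. The $S_m$-equivariant weak factorization theorem decomposes $\pi$ into $S_m$-equivariant smooth blow-ups and blow-downs. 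To deduce that $(\mathbb{A}^d)^m/S_m$ itself has $\mathbb{D}$-singularities, one would need an $S_m$-equivariant analogue of Corollary \ref{G-equ weak}: for each $S_m$-equivariant smooth blow-up $\Bl_W V \to V$ along an $S_m$-invariant smooth center $W$ of codimension $k+1$, the difference $[\Bl_W V/S_m] - [V/S_m]$ should lie in $K_0(\Var_{V/S_m}^{\spr})$.

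The main obstacle is this non-abelian projective bundle formula. In the abelian case, Proposition \ref{514} diagonalized the action of $G$ on the normal fiber, so $\mathbb{P}(V)/G$ decomposed into disjoint tori. For non-abelian subgroups of $S_m$, the stabilizer of a component of $W$ is a Young subgroup $H = S_{a_1} \times \cdots \times S_{a_r}$ with each $a_i < m$, and the quotient $\mathbb{P}(V)/H$ involves symmetric powers of smaller representations. The natural plan is a double induction on $m$ and $\dim X$: the inductive hypothesis on $m$ would furnish $\mathbb{D}$-singularity of the smaller symmetric powers $\Sym^{a_i}(-)$ appearing in the fibers, while Chevalley--Shephard--Todd trivializes the action of each $S_{a_i}$ on its coordinate permutation block. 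Tracking the combinatorics of Young subgroups together with the decomposition of equivariant normal bundles into isotypic components is the principal technical difficulty, and it is this bookkeeping that I expect to block a clean proof.

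As a sanity check, the low-dimensional cases are accessible by different means: for $d=1$, $\Sym^m X$ is already smooth, and for $d=2$ the Hilbert--Chow morphism $\Hilb^m X \to \Sym^m X$ is a smooth resolution for which Göttsche's formula gives an explicit class computation. The absence of such a canonical smooth resolution in dimension $\geq 3$ is precisely what keeps the conjecture open.
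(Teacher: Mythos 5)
This is a conjecture that the paper explicitly leaves open, so there is no proof to compare against, and you have correctly treated it as such: your proposal is an outline of the natural strategy (transposing the argument of Theorem \ref{abelian sing} to the $S_m$-action on $X^m$) together with an accurate diagnosis of where it stalls. The key point you identify is right: Lemma \ref{vector space} and Proposition \ref{514} rely on diagonalizing an abelian group action on the fibers of the normal bundle, and this has no clean analogue for the non-abelian Young subgroups of $S_m$ that stabilize the components of blow-up centers in $X^m$, so Corollary \ref{G-equ weak} does not carry over. Your remarks that the conjecture holds for curves (where $\Sym^m X$ is smooth) and for surfaces (via G\"ottsche and the Hilbert--Chow resolution) also agree with the paper.

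What the paper does prove is the weaker Theorem \ref{sym d-sing}: by Remark \ref{sym = symm} one has $\{\Sym^m X\} = \{\Symm^m X\} = \{[X^m/S_m]\}$ in $K_0(\Stk_{\Sym^m X}^{\dim})$, and since $[X^m/S_m]$ is a smooth, proper Deligne--Mumford stack over $\Sym^m X$, Corollary \ref{DM in spr} places this class in $T^{-1}K_0(\Var_{\Sym^m X}^{\spr})$. This gives $\mathbb{D}$-singularity only up to $T$-torsion, but it entirely avoids the equivariant blow-up combinatorics you flag as the obstruction; your approach, if completed, would yield the strictly stronger statement of the conjecture. Two smaller remarks: your local reduction to $\Sym^m \mathbb{A}^d$ via étale coordinates uses implicitly that $\Sym^m$ of an étale map is again étale near a diagonal point, which is true (check it on completed local rings, where taking $S_m$-invariants commutes with completion in characteristic zero) but should be stated as a lemma since the paper only records Zariski-locality in Proposition \ref{local}; and the reduction is in any case dispensable, since one can apply Theorem \ref{toroidally} directly to an $S_m$-equivariant projective compactification of $X^m$ without first linearizing.
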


Symmetric powers of curves are smooth. If $X$ is a surface, we obtain a positive answer by the formula of \cite{gottschehilbert} for the resolution $\Hilb^m X \rightarrow \Sym^m X$.

In arbitrary dimension, \cite{shein} shows the strictly weaker result that symmetric powers have $\mathbb{L}$-rational singularities.

\section{Kapranov Zeta Functions}\label{kapranov zeta section}

\subsection{Irrationality Result}

\begin{thm}\label{irrational}[cf. Theorem \ref{intro Kapranov}]
Let $X$ be a smooth projective variety of dimension $>1$ and non-negative Kodaira dimension $\kappa(X) \geq 0$. Then the graded Kapranov zeta function $Z_X^{\dim}(t) \in K_0(\Var_k^{\dim})[[t]]$ of $X$ is (pointwise) irrational. 

In fact, the ungraded Kapranov zeta function $Z_X(t) \in K_0(\Var_k)[[t]]$ of $X$ is also (pointwise) irrational, if we assume a positive answer to Question \ref{is sym D-sing?}. 
\end{thm}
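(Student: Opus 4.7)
The plan is to reduce both statements, via the motivic measure $\pi\colon \Kdim \twoheadrightarrow \Kdim/(\tau) \cong \mathbb{Z}[\Bir_k]$ of Subsection \ref{motivic measures}, to showing that the birational classes $[\Sym^m X]_{\bir}$ for $m \geq 0$ do not satisfy a nontrivial linear recurrence over $\mathbb{Z}[\Bir_k]$ when $\kappa(X) \geq 0$ and $\dim X \geq 2$. Under $\pi$, the coefficient $[\Sym^m X]_{mn}$ maps to $[\Sym^m X]_{\bir}$ since $mn = \dim \Sym^m X$.

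\textbf{Graded case.} Suppose $Q(t) Z_X^{\dim}(t) = P(t)$ with $P, Q \in \Kdim[t]$. Applying $\pi$ coefficient-wise gives $\pi(Q)(t) \cdot \sum_{m \geq 0} [\Sym^m X]_{\bir}\, t^m = \pi(P)(t)$ in $\mathbb{Z}[\Bir_k][[t]]$, yielding a recurrence $\sum_{i=0}^k \bar q_i [\Sym^{m-i} X]_{\bir} = 0$ for $m$ larger than $\deg \pi(P)$. Expanding $\bar q_i = \sum_j n_{ij} [Y_{ij}]_{\bir}$ and using multiplicativity of birational type, this becomes a $\mathbb{Z}$-linear dependence among the birational classes $[Y_{ij} \times \Sym^{m-i} X]_{\bir}$. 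Since $\mathbb{Z}[\Bir_k]$ is free as a $\mathbb{Z}$-module on $\Bir_k$, such a dependence forces pairwise cancellations. To obstruct them for $m \gg 0$, I would use birational invariants: when $\kappa(X) \geq 1$, the inequality $\kappa(\Sym^m X) \geq m\,\kappa(X)$, obtained from $H^0(\Sym^m X, \omega^N) = \Sym^m H^0(X, \omega^N)$ on a smooth model, separates contributions from different $i$; when $\kappa(X) = 0$ the plurigenera may stabilize, so I would instead deploy Hodge-theoretic invariants of a smooth model of $\Sym^m X$, computed via Macdonald/G\"ottsche-type generating functions, ensuring that at least one Hodge number detects $m$ nontrivially.

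\textbf{Ungraded case, assuming Conjecture \ref{Sym commute D?}.} The conjecture applied to $[X]_n$, which is $\mathbb{D}$-invariant because $X$ is smooth projective, gives $\mathbb{D}[\Sym^m X]_{mn} = [\Sym^m X]_{mn}$, placing all coefficients of $Z_X^{\dim}(t)$ in $\Kspr$. Given a putative relation $Q(t) Z_X(t) = P(t)$, Bittner's theorem allows us to lift $Q, P$ to $\tilde Q, \tilde P \in \Kspr[t]$. The discrepancy $R := \tilde Q \cdot Z_X^{\dim} - \tilde P \in \Kspr[[t]]$ maps to $0$ in $K_0(\Var_k)[[t]]$, so $R \in \Kspr \cap (\tau - 1)\Kdim[[t]]$. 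A direct calculation in the decomposition $\Kdim = \Kspr \oplus \tau\Kspr$ using the quadratic relation $\tau^2 = [\mathbb{P}^1] \tau - \tau\mathbb{L}$ of Theorem \ref{intro str iso} identifies this intersection: the element $(\tau - 1)(\mathbb{L} - 1) = \tau\mathbb{L} - [\mathbb{P}^1] + 1$ is $\mathbb{D}$-invariant, lies in $\Kspr$, and generates $\Kspr \cap (\tau - 1)\Kdim$ as a $\Kspr$-submodule. Writing $R = (\tau - 1)(\mathbb{L} - 1) S$ with $S \in \Kspr[[t]]$ and applying $\mathbb{D}$, which fixes each of $\tilde Q, \tilde P, Z_X^{\dim}$, and $(\tau - 1)(\mathbb{L} - 1)$, forces $(\tau - 1)(\mathbb{L} - 1)(S - \mathbb{D} S) = 0$, so $S$ is $\mathbb{D}$-fixed modulo the controlled torsion of Theorem \ref{Sym commute D torsion}. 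Specializing the resulting $\mathbb{D}$-symmetric identity under $\pi$ yields a polynomial relation for $\sum [\Sym^m X]_{\bir}\, t^m$ in $\mathbb{Z}[\Bir_k][[t]]$, which reduces the problem to the graded case.

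\textbf{Main obstacle.} The chief difficulty is the combinatorial-birational step of ruling out cancellations among the birational types $[Y \times \Sym^{m-i} X]_{\bir}$ as both $Y$ and $m - i$ vary. The regime $\kappa(X) = 0$ (e.g.\ Enriques, bielliptic, or K3 surfaces, and their higher-dimensional analogues) is the most delicate, since plurigenera stabilize and one must combine several finer invariants such as Hodge numbers computed via G\"ottsche-type formulas or the structure of the Iitaka fibration in order to detect $m$. A secondary technical obstacle in the ungraded argument is ensuring that the relevant $(\tau - 1)(\mathbb{L} - 1)$-torsion can be absorbed into the $T$-torsion permitted by Theorem \ref{Sym commute D torsion}, so that the reduction to the graded case is clean.
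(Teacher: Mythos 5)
The first reduction you make---pushing the recurrence to $\mathbb{Z}[\Bir_k]$ via the quotient by $(\tau)$---agrees with the paper's opening move, but beyond that the proposal diverges and does not give a viable argument, for two reasons.

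First, the key idea in the paper's proof is not present in yours. After landing in $\mathbb{Z}[\Bir_k]$, the paper composes with a monoid-ring-valued measure, $\mathbb{Z}[\Bir_k] \to \mathbb{Z}[\mathbb{N}]$ sending $[Z]$ to the formal monoid symbol $[h^0(Z,\omega_Z^{\otimes d})]$ (or to $\mathbb{Z}[M]$, using the vector $(h_d^1,\dots,h_d^n)$ when $\kappa(X)=0$), then uses the Hankel-determinant rationality criterion over the fraction field of the monoid ring. The point is precisely to \emph{not} evaluate these invariants as integers: the power series $\sum_m \binom{m+h-1}{h-1} t^m$ is of course rational in $\mathbb{Z}[[t]]$, but the power series with coefficients in the monoid ring is not. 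Your proposal specializes too soon. You suggest comparing plurigenera or Hodge numbers as numbers, and for $\kappa(X)=0$ you fall back on ``Hodge-theoretic invariants of a smooth model of $\Sym^m X$, computed via Macdonald/G\"ottsche-type generating functions.'' This cannot work: the very content of G\"ottsche's formula is that the Hodge numbers of $\Hilb^m X$ have a \emph{rational} generating function, so any numerical Hodge-theoretic measure will satisfy a linear recurrence. The paper's use of the free monoid structure of $\mathbb{Z}[M]$, together with the boundedness of $h_d^i(Z_m)$ (Lemma \ref{bounded}) and the rationality criterion (Lemma \ref{rationality criteria}), is exactly what sidesteps this, and you would need that idea to close the graded case.

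Second, your ungraded argument is both overcomplicated and contains unproved steps. The assertion that $(\tau-1)(\mathbb{L}-1)$ generates $\Kspr \cap (\tau-1)\Kdim$ as a $\Kspr$-module is stated without proof, and the conclusion that the $\mathbb{D}$-symmetrized identity ``reduces the problem to the graded case'' is left vague. The paper's route is much shorter: the only change needed is to replace the reduction modulo $\tau$ by the reduction modulo $\mathbb{L}$, landing in $\mathbb{Z}[\SB_k]$, and to note that the measure $\nu$ descends to $\SB_k$ since $\nu(\mathbb{P}^1)=1$. The conjecture (or even just $\mathbb{L}$-rational singularities) is used only to guarantee that $[\Sym^m X]$ maps to the class of a resolution $[Z_m]$ in $\mathbb{Z}[\SB_k]$; your detour through the decomposition $\Kspr \oplus \tau\Kspr$ is unnecessary and does not obviously terminate.
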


If $R$ is a ring, we say a power series $Z(t) \in R[[t]]$ is pointwise irrational if there is a ring homomorphism $\mu: R \rightarrow F$, where $F$ is a field, such that $\mu(Z(t)) \in F[[t]]$ is irrational.

We focus on the graded Kapranov zeta function $Z_X^{\dim}(t) \in K_0(\Var_k^{\dim})[[t]]$. We first give a simpler proof of Theorem \ref{irrational} when $\kappa(X) \geq 1$. Then we prove the general case by making use of Adams operations, and we discuss the adjustments for the ungraded version, assuming a positive answer to Question \ref{is sym D-sing?}.

Larsen and Lunts proved that the Kapranov zeta function $Z_X(t) \in K_0(\Var_k)[[t]]$ of a smooth projective surface of Kodaira dimension $\kappa(X) \geq 0$ is pointwise irrational in \cite{larsenluntsrationality}. We follow their line of argument, which extends to arbitrary dimension by crucially using the following result of Arapura-Archava \cite{arapurakodaira}.

\begin{thm}\label{plurigenera}
Let $X$ be a smooth projective variety of dimension $n>1$. If $Z_m$ is a resolution of $\Sym^m X$, then
\begin{align*}
    H^0(Z_m, \omega_{Z_m}^{\otimes n}) \cong \Sym^m H^0(X, \omega_X^{\otimes d}),
\end{align*}
whenever $nd$ is even.
\end{thm}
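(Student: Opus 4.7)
The plan is to compute the plurigenus $H^0(Z_m, \omega_{Z_m}^{\otimes d})$ (with the exponent on the left presumably $d$ rather than $n$) by descending along the quotient map $q: X^m \to \Sym^m X$, using that $\Sym^m X$ has sufficiently mild singularities when $n \geq 2$. The first step is to observe that, by the Chevalley--Shephard--Todd theorem, the $S_m$-action on $X^m$ contains no pseudo-reflections: the fixed locus of any non-identity $\sigma \in S_m$ (say a transposition, whose fixed locus is of largest dimension) has codimension at least $n \geq 2$. This is precisely the hypothesis that rules out codimension-one contributions to the discriminant of $q$, and together with standard arguments (e.g.\ Reid's criterion for canonical singularities of quotients by finite groups) it identifies $\Sym^m X$ as a variety with canonical singularities, provided the chosen pluricanonical form on $X^m$ descends to $\Sym^m X$, a point addressed in the next step.

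The second step is to pin down the $S_m$-action on $H^0(X^m, \omega_{X^m}^{\otimes d})$. Via the K\"unneth formula and $\omega_{X^m} = \omega_X \boxtimes \cdots \boxtimes \omega_X$, this group is naturally isomorphic to $H^0(X, \omega_X^{\otimes d})^{\otimes m}$. The $S_m$-action permutes the tensor factors, but since each copy of $\omega_X$ is the top exterior power on an $n$-dimensional factor, permuting the $m$ copies of $X$ introduces a sign of $\sgn(\sigma)^n$ on $\omega_{X^m}$ and hence of $\sgn(\sigma)^{nd}$ on $\omega_{X^m}^{\otimes d}$. Exactly when $nd$ is even, this sign is trivial, the action is the standard symmetric action, the corresponding pluricanonical form descends to a reflexive section on $\Sym^m X$, and one obtains
\[
H^0(X^m, \omega_{X^m}^{\otimes d})^{S_m} \;=\; \Sym^m H^0(X, \omega_X^{\otimes d}).
\]

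The third step is to show both of these are equal to $H^0(Z_m, \omega_{Z_m}^{\otimes d})$. On the one hand, since $q$ is \'etale in codimension one (the branch locus has codimension $\geq n \geq 2$), the reflexive sheaf $\omega_{\Sym^m X}^{[d]}$ agrees with $(q_* \omega_{X^m}^{\otimes d})^{S_m}$, giving
\[
H^0(\Sym^m X, \omega^{[d]}) \;=\; H^0(X^m, \omega_{X^m}^{\otimes d})^{S_m}.
\]
On the other hand, since $\Sym^m X$ has canonical singularities (with $nd$ even supplying the necessary descent), for any resolution $f: Z_m \to \Sym^m X$ we have $f_* \omega_{Z_m}^{\otimes d} = \omega_{\Sym^m X}^{[d]}$, which yields $H^0(Z_m, \omega_{Z_m}^{\otimes d}) = H^0(\Sym^m X, \omega^{[d]})$. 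Chaining the identifications gives the claim.

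The main obstacle is the singularity analysis in the third step: one must verify that pluricanonical sections on the smooth locus of $\Sym^m X$ extend across the (non-canonical) singular locus to any resolution. The condition $nd$ even enters twice --- to make the $S_m$-equivariant form descend, and to ensure that the quotient has canonical (and not merely log terminal or worse) singularities for the pair $(X^m, \omega^{\otimes d})$. The hypothesis $n > 1$ is essential precisely because when $n = 1$ transpositions become pseudo-reflections and the quotient is smooth but the descent of forms picks up extra ramification contributions.
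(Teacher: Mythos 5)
The paper does not prove this statement; it is cited directly to Arapura--Archava \cite{arapurakodaira}, so there is no ``paper proof'' to compare against. Your reconstruction is essentially the argument in that reference: $\Sym^m X$ has quotient singularities with no pseudo-reflections once $\dim X \geq 2$, the Reid--Tai age computation (a transposition has age $n/2 \geq 1$, and in general a permutation with $k$ cycles has age $n(m-k)/2$) shows the singularities are canonical, and then one matches invariant pluricanonical sections on $X^m$ with sections on a resolution via the descended reflexive sheaf. Your identification of the $\sgn(\sigma)^{nd}$ twist on $\omega_{X^m}^{\otimes d}$, and the resulting dichotomy $\Sym^m H^0$ versus $\wedge^m H^0$ according to the parity of $nd$, is exactly right, as is your observation that the exponent in the displayed formula should read $\otimes d$ rather than $\otimes n$.

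One small imprecision in your final paragraph: the hypothesis that $nd$ is even is \emph{not} needed for $\Sym^m X$ to have canonical singularities --- canonicity follows from $n \geq 2$ alone, via the age bound, independently of $d$. Moreover, the equality $f_* \omega_{Z_m}^{\otimes d} = \omega_{\Sym^m X}^{[d]}$ holds for every $d \geq 1$ once the singularities are canonical (one can reduce to the case where the Gorenstein index divides $d$ by raising a local section to the appropriate power and using normality of $Z_m$ to extract a root). The sole role of the parity condition is in Step 2: it makes the linearization of $\omega_{X^m}^{\otimes d}$ trivial on stabilizers, so that the $S_m$-invariants are $\Sym^m H^0(X, \omega_X^{\otimes d})$ rather than $\wedge^m H^0(X, \omega_X^{\otimes d})$. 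With this adjustment the proof is complete and correct.
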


\subsection{Proof when $\kappa(X) \geq 1$}
Choose $d \geq 1$ so that $h= h^0(X, \omega_X^{\otimes d}) \geq 2$. Replacing $d$ by $2d$ if necessary, we assume that $d$ is even.

Consider the map of monoids 
\begin{align*}
   \Bir_k \rightarrow \mathbb{N}, \hspace{.5cm} [Z] \mapsto h^0(Z, \omega_Z^{\otimes d}).
\end{align*}
By Theorem \ref{plurigenera}, this map commutes with $\lambda$-operations. Hence the corresponding map of group rings $\mathbb{Z}[\Bir_k] \rightarrow \mathbb{Z}[\mathbb{N}]$ is a $\lambda$-homomorphism. Consider now the composite $\lambda$-homomorphism
\begin{align*}
\mu = \mu_d: K_0(\Var_k^{\dim}) \rightarrow \mathbb{Z}[\Bir_k] \rightarrow \mathbb{Z}[\mathbb{N}].
\end{align*}
We show that
\begin{align*}
    \mu(Z_X^{\dim}(t)) = \sum_{m \geq 0} \bigg[ \binom{m+h-1}{h-1} \bigg] t^m.
\end{align*}
is not rational in $F[[t]]$, where $F$ is the fraction field of $\mathbb{Z}[\mathbb{N}]$.

We use the following Lemma.

\begin{lem}\label{a}[cf. \cite[Section $3$]{larsenluntsstable}]
If $F$ is a field, a power series $\sum_m a_m t^m$ is rational if and only if there exists $j>0$ and $j_0>0$ such that for each $m>j_0$, the determinant of the matrix
\[
\begin{bmatrix} 
a_{m} & \dots  & a_{m+j}\\
\vdots & \ddots & \vdots\\
a_{m+j} & \dots  & a_{m+2j} 
\end{bmatrix}
\]
is zero.
\end{lem}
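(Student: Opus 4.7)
The plan is to identify both directions of the equivalence with the existence of a linear recurrence on $(a_n)$, and then interpret the Hankel determinant condition through that lens. Recall (Kronecker) that $f(t) = \sum a_n t^n$ is rational with denominator of degree at most $j$ if and only if there exist constants $c_0, \ldots, c_j$, not all zero, and an index $N$ such that $\sum_{k=0}^{j} c_k a_{n+k} = 0$ for all $n \geq N$; a kernel relation for $H_m^{(j)}$ is essentially this same statement, packaged into matrix form with a shift in the starting index.

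For the forward direction, I would write $f = P/Q$ with $\deg Q = j$ and $Q(t) = q_0 + q_1 t + \cdots + q_j t^j$. Comparing coefficients in $Qf = P$ produces the recurrence $\sum_{k=0}^{j} q_{j-k} a_{m+k} = 0$ for all $m$ larger than $j_0 := \deg P - j$. This says the vector $(q_j, q_{j-1}, \ldots, q_0)^T$ is annihilated by every row of $H_m^{(j)}$ for $m > j_0$, so all these Hankel matrices are singular.

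For the reverse direction, I would let $j$ be \emph{minimal} with the property that $\det H_m^{(j)} = 0$ for all sufficiently large $m$. Minimality supplies arbitrarily large $m_0$ with $\det H_{m_0}^{(j-1)} \neq 0$; for any such $m_0$, the matrix $H_{m_0}^{(j)}$ has rank exactly $j$ and hence a one-dimensional kernel, spanned by a unique vector $c = (c_0, \ldots, c_{j-1}, 1)$. Reading off the $j+1$ equations $H_{m_0}^{(j)} c = 0$ gives the recurrence $\sum_{k=0}^{j} c_k a_{m_0+p+k} = 0$ for $p = 0, 1, \ldots, j$, i.e. on a window of consecutive starting indices. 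I would then propagate this recurrence to all larger starting indices by induction: assuming it up through some $n$, the singular matrix $H_{n-j+1}^{(j)}$ has its first $j$ rows killed by $c$ (by the inductive hypothesis), so if those first $j$ rows are linearly independent, then $c$ must span the one-dimensional kernel, and having the last row annihilated is precisely the recurrence at step $n+1$.

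The genuine obstacle is the non-vanishing of $\det H_m^{(j-1)}$ needed to keep the kernel one-dimensional throughout the induction; this can fail at individual values of $m$. The resolution, and the heart of the argument, is that the minimality of $j$ forces the set of indices at which $\det H_m^{(j-1)}$ does not vanish to be cofinal, so the propagation can be continued past any obstruction by sliding $m_0$ forward to the next such index and checking that the newly produced kernel vector agrees with $c$ on the overlap (again by the uniqueness coming from $\det H_m^{(j-1)} \neq 0$). Once the recurrence is established for all $n$ beyond some threshold, $f(t)$ is visibly of the form $P(t)/(1 + c_{j-1} t + \cdots + c_0 t^j)$ with $P$ a polynomial, yielding rationality.
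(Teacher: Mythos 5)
The paper does not supply its own proof of this lemma; it only cites Larsen--Lunts, so I am assessing your argument on its own merits.

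Your forward direction is fine: writing $f = P/Q$ with $\deg Q \leq j$, the coefficient comparison in $Qf = P$ yields a single linear recurrence of length $j+1$ valid from some index on, and the fixed vector of recurrence coefficients lies in the kernel of every $H_m^{(j)}$ with $m$ large.

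The converse has a genuine gap, at precisely the point you flag. Taking $j$ minimal, the set $S = \{m : \det H_m^{(j-1)} \neq 0\}$ is indeed cofinal. But cofinality is not enough to run the ``slide and match on overlap'' propagation. To conclude that the normalized kernel vector $c^{(m)}$ of $H_m^{(j)}$ coincides with $c^{(m')}$, you need both recurrences to hold on $j$ consecutive starting indices, and the $j \times j$ Hankel determinant at the first of those indices to be nonzero; only then is the difference of the two vectors, whose last coordinate vanishes, forced to be zero. The recurrence attached to $c^{(m)}$ is a priori guaranteed only on the window of starting indices $[m, m+j]$, and that attached to $c^{(m')}$ only on $[m', m'+j]$, so if $m' \geq m + 2$ the common window has at most $j-1$ indices and the uniqueness argument does not engage. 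Your proposal gives no reason why $S$ cannot contain arbitrarily long gaps, and if it does, the induction cannot cross them as written.

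The missing ingredient is the Desnanot--Jacobi (Dodgson condensation) identity applied to the Hankel matrix. Writing $D_m^{(k)} = \det H_m^{(k)}$, it reads
\[
D_m^{(k)} \, D_{m+2}^{(k-2)} \;=\; D_m^{(k-1)} \, D_{m+2}^{(k-1)} \;-\; \bigl(D_{m+1}^{(k-1)}\bigr)^2.
\]
For $k = j$ and $m > j_0$ the left-hand side is zero, so $D_m^{(j-1)} D_{m+2}^{(j-1)} = \bigl(D_{m+1}^{(j-1)}\bigr)^2$. If $D_{m_1}^{(j-1)} = 0$ for some $m_1 > j_0$, then setting $m = m_1$ forces $\bigl(D_{m_1+1}^{(j-1)}\bigr)^2 = 0$, hence $D_{m_1+1}^{(j-1)} = 0$, and by induction $D_m^{(j-1)} = 0$ for all $m \geq m_1$, contradicting the minimality of $j$. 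Therefore $D_m^{(j-1)} \neq 0$ for \emph{all} $m > j_0$, not merely cofinally. With that strengthening, the kernel vectors $c^{(m)}$ are defined and one-dimensional for every $m > j_0$, and $c^{(m)} = c^{(m+1)}$ because their recurrence windows overlap in the full $j$-index interval $[m+1, m+j]$, which starts at the nonsingular $H_{m+1}^{(j-1)}$; the recurrence then propagates to all large indices. You should insert this identity, or an equivalent device forcing $S$ to be cofinite rather than merely cofinal, to complete the converse.
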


Spelling this out, we need to show that for each $j>0$, there exists $m$ sufficiently large so that

\begin{align*}
\sum_{\sigma \in S_{j+1}} \sgn(\sigma) \bigg[  \prod_{i=1}^{j+1} \binom{m+\sigma(i)+i+h-3}{h-1} \bigg] \neq 0.
\end{align*}

Each of the summands on the left hand side, considered as a polynomial in $m$, determines the multiset $\{ \sigma(i) + i \}$. The multiset corresponding to the identity permutation is clearly unique, so there cannot be any cancellation in the monoid ring $\mathbb{Z}[\mathbb{N}]$ for all $m$. The result follows.

Note that this proof uses the group ring $\mathbb{Z}[\mathbb{N}]$ crucially. The power series
\begin{align*}
    \sum_{m \geq 0} \binom{m+h-1}{h-1} t^m = \frac{1}{(1-t)^h}
\end{align*}
is certainly rational in $\mathbb{Z}[[t]]$.

Moreover, this proof fails completely in the ungraded situation, as $K_0(\Var_k)$ only admits $\mathbb{Z}[\SB_k]$ as a quotient, and plurigenera are not stably birational invariants.

\subsection{Proof when $\kappa(X) \geq 0$}
To attack the general case of Theorem \ref{irrational}, we will need to keep track of more information. We use the Adams operations on the Grothendieck group $\overline{K}(X)$ as in Example \ref{K bar}, together with the global sections map
\begin{align*}
    h^0: \overline{K}(X) \rightarrow \mathbb{Z}.
\end{align*}

Fix $d \geq 1$ and denote $h_d^i(Z) = h^0(\Psi^d [\Omega_Z^{i}])$ if $Z$ is a smooth projective variety.

Let $M \subset \mathbb{Z}[s]$ be the monoid of polynomials with constant term one under multiplication, and consider the map $\nu = \nu_d$ which assigns to a smooth projective variety $Z$, the element
\begin{align*}
    \nu(Z) = 1+h_d^1(Z)s + \ldots +h_d^n(Z)s^d \in M,
\end{align*}
where $n = \dim Z$. Note that $\Psi^d \omega_Z = \omega_Z^{\otimes d}$, so the highest degree coefficient of $\nu(Z)$ is the plurigenus $h^0(Z, \omega_Z^{\otimes d})$.

The following is proved in \cite[Proposition $6.1$]{larsenluntsrationality}.

\begin{lem}\label{mu defined}
The map $\nu$ satisfies the following properties:
\begin{enumerate}
    \item $\nu(Z) = \nu(Z')$ if $Z$ and $Z'$ are birational,
    \item $\nu(Z \times Z') = \nu(Z) \nu(Z')$, 
    \item $\nu(\mathbb{P}^1)=1$.
\end{enumerate}
\end{lem}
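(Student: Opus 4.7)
The plan is to handle the three properties in reverse order of difficulty, saving birational invariance for last.

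Property (3) is immediate: $\Omega^1_{\mathbb{P}^1} \cong \mathcal{O}_{\mathbb{P}^1}(-2)$ is a line bundle, and Adams operations in the $\lambda$-ring $\overline{K}$ act on a line bundle by $\Psi^d[L] = [L^{\otimes d}]$ (a direct application of Newton's identity using $\Sym^i L = L^{\otimes i}$). Hence $\Psi^d[\Omega^1_{\mathbb{P}^1}] = [\mathcal{O}(-2d)]$ has $h^0 = 0$ for $d \geq 1$. Since $\Omega^i_{\mathbb{P}^1} = 0$ for $i \geq 2$, we conclude $\nu(\mathbb{P}^1) = 1$.

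Property (2) follows from the K\"unneth decomposition $\Omega^i_{Z \times Z'} \cong \bigoplus_{p+q=i} \pi_1^* \Omega^p_Z \otimes \pi_2^* \Omega^q_{Z'}$. Adams operations are ring homomorphisms on $\overline{K}$ and commute with pullback (as each $\Sym^k$ does), so
\[
\Psi^d[\Omega^i_{Z \times Z'}] \;=\; \sum_{p+q=i} \pi_1^* \Psi^d[\Omega^p_Z] \cdot \pi_2^* \Psi^d[\Omega^q_{Z'}] \;\in\; \overline{K}(Z \times Z').
\]
Applying $h^0$ and combining additivity on $\overline{K}$ with the K\"unneth formula $h^0(\pi_1^* F \otimes \pi_2^* G) = h^0(F) \cdot h^0(G)$ for sheaves pulled back from the factors yields $h^i_d(Z \times Z') = \sum_{p+q=i} h^p_d(Z) \cdot h^q_d(Z')$, which packages into $\nu(Z \times Z') = \nu(Z) \nu(Z')$.

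For property (1), apply the weak factorization theorem \cite{weakfactorization} to reduce to verifying $\nu(\tilde{Z}) = \nu(Z)$ when $\pi: \tilde{Z} = \Bl_Y Z \to Z$ is a single blow-up along a smooth connected center $Y$. Expand $\Psi^d[\Omega^i_Z]$ via Newton's identity as a universal $\mathbb{Z}$-linear combination of classes $[T_\alpha(\Omega^i_Z)]$, where each $T_\alpha$ is a specific iterated tensor-and-symmetric-power construction. By additivity of $h^0$ on $\overline{K}$, it suffices to show $h^0(Z, T_\alpha(\Omega^i_Z))$ is a birational invariant of smooth projective varieties for each $T_\alpha$. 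This is a classical reflexivity / Hartogs argument: $\pi_* T_\alpha(\Omega^i_{\tilde{Z}})$ is a coherent sheaf on $Z$ agreeing with $T_\alpha(\Omega^i_Z)$ outside the codimension-$\geq 2$ image of the exceptional divisor; both sheaves are reflexive on the smooth variety $Z$, hence coincide, forcing equality of their spaces of global sections.

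The main obstacle is precisely the reflexivity step in (1): carefully verifying that $\pi_* T_\alpha(\Omega^i_{\tilde{Z}}) = T_\alpha(\Omega^i_Z)$ rather than merely agreeing on the isomorphism locus. This is classical but must be stated uniformly across the iterated tensor and symmetric constructions produced by Newton's identities; it extends the surface-specific argument of \cite{larsenluntsrationality} to arbitrary dimension.
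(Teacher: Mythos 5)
The paper offers no proof of this lemma; it simply cites Larsen--Lunts \cite[Proposition~6.1]{larsenluntsrationality}. Your blind proof reconstructs what that proof must look like, and the overall strategy (Hartogs/reflexivity for birational invariance, K\"unneth for multiplicativity, a direct computation for $\mathbb{P}^1$) is the right one. Two points deserve more care, though.

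First, in step~(2) you assert that ``Adams operations are ring homomorphisms on $\overline{K}$.'' This is not automatic here. The $\lambda$-structure on $\overline{K}(X)$ is $\lambda^i = \Sym^i$, and $\overline{K}(X)$ distinguishes a bundle from the direct sum of its graded pieces, so there is no splitting principle at hand and $\overline{K}(X)$ is not obviously a \emph{special} $\lambda$-ring (for which multiplicativity of $\Psi^d$ is the standard consequence). The correct justification is representation-theoretic: the identity $\Psi^d(xy) = \Psi^d(x)\Psi^d(y)$, when expanded via Newton's identities and the Cauchy formula $\Sym^k(V\otimes W) = \bigoplus_{|\lambda|=k} S_\lambda V \otimes S_\lambda W$, becomes a universal identity of polynomial $GL_m \times GL_n$-representations in characteristic zero; by semisimplicity, any such identity in $R(GL_m\times GL_n)$ is realized as an actual direct-sum decomposition of vector bundles, hence holds in $\overline{K}$. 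You need to say this (or reduce to what Larsen--Lunts actually prove), since $\Psi^d$ being additive is free from Newton but multiplicativity is not.

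Second, in step~(1) the claim that $\pi_* T_\alpha(\Omega^i_{\tilde Z})$ is reflexive is both unnecessary and delicate (pushforward under a blow-up does not in general preserve reflexivity). The cleaner route, which gives exactly what you want, is to compare global sections directly: restriction gives an injection $H^0(\tilde Z, T_\alpha(\Omega^i_{\tilde Z})) \hookrightarrow H^0(U, \cdot)$ where $U = \tilde Z \setminus E \cong Z \setminus \pi(E)$; Hartogs extension across the codimension-$\geq 2$ locus $\pi(E)$ identifies $H^0(U,\cdot)$ with $H^0(Z, T_\alpha(\Omega^i_Z))$; and the functorial pullback $\pi^* T_\alpha(\Omega^i_Z) \to T_\alpha(\Omega^i_{\tilde Z})$ provides an inverse. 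Since both composites are the identity on the dense open $U$ and the sheaves are locally free, the maps are mutually inverse isomorphisms. With these two repairs your argument is a faithful reconstruction of what the cited reference does.
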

In particular, we can view $\nu$ as a map of monoids $\Bir_k \rightarrow M$.

Consider now the composite ring homomorphism
\begin{align*}
    \mu = \mu_d: \Kdim \rightarrow \mathbb{Z}[\Bir_k] \rightarrow \mathbb{Z}[M].
\end{align*}

\begin{rmk}
The monoid ring $\mathbb{Z}[M]$ admits a natural $\lambda$-structure, but $\mu$ is not a $\lambda$-homomorphism for $n > 1$.
\end{rmk}

Now let's prove Theorem \ref{irrational}. As before, choose an even integer $d$ such that $h=h^0(X, \omega_X^{\otimes d}) \geq 1$.

Note that $M$ is a free abelian monoid, so $\mathbb{Z}[M]$ is an integral domain with fraction field $F$. We show that the image $f(t)$ of
\begin{align*}
    \sum_{m \geq 0} \mu([\Sym^m X]) t^m = \sum_{m \geq 0} \mu([Z_m]) t^m
\end{align*}
in $F[[t]]$ is irrational, where $Z_m$ is a resolution of $\Sym^m X$.

We now require a few lemmas from \cite{larsenluntsrationality}. The first is proved only for the resolution $\Hilb^m X \rightarrow \Sym^m X$ when $X$ is a surface, but the argument goes through in general.

\begin{lem}\label{bounded}[cf. \cite[Proposition $7.5$]{larsenluntsrationality}]
Fix $d \geq 1$ and $i \geq 1$. Then $h_d^i(Z_m) = h^0(\Psi^d \Omega_{Z_m}^i)$ is bounded, independent of $m$.
\end{lem}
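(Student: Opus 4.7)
The plan is to reduce the computation of $h^0(Z_m, \Psi^d[\Omega_{Z_m}^i])$ to one on $X^m$ via the quotient presentation $\Sym^m X = X^m/S_m$, and then expand by K\"unneth to exhibit a formula in $m$ that stabilizes as $m \to \infty$.

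First I would observe that $\Psi^d$ is an integer polynomial in the $\lambda$-operations, so $\Psi^d[\Omega_{Z_m}^i]$ is a $\mathbb{Z}$-linear combination of Schur functors applied to $\Omega_{Z_m}^i$. Since $\Sym^m X = X^m/S_m$ has only quotient (hence klt) singularities and the quotient map $\pi \colon X^m \to \Sym^m X$ is \'etale outside the big diagonal (of codimension $\dim X \geq 2$), the Greb--Kebekus--Kov\'acs--Peternell extension theorem for reflexive differentials on klt varieties, applied to each Schur summand, yields
\[
    h^0\bigl(Z_m, \Psi^d[\Omega_{Z_m}^i]\bigr) \;=\; h^0\bigl(X^m, \Psi^d[\Omega_{X^m}^i]\bigr)^{S_m}
\]
for any resolution $Z_m \to \Sym^m X$. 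In particular this value depends only on the birational type of $\Sym^m X$, not on the choice of resolution.

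Next I would compute the right-hand side. Using the K\"unneth decomposition $\Omega_{X^m}^i = \bigoplus_{i_1 + \cdots + i_m = i} \boxtimes_j p_j^* \Omega_X^{i_j}$, together with additivity of $\Psi^d$ on $\oplus$, multiplicativity on $\otimes$, and K\"unneth for $H^0$, one expresses $h^0(X^m, \Psi^d[\Omega_{X^m}^i])^{S_m}$ as a sum, indexed by multiplicity vectors $(a_k)_{k \geq 0}$ with $\sum_k a_k = m$ and $\sum_k k a_k = i$, of terms depending only on the integers $h^0_{d,k}(X) := h^0(X, \Psi^d[\Omega_X^k])$. Since $\Psi^d[\mathcal O_X] = [\mathcal O_X]$ forces $h^0_{d,0}(X) = 1$, the $k = 0$ slot contributes trivially and $a_0 = m - \sum_{k \geq 1} a_k$ enters only through the constraint. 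For $k \geq 1$ the equation $\sum k a_k = i$ bounds $\sum a_k \leq i$; hence once $m \geq i$ the sum collapses to an $m$-independent expression indexed by partitions $\lambda \vdash i$, and for $m < i$ it is just a truncation of that expression. Either way $h_d^i(Z_m)$ is bounded uniformly in $m$.

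The main obstacle will be rigorously justifying the first step---that $h^0$ of each Schur functor of the cotangent bundle on a resolution of $\Sym^m X$ returns $S_m$-invariant sections of the corresponding Schur functor on $X^m$. Larsen--Lunts sidestep this in the surface case by working on the Hilbert--Chow resolution, where the exceptional geometry and Schur functors of $\Omega_{\Hilb^m X}$ are directly accessible. In higher dimension the cleanest tool is the GKKP-style extension theorem for tensor differentials on klt or quotient varieties; alternatively one can argue via birational invariance of $h^0$ of Schur functors of $\Omega$ on smooth projective varieties, combined with a direct K\"unneth computation on a convenient birational model dominated by an $S_m$-equivariant resolution of $X^m$.
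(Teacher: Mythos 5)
The paper itself offers no proof of this lemma: it cites Larsen--Lunts, Proposition 7.5 (proved there for $\Hilb^m$ of a surface) and asserts that the argument goes through in all dimensions. So there is no argument in the paper to compare against directly. Your overall strategy---pass to $X^m$, expand $\Omega^i_{X^m}$ by K\"unneth into Schur pieces, and observe that for fixed $i$ the $S_m$-invariants stabilize once $m$ exceeds the weight---is sound and is surely what the authors have in mind. But the way you reduce to $X^m$ has a real weak point, and there is a cleaner route that avoids it.

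You appeal to a GKKP-type \emph{equality}
\[
    h^0\bigl(Z_m, S^\lambda\Omega_{Z_m}\bigr) = h^0\bigl(X^m, S^\lambda\Omega_{X^m}\bigr)^{S_m}
\]
for all Schur functors $S^\lambda$. The published GKKP extension theorem is only for wedge powers $\Omega^p$; the extension property for symmetric and general tensor differentials is substantially more delicate and is known to fail for canonical (hence klt) singularities in general. Whether it holds for the particular quotient singularities of $\Sym^m X$ is not something you can take off the shelf, and citing ``GKKP-style extension for tensor differentials on klt varieties'' as the main tool is an overreach that a referee would flag.

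Fortunately the equality is not needed. Take any resolution $Z_m \to \Sym^m X$ that is an isomorphism over the regular locus (and note $h^0(S^\lambda\Omega)$ is a birational invariant of smooth projective varieties, so the choice of $Z_m$ is immaterial). Restriction to the dense open regular locus $U \subset \Sym^m X$, which is an $S_m$-free \'etale quotient of $V = X^m \setminus \Delta$ with $\codim \Delta = \dim X \geq 2$, gives an \emph{injection}
\[
    H^0\bigl(Z_m, S^\lambda\Omega_{Z_m}\bigr) \hookrightarrow H^0\bigl(V, S^\lambda\Omega_{X^m}\bigr)^{S_m} = H^0\bigl(X^m, S^\lambda\Omega_{X^m}\bigr)^{S_m},
\]
the last step by Hartogs since $S^\lambda\Omega_{X^m}$ is locally free and $X^m$ is smooth. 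Now $\Psi^d[\Omega^i_{Z_m}]$ is a fixed $\mathbb{Z}$-linear combination $\sum_\lambda c_\lambda [S^\lambda\Omega_{Z_m}]$ with finitely many $\lambda$ depending only on $d$ and $i$; since each $h^0(S^\lambda\Omega_{Z_m})$ is a non-negative integer bounded above by the $m$-stable quantity $\dim H^0(X^m, S^\lambda\Omega_{X^m})^{S_m}$ (which your K\"unneth count correctly shows is bounded in $m$), the full combination $h^0(\Psi^d[\Omega^i_{Z_m}]) = \sum_\lambda c_\lambda h^0(S^\lambda\Omega_{Z_m})$ is bounded by $\sum_\lambda |c_\lambda|\, C_\lambda$. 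This uses only the elementary inequality and gives the lemma; the stabilization-to-equality you were aiming for is more than the statement requires and more than the available extension theorems deliver.
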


\begin{lem}\label{rationality criteria}[cf. \cite[Proposition $2.6$]{larsenluntsrationality}]
Let $F$ be the field of fractions of the group ring $\mathbb{Z}[G]$ of a free abelian group $G$. Let $f(t) = \sum_{i \geq 0} g_i t^i \in F[[t]]$ be a power series such that, for each $i$, either $g_i \in G$ or $g_i=0$. Then $f(t)$ is rational if and only if there exists $d \geq 1$ and $i_0 \geq 1$, and $\{ h_i \in G \}_{i \in \mathbb{N}}$ periodic with period $d$, such that 
\begin{align*}
    g_{i+d} = h_i g_i
\end{align*}
for all $i \geq i_0$.
\end{lem}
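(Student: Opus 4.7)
The plan is first to dispense with the easy implication: if $g_{i+d} = h_i g_i$ for $i \geq i_0$ with $\{h_i\}$ periodic of period $d$, I would split the tail $\sum_{i \geq i_0} g_i t^i$ into $d$ subseries indexed by residue class modulo $d$. On the $r$th class, iterating the recursion gives $g_{i_r + kd} = h_r^k g_{i_r}$, where $i_r$ is the minimal representative $\geq i_0$ in that class and $h_r \in G$ is the common value of $h_i$ there; the subseries telescopes to $g_{i_r} t^{i_r}/(1-h_r t^d) \in F(t)$. Summing over $r$ and adding the polynomial head $\sum_{i<i_0} g_i t^i$ gives rationality.

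For the converse, I would assume $f(t) \in F[[t]]$ is rational and invoke Lemma \ref{a} to extract a linear recursion
\[
g_{i+n} = q_1 g_{i+n-1} + \cdots + q_n g_i, \qquad i \geq i_1,
\]
with fixed $q_j \in F$. Clearing a common denominator $D \in \mathbb{Z}[G] \setminus \{0\}$ converts this into an integral recursion $A_0 g_{i+n} + \cdots + A_n g_i = 0$ inside the group ring $\mathbb{Z}[G]$, with $A_j \in \mathbb{Z}[G]$ independent of $i$ and $A_0 \neq 0$. The finite set $S = \bigcup_j \operatorname{supp} A_j \subset G$ then bounds the complexity of the relation: it is a $\mathbb{Z}$-linear combination of monomials of the form $\alpha \cdot g_{i+j}$ with $\alpha \in S$, summing to zero in $\mathbb{Z}[G]$.

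The core of the argument is combinatorial. Because monomials in $G$ are $\mathbb{Z}$-linearly independent in $\mathbb{Z}[G]$, the terms $\alpha g_{i+j}$ must partition into ``cancellation classes'' of coincident monomials whose integer coefficients sum to zero. Each coincidence $\alpha g_{i+j} = \beta g_{i+k}$ with $g_{i+j}, g_{i+k} \neq 0$ records a multiplicative identity $g_{i+k} g_{i+j}^{-1} = \alpha \beta^{-1} \in G$. The ``cancellation pattern'' at index $i$, together with the zero-locus pattern of $(g_{i+j})_{j=0}^n$, is finite combinatorial data depending only on $S$ and $n$. By pigeonhole, two indices $i_0' < i_0''$ within any sufficiently long window produce the same pattern; setting $d = i_0'' - i_0'$ and propagating the data yields $g_{i+d} = h_i g_i$ for $i \geq i_0'$, with $h_i \in G$ depending only on $i \bmod d$.

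The main obstacle is making the pigeonhole step rigorous and globally propagating: one must verify that the pattern recurring at the two witness indices $i_0', i_0''$ actually forces the \emph{same} multiplicative relation $g_{i+d}/g_i = h_{i \bmod d}$ to hold at every subsequent index, not merely at the two witnessed ones. Care is also needed in residue classes where $g_i$ vanishes for all sufficiently large $i$; there one may set $h_i$ to be any fixed element of $G$ without affecting the identity $g_{i+d} = h_i g_i = 0$. Handling these bookkeeping details --- possibly more cleanly by considering the finite-dimensional $F$-subspace of $F$ spanned by the tail $(g_i)_{i \geq i_1}$ and extracting the multiplicative structure from a monomial basis of that span --- is where the bulk of the technical work lies.
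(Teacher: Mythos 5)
The paper does not prove this lemma; it is quoted from Larsen--Lunts \cite[Proposition $2.6$]{larsenluntsrationality}, so there is no internal argument to compare against---you are attempting a proof from scratch, and it should be judged on its own merits.

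Your easy direction is correct: splitting the tail into residue classes modulo $d$ and summing the resulting geometric series is exactly right.

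The hard direction has a genuine gap, which you partly acknowledge but which is more serious than ``bookkeeping.'' After clearing denominators to get $A_0 g_{i+n} + \cdots + A_n g_i = 0$ with $A_j \in \mathbb{Z}[G]$, the fact that $G$ is a $\mathbb{Z}$-basis of $\mathbb{Z}[G]$ forces coincidences $\alpha g_{i+n-j} = \beta g_{i+n-k}$ and hence ratios $g_{i+n-j}/g_{i+n-k} \in S\cdot S^{-1}$ for \emph{some} pairs $(j,k)$---but not for all. The ``cancellation graph'' on $\{0,\dots,n\}$ at step $i$ (edges recording which translates coincide) need not be connected. Between different components it imposes no constraint at all on the ratios $g_{i+j}/g_{i+k}$. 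As a concrete illustration, take $A_0 = 1 - x$, $A_1 = 0$, $A_2 = x - 1$ in $\mathbb{Z}[x^{\pm1}]$: the recursion $A_0 g_{i+2} + A_2 g_i = 0$ says only $g_{i+2} = g_i$, and says nothing about $g_{i+1}/g_i$. Consequently, the ``pattern'' you pigeonhole on is not a sufficient state: two indices $i_0' < i_0''$ with the same pattern can have completely different ratios $g_{i_0'+j}/g_{i_0'+k}$ versus $g_{i_0''+j}/g_{i_0''+k}$ in the unconnected parts, so there is no mechanism by which the identity $g_{i+d} = h_i g_i$ propagates from the two witness indices to all $i \geq i_0'$.

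Your proposed fix---``considering the finite-dimensional $F$-subspace of $F$ spanned by the tail $(g_i)$''---does not make sense as stated. The $g_i$ are scalars, i.e.\ elements of the field $F$ itself, so their $F$-span is at most one-dimensional and carries no information. (The $\mathbb{Z}$-span of the $g_i$ inside $\mathbb{Z}[G]$ \emph{does} carry information, but it is typically infinite rank, so this gives no pigeonhole either.) Closing the gap requires a more structural use of the freeness of $G$ and the factorization theory of $\mathbb{Z}[G]$ than a single pigeonhole on cancellation patterns; this is precisely the content of the Larsen--Lunts argument that the paper cites, and your sketch does not yet reproduce it.
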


Suppose that $f(t)$ is rational in $F[[t]]$. Apply Lemma \ref{rationality criteria} with $G$ the group completion of $M$. In particular, there exists $h \in G$ and integers $d \geq 1$ and $i_0 \geq 1$ such that
\begin{align*}
    \mu ([Z_{i_0+\ell d}]) = h^\ell \cdot \mu ([Z_{i_0} ])
\end{align*}
for all $\ell \geq 0$.

Note that each $\mu ([Z_m])$ is a polynomial with positive leading term $h^0(Z_m, \omega_{Z_m}^{\otimes d})$. Hence $h$ is a nonconstant polynomial in $s$. But then, looking at the powers of $h$, we see the coefficients of fixed powers of $s$ in the $\mu ([Z_m] )$ cannot all stay bounded, contradicting Lemma \ref{bounded}. This completes the proof of Theorem \ref{irrational} in the graded setting.

The only change we make when working with the ungraded Grothendieck ring $K_0(\Var_k)$ is to use the reduction $r: K_0(\Var_k) \rightarrow \mathbb{Z}[\SB_k]$ modulo $\mathbb{L}$, as opposed to modulo $\tau$. We then define $\mu: K_0(\Var_k) \rightarrow \mathbb{Z}[M]$ by composing $r$ with the morphism $\mathbb{Z}[\SB_k] \rightarrow \mathbb{Z}[M]$, which is well-defined by Lemma \ref{mu defined}. 

However, it is now unclear whether the class $r([\Sym^m X]) \in \mathbb{Z}[\SB_k]$ reflects the stable birational type of $\Sym^m X$, i.e. whether $r([\Sym^m X]) = [Z_m] \in \mathbb{Z}[\SB_k]$. This is not true for a resolution of a general singular variety. We can guarantee it only if we assume that $\Sym^m X$ has $\mathbb{D}$-singularities or, more weakly, that $\Sym^m X$ has $\mathbb{L}$-rational singularities. Shein \cite{shein} recently showed that $\Sym^m X$ does have $\mathbb{L}$-rational singularities and so proved Theorem \ref{irrational} unconditionally in the ungraded setting.

\end{document}